\renewcommand{\labelenumi}{$\mathrm{(\roman{enumi})}$}
\renewcommand{\labelenumii}{$\mathrm{(\alph{enumii})}$}
\title{Persistence-like distance on Tamarkin's category \\ and symplectic displacement energy}
\author{Tomohiro Asano \and Yuichi Ike}
\date{\today}
		\noindent \textit{E-mail address}: \texttt{tasano@ms.u-tokyo.ac.jp}
\begin{document}
\maketitle

\begin{abstract}
	We introduce a persistence-like pseudo-distance on Tamarkin's category and prove that the distance between an object and its Hamiltonian deformation is at most the Hofer norm of the Hamiltonian function.
	Using the distance, we show a quantitative version of Tamarkin's non-displaceability theorem, which gives a lower bound of the displacement energy of compact subsets of cotangent bundles. 
\end{abstract}

\maketitle

\tableofcontents

\section{Introduction}

In this paper, we introduce a pseudo-distance on Tamarkin's category, inspired by the recent work by Kashiwara--Schapira~\cite{KS18persistent} on the sheaf-theoretic interpretation of the interleaving distance for persistence modules.
We also propose a new sheaf-theoretic method to estimate the displacement energy of compact subsets of cotangent bundles, which is a quantitative generalization of Tamarkin's non-displaceability theorem.
We will recall the notion of displacement energy in \cref{subsec:introdispenergy} and then state our results in \cref{subsec:mainresults}.

\subsection{Displacement energy}\label{subsec:introdispenergy}

For a given compact subset of a symplectic manifold, its displacement energy measures the minimal energy of Hamiltonian isotopies which displace the subset.
In this paper, we consider the displacement energy in the case the symplectic manifold is a cotangent bundle.
Let $M$ be a connected manifold and $I$ be an open interval containing $[0,1]$.
We denote by $T^*M$ the cotangent bundle equipped with the canonical exact symplectic structure.
A compactly supported $C^\infty$-function $H=(H_s)_{s \in I} \colon T^*M \times I \to \bR$ defines a time-dependent Hamiltonian vector field $X_H=(X_{H_s})_s$ on $T^*M$.
By the compactness of the support, $X_H$ generates a Hamiltonian isotopy $\phi^H=(\phi^H_s)_s \colon T^*M \times I \to T^*M$.
Following Hofer~\cite{Hofer90}, for a compactly supported function $H \colon T^*M \times I \to \bR$, we define
\begin{equation}
\| H \|
:=
\int_0^1 \left(\max_p H_s(p) - \min_p H_s(p) \right) ds.
\end{equation}
For compact subsets $A$ and $B$ of $T^*M$, we define their \emph{displacement energy} $e(A,B)$ by
\begin{equation}
e(A,B)
:=
\inf 
\left\{ 
\| H \| 
\;\middle|\; 
\begin{aligned}
&  \text{$H \colon T^*M \times I \to \bR$ with compact support}, \\
& A \cap \phi_1^H(B) = \emptyset
\end{aligned}
\right\}.
\end{equation}
Here $\phi_1^H$ denotes the time-one map of the Hamiltonian isotopy $\phi^H$.
Note that if $e(A,B)=+\infty$, then $A \cap \phi^H_1(B) \neq \emptyset$ for any compactly supported function $H$.
The aim of this paper is to give a lower bound of $e(A,B)$ in terms of the microlocal sheaf theory due to Kashiwara and Schapira~\cite{KS90}.

\subsection{Main results}\label{subsec:mainresults}

We shall estimate the displacement energy by introducing a pseudo-distance on Tamarkin's category $\cD(M)$.
In order to state our results, we prepare some notions.
In the sequel, let $\bfk$ be a field.
Moreover, let $X$ be a $C^\infty$-manifold.
We denote by $\Db(X)$ the bounded derived category of sheaves of $\bfk$-vector spaces.
For an object $F \in \Db(X)$, its microsupport $\MS(F)$ is defined as the set of directions in which the cohomology of $F$ cannot be extended isomorphically.
The microsupport is a closed subset of the cotangent bundle $T^*X$ and invariant under the action of $\bR_{>0}$ on $T^*X$.

In \cite{Tamarkin}, Tamarkin introduced a category $\cD(M)$ and used it to prove the non-displaceability of particular compact subsets.
The category $\cD(M)$ is defined as a quotient category of $\Db(M \times \bR)$.
For a compact subset $A$ of $T^*M$, $\cD_A(M)$ denotes the full subcategory of $\cD(M)$ consisting of objects whose microsupports are contained in the cone of $A$ in $T^*(M \times \bR)$.
For an object $F \in \cD(M)$ and $c \in \bR_{\ge 0}$ there is a canonical morphism $\tau_{0,c}(F) \colon F \to {T_c}_*F$, where $T_c \colon M \times \bR \to M \times \bR, (x,t) \mapsto (x,t+c)$.
See \cref{sec:Tamarkin} for more details.

First, using the $\bR$-direction of $M \times \bR$, we introduce the following pseudo-distance $d_{\cD(M)}$ on Tamarkin's category $\cD(M)$, which is similar to the interleaving distance for persistence modules (see \cite{CCSGGO,CdSGO16}).
Our definition is inspired by the pseudo-distances on the derived categories of sheaves on vector spaces recently introduced by Kashiwara--Schapira~\cite{KS18persistent}.
See also \cref{rem:distrelation} for their relation.
\pagebreak

\begin{definition}\label{def:introdefab}
	\
	\begin{enumerate}
		\item Let $F,G \in \cD(M)$ and $a,b \in \bR_{\ge 0}$.
		Then the pair $(F,G)$ is said to be \emph{$(a,b)$-interleaved}  
		if there exist morphisms $\alpha, \delta \colon F \to {T_a}_*G$ 
		and $\beta, \gamma : G \to {T_b}_*F$ such that
		\begin{enumerate}
			\renewcommand{\labelenumii}{$\mathrm{(\arabic{enumii})}$}
			\item $F \xrightarrow{\alpha} {T_a}_* G \xrightarrow{{T_a}_*\beta} {T_{a+b}}_*F$ is equal to $\tau_{0,a+b}(F) \colon F \to {T_{a+b}}_*F$,
			\item $G \xrightarrow{\gamma} {T_b}_* F \xrightarrow{{T_b}_*\delta} {T_{a+b}}_*G$ is equal to $\tau_{0,a+b}(G) \colon G \to {T_{a+b}}_*G$.
		\end{enumerate}
		\item For objects $F,G \in \cD(M)$, one defines 
		\begin{equation}
        	d_{\cD(M)}(F,G)
        	:=
        	\inf 
        		\left\{	a+b \in \bR_{\ge 0} 
        	\; \middle| \;
        	\begin{aligned}
        	    & a,b \in \bR_{\ge 0}, \\
            	& \text{$(F,G)$ is $(a,b)$-interleaved}
        	\end{aligned}
        	\right\},
    	\end{equation}
		and calls $d_{\cD(M)}$ the \emph{translation distance}.
	\end{enumerate}		 
\end{definition}

It might seem strange that four morphisms $\alpha,\beta,\gamma,\delta$ 
appear in (i) of the definition above.
However, to the best of the authors' knowledge, 
if we add the conditions $\alpha=\delta$ and $\beta=\gamma$, 
there is no guarantee that \cref{thm:introdist} below holds.
See also \cref{rem:fourmor}.

Now, let us consider the distance between an object in $\cD(M)$ and its Hamiltonian deformation.
Let $H \colon T^*M \times I \to \bR$ be a compactly supported Hamiltonian function.
Then, using the sheaf quantization associated with the Hamiltonian isotopy $\phi^H$ due to Guillermou--Kashiwara--Schapira~\cite{GKS} one can define a functor $\Phi^H_1 \colon \cD(M) \to \cD(M)$, which induces a functor $\Phi^H_1 \colon \cD_A(M) \to \cD_{\phi^H_1(A)}(M)$ for any compact subset $A$ of $T^*M$.
Our first result is the following.

\begin{theorem}[{see \cref{thm:GKShtpy}}]\label{thm:introdist}
	Let $G \in \cD(M)$ and $H \colon T^*M \times I \to \bR$ be a compactly supported Hamiltonian function.
	Then $d_{\cD(M)}(G,\Phi^H_1(G)) \le \|H\|$.
\end{theorem}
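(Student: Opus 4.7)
Set $c^+ := \int_0^1 \max_p H_s(p) \, ds$ and $c^- := -\int_0^1 \min_p H_s(p) \, ds$. Since $H$ is compactly supported on $T^*M \times I$, each slice $H_s$ vanishes outside a compact subset of $T^*M$, whence $c^\pm \ge 0$ and $c^+ + c^- = \|H\|$. I will show that $(G, \Psi^H_1(G))$ is $(c^-, c^+)$-interleaved, which immediately yields the desired bound $d_{\cD(M)}(G, \Psi^H_1(G)) \le \|H\|$.

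The central device is to pass to two normalised Hamiltonians: $H^+_s(p) := H_s(p) - \min_q H_s(q) \ge 0$ and $H^-_s(p) := H_s(p) - \max_q H_s(q) \le 0$. These differ from $H$ only by functions of $s$ alone, so all three generate the same Hamiltonian isotopy $\phi^H$ on $T^*M$. In Tamarkin's framework, however, the Guillermou--Kashiwara--Schapira sheaf quantization is sensitive to such constants: adding a function $f(s)$ to $H$ translates the associated kernel by $\int_0^1 f(s)\,ds$ in the $\bR$-factor of $M \times \bR$. This produces canonical identifications $\Psi^{H^+}_1(G) \simeq {T_{c^-}}_{*}\Psi^H_1(G)$ and $\Psi^H_1(G) \simeq {T_{c^+}}_{*}\Psi^{H^-}_1(G)$.

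By the monotonicity part of the GKS theorem, the non-negative Hamiltonian $H^+$ supplies a canonical natural transformation $\mathrm{id} \to \Psi^{H^+}_1$, and the non-positive Hamiltonian $H^-$ supplies a canonical $\Psi^{H^-}_1 \to \mathrm{id}$. Evaluating at $G$ and combining with the translation isomorphisms above yields morphisms $\alpha \colon G \to {T_{c^-}}_{*}\Psi^H_1(G)$ and $\beta \colon \Psi^H_1(G) \to {T_{c^+}}_{*}G$; I take these as the interleaving data, setting $\delta := \alpha$ and $\gamma := \beta$.

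The \textbf{main technical obstacle} --- which I expect to be the crux of the proof --- is to verify conditions (1) and (2) of \pref{def:introdefab}, i.e.\ that the two compositions ${T_{c^-}}_{*}\beta \circ \alpha$ and ${T_{c^+}}_{*}\delta \circ \gamma$ agree with $\tau_{0,\|H\|}$ on $G$ and on $\Psi^H_1(G)$ respectively. Both amount to the same identity of natural transformations, of the shape $\mathrm{id} \to \Psi^{H^+}_1 \simeq {T_{\|H\|}}_{*}\Psi^{H^-}_1 \to {T_{\|H\|}}_{*}$. I would establish it via the compatibility of the GKS sheaf quantization with concatenation of Hamiltonian isotopies: concatenating the flow of $H^+$ with the time-reversal of $H^-$ realises the identity symplectomorphism on $T^*M$ while accumulating a net $\bR$-translation of $c^+ + c^-$, so the composed sheaf kernel must be the translation kernel ${T_{\|H\|}}_{*}$ and the composite natural transformation is forced to be $\tau_{0,\|H\|}$. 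The authors' cautionary remark about the $\alpha = \delta$, $\beta = \gamma$ variant of the definition is a warning that this last identification is delicate, so it will have to be carried out by tracking the GKS construction at the level of kernels.
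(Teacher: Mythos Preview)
Your approach is quite different from the paper's, and as written it has two genuine gaps.

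The paper never normalizes $H$ nor invokes any monotonicity of the GKS construction. It works directly with the homotopy sheaf $G' := K \circ G \in \Db(M \times \bR \times I)$ and the microsupport bound $\MS(G') \subset T^*M \times \{-\max_p H_s \cdot \tau \le \sigma \le -\min_p H_s \cdot \tau\}$. A general result (\pref{prp:abisomhtpy}) shows that any sheaf obeying such a cone bound yields an interleaving between its fibers at $s=0$ and $s=1$: via the microlocal cut-off lemma one proves that the push-forwards $Rq_*(G'_{M\times\bR\times[s_1,s_2)})$ and $Rq_*(G'_{M\times\bR\times(s_1,s_2]})$ are torsion of controlled order, and then \pref{lem:abisomtor} extracts interleaving morphisms from the resulting distinguished triangles. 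Those morphisms arise as \emph{non-unique} lifts through triangles, so $\alpha$ and $\delta$ need not coincide --- this is exactly why the authors allow four morphisms in \pref{def:introdefab}. A Riemann-sum subdivision of $[0,1]$ together with \pref{lem:abisomsum} then replaces the sup bounds by the integrals $E_\pm(H)$.

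The first gap in your route: there is no ``monotonicity part of the GKS theorem'' in \cite{GKS} or in this paper. A canonical natural transformation $\id \to \Psi^{H^+}_1$ for $H^+ \ge 0$ is a separate statement you would have to prove, and your $H^\pm$ are not compactly supported (outside the spatial support of $H$ one has $H^+_s \equiv -\min_q H_s(q) \neq 0$), so \pref{thm:GKS} does not even apply to them as stated. The second gap: your claim that conditions (1) and (2) ``amount to the same identity of natural transformations'' is not correct. Unwinding, condition (1) asks that $(\id \to \Psi^{H^+}_1)$ followed by ${T_{\|H\|}}_*(\Psi^{H^-}_1 \to \id)$ equal $\tau_{0,\|H\|}$, whereas condition (2) asks that $(\Psi^{H^-}_1 \to \id)$ followed by $(\id \to \Psi^{H^+}_1)$ equal $\tau_{0,\|H\|}$ on $\Psi^{H^-}_1$; these are two distinct compatibilities, with the monotonicity morphisms composed in opposite orders, and each requires its own argument. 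Verifying that your monotonicity transformations interact correctly with GKS kernel concatenation in both orders is precisely the step you flag as delicate but do not carry out, and it is the reason the paper's torsion-based approach --- which never needs $\alpha=\delta$ --- is easier to complete.
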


The outline of the proof is as follows.
First we prove that the distance between two objects is controlled  by the angle of a cone which contains the microsupport of a ``homotopy sheaf" connecting them. 
Then using the sheaf quantization associated with $\phi^H$, we can construct a homotopy sheaf $G' \in \Db(M \times \bR \times I)$ such that $G'|_{M \times \bR \times \{0\}} \simeq G, G'|_{M \times \bR \times \{1\}} \simeq \Phi^H_1(G)$ and $\MS(G') \subset T^*M \times \gamma_H$, where
\begin{equation}
\gamma_H=
\left\{ (t,s;\tau,\sigma) \;\middle|\; -\max_p H_s(p) \cdot \tau \le \sigma \le -\min_p H_s(p) \cdot \tau \right\}
\subset T^*(\bR \times I).
\end{equation}
We thus obtain the result.

Next, we use the above result to estimate the displacement energy.
One can define an internal Hom functor $\cHom^\star$ on the category $\cD(M)$, which satisfies the isomorphism
\begin{equation}
    Hom_{\cD(M)}(F,G) \simeq H^0 \RG_{M \times [0,+\infty)}(M \times \bR;\cHom^\star(F,G))
\end{equation}
for any $F,G \in \cD(M)$.
Let $q_\bR \colon M \times \bR \to \bR$ denote the projection.
Tamarkin's separation theorem asserts that if $A \cap B =\emptyset$ then $R{q_{\bR}}_*\cHom^\star(F,G) \simeq 0$ for any $F \in \cD_{A}(M)$ and $G \in \cD_{B}(M)$.
See also \cref{sec:Tamarkin}.
Using these notions, we make the following definition.

\begin{definition}
	For $F,G \in \cD(M)$, one defines
	\begin{align}
	e_{\cD(M)}(F,G)
	& :=
	d_{\cD(\pt)}(R{q_\bR}_*\cHom^\star(F,G),0) \\
	\notag & =
	\inf
	\{ c \in \bR_{\ge 0} \mid \tau_{0,c}(R{q_\bR}_* \cHom^\star(F,G))=0 \}.
	\end{align}
\end{definition}

Our main theorem is the following.

\begin{theorem}[{see \cref{thm:energy}}]\label{thm:intromainenergy}
	Let $A$ and $B$ be compact subsets of $T^*M$.
	Then, for any $F \in \cD_A(M)$ and $G \in \cD_B(M)$, one has
	\begin{equation}
	e(A,B) \ge e_{\cD(M)}(F,G).
	\end{equation}
	In particular, for any $F \in \cD_A(M)$ and $G \in \cD_B(M)$,
	\begin{equation}\label{eq:introenergyestineq}
	e(A,B)
	\!\ge\!
		\inf \{c \in \bR_{\ge 0} \mid\! \text{$\Hom_{\cD(M)}(F,G) \!\to\! \Hom_{\cD(M)}(F,{T_c}_*G)$ is zero} \}.
	\end{equation}
\end{theorem}

This theorem implies, in particular, that $\tau_{0,c}(R{q_\bR}_* \cHom^\star(F,G))$ is non-zero for any $c \in \bR_{\ge 0}$, then $A$ and $B$ are mutually non-displaceable.
In this sense, the theorem is a quantitative version of Tamarkin's non-displaceability theorem (see Tamarkin~\cite[Theorem~3.1]{Tamarkin} and Guillermou--Schapira \cite[Theorem~7.2]{GS14}).

\cref{thm:intromainenergy} is proved by Tamarkin's separation theorem and \cref{thm:introdist} as follows.
Suppose that a compactly supported Hamiltonian function $H$ satisfies $A \cap \phi_1^H(B)=\emptyset$.
Then, by Tamarkin's separation theorem, $R{q_\bR}_*\cHom^\star(F,\Phi^H_1(G)) \simeq 0$.
Thus, by fundamental properties of $d_{\cD(M)}$ and \cref{thm:introdist}, we obtain
\begin{align}
e_{\cD(M)}(F,G) 
& =
d_{\cD(\pt)}(R{q_\bR}_*\cHom^\star(F,G),0) \\
\notag & \le 
d_{\cD(M)}(\cHom^\star(F,G),\cHom^\star(F,\Phi^H_1(G))) \\
\notag & \le 
d_{\cD(M)}(G,\Phi^H_1(G)) 
\le
\| H \|.
\end{align}

As an application of \cref{thm:intromainenergy}, we prove that the displacement energy of the image of the compact exact Lagrangian immersion 
\begin{align}
S^m=\{ (x,y) \in \bR^m \times \bR \mid \|x\|^2 + y^2=1 \} & \lto T^*\bR^m \simeq \bR^{2m}, \\
\notag (x,y) & \longmapsto (x;yx)
\end{align}
is greater than or equal to $2/3$ (see \cref{eg:immersionSn}).
Using this estimate, we give a purely sheaf-theoretic proof of the following theorem of Polterovich \cite{Polterovich93}, for subsets of cotangent bundles.
Note that he proved the result for more general class of symplectic manifolds, using pseudo-holomorphic curves.

\begin{proposition}[{\cite[Corollary~1.6]{Polterovich93}}]
	Let $A$ be a compact subset of $T^*M$ whose interior is non-empty.
	Then its displacement energy is positive: $e(A,A)>0$.
\end{proposition}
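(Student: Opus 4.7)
By \pref{thm:intromainenergy}, it suffices to exhibit a pair $F, G \in \cD_A(M)$ satisfying $e_{\cD(M)}(F, G) > 0$, since then $e(A, A) \ge e_{\cD(M)}(F, G) > 0$. The strategy is to localize inside $A$ a rescaled copy of the exact Lagrangian sphere immersion from \pref{eg:immersionSn} by means of a Darboux chart, and to transport the sheaves $F_0, G_0 \in \cD_{\iota(S^m)}(\bR^m)$ realizing the bound $e_{\cD(\bR^m)}(F_0, G_0) \ge 2/3$ to objects of $\cD_A(M)$.

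Pick an interior point $p_0$ of $A$. Using a compactly supported Hamiltonian diffeomorphism of $T^*M$, which preserves displacement energies and the interior condition, we may assume $p_0$ lies in the zero section $0_M$. A coordinate chart $\psi \colon U \to \psi(U) \subset \bR^m$ on $M$ centered at $\pi(p_0)$ then induces a canonical symplectomorphism $T^*\psi \colon T^*U \xrightarrow{\sim} T^*\psi(U)$ sending $p_0$ to $0 \in T^*\bR^m$. For $\lambda > 0$ the dilation $\phi_\lambda(x, \xi) := (\lambda x, \lambda \xi)$ of $T^*\bR^m$ satisfies $\phi_\lambda^* \omega = \lambda^2 \omega$; for $\lambda$ sufficiently small, $S_\lambda := \phi_\lambda(\iota(S^m))$ is contained in $T^*\psi(U)$, and its preimage $S := (T^*\psi)^{-1}(S_\lambda) \subset T^*M$ lies in $\mathrm{Int}(A)$.

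The pushforward of $F_0, G_0$ along the diffeomorphism $(y, t) \mapsto (\lambda y, \lambda^2 t)$ of $\bR^m \times \bR$ produces objects $F_\lambda, G_\lambda \in \cD_{S_\lambda}(\bR^m)$; the factor $\lambda^2$ in the $\bR$-direction is chosen so that the microsupport lands in the cone over $\phi_\lambda(\iota(S^m))$, and it simultaneously multiplies the translation distance by $\lambda^2$, yielding $e_{\cD(\bR^m)}(F_\lambda, G_\lambda) \ge 2\lambda^2/3$. Since $S_\lambda$ is compactly contained in $T^*\psi(U)$, we may arrange $F_\lambda, G_\lambda$ to have support compactly contained in $\psi(U) \times \bR$; pulling back via $\psi \times \mathrm{id}_{\bR}$ and extending by zero to $M \times \bR$ then produces $F, G \in \cD_S(M) \subset \cD_A(M)$ with $e_{\cD(M)}(F, G) \ge 2\lambda^2/3$, and \pref{thm:intromainenergy} gives $e(A, A) \ge 2\lambda^2/3 > 0$.

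The main difficulty lies in the sheaf-transport step. One must verify that the rescaling in $(y, t)$ produces the correct microsupport cone (the asymmetric factor $\lambda^2$ in the $\bR$-direction versus $\lambda$ in $\bR^m$ is essential, since $\phi_\lambda$ is conformal symplectic rather than symplectic), and that the chart transport combined with extension by zero preserves the cone without introducing spurious microsupport. Provided $F_0, G_0$ can be chosen with support in a compact subset of $\bR^m \times \bR$, which is consistent with the compactness of $\iota(S^m)$ and the expected form of the objects built in \pref{eg:immersionSn}, these points are routine and the lower bound follows.
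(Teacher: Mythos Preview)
Your proposal is correct and follows essentially the same route as the paper's proof: reduce (via a symplectomorphism or Hamiltonian diffeomorphism preserving $e$) to the case where $\Int(A)$ meets the zero section, choose local coordinates, rescale the immersed sphere from \pref{eg:immersionSn} to fit inside $\Int(A)$, transport the sheaf $\bfk_Z$ by the map $(y,t)\mapsto(\lambda y,\lambda^2 t)$ (the paper writes down the resulting set $Z_\varepsilon$ explicitly rather than phrasing it as a pushforward), extend by zero to $M\times\bR$, and read off the bound $2\lambda^2/3$. The only cosmetic difference is that the paper inserts the monotonicity step $e(A)\ge e(\iota_\varepsilon(S^m))$ before invoking \pref{thm:energy}, whereas you apply \pref{thm:energy} directly to $A$ using $\cD_{\iota_\lambda(S^m)}(M)\subset\cD_A(M)$; these are equivalent. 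Your hedge about compact support is unnecessary: in \pref{eg:immersionSn} one takes $F_0=G_0=\bfk_Z$ with $Z$ compact, so the support condition and the extension-by-zero step go through exactly as in the paper.
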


\subsection{Related topics}

The interleaving distance for persistence modules is now widely used in topological data analysis (see, for example, \cite{CCSGGO,CdSGO16}).
Recently, Kashiwara--Schapira~\cite{KS18persistent} interpreted the distance as that on the derived category of sheaves.
In symplectic geometry, the notion of persistence modules was introduced by Polterovich--Shelukhin~\cite{PS16} (see also Polterovich--Shelukhin--Stojisavljevi{\'c}~\cite{PSS17}).
For barcodes of chain complexes over Novikov fields such as Floer cohomology complexes, see also Usher--Zhang~\cite{UZ16}.
Note also that \cref{thm:introdist} seems to be related to the results of Schwarz~\cite{Schwarz00} and Oh~\cite{Oh05} for continuation maps, although they did not use persistence modules.

As remarked in Tamarkin~\cite[Section~1]{Tamarkin}, for $F,G \in \cD(M)$, one can associate a submodule $H(F,G)$ of $\prod_{c \in \bR}\Hom_{\cD(M)}(F,{T_c}_*G)$, which is a module over a Novikov ring $\Lambda_{0, \mathrm{nov}}(\bfk)$ (with a formal variable $T$).
Using this module, we can express \cref{eq:introenergyestineq} in \cref{thm:intromainenergy} as
\begin{equation}
e(A,B) \ge \inf \{c \in \bR_{\ge 0} \mid \text{$H(F,G)$ is  $T^c$-torsion} \}.
\end{equation}
See \cref{rem:novikovmod} for more details.
This inequality seems to be closely related to the estimate of the displacement energy discussed in Fukaya--Oh--Ohta--Ono~\cite[Theorem~J]{FOOO09,FOOO092} and \cite[Theorem~6.1]{FOOO13}.

\subsection{Organization}

This paper is structured as follows.
In \cref{sec:preliminaries}, we recall some basics of the microlocal sheaf theory.
In \cref{sec:Tamarkin}, we review results of \cite{Tamarkin, GKS, GS14} on Tamarkin's separation theorem and sheaf quantization of Hamiltonian isotopies.
\cref{sec:displacement-energy} is the main part of the paper.
First, we introduce the translation distance $d_{\cD(M)}$ on Tamarkin's category and prove \cref{thm:introdist}.
Then we show \cref{thm:intromainenergy} and give some examples and applications.

\subsection*{Acknowledgments}

The authors would like to thank their supervisor Mikio Furuta for his encouragement and helpful advice.
They also thank Manabu Akaho for helpful discussions and Kaoru Ono for drawing their attention to relation between Tamarkin's theorem and the displacement energy.
The second named author is grateful to Vincent Humili{\`e}re, Alexandru Oancea, and Pierre Schapira for fruitful discussions.
The second named author also expresses his gratitude to IMJ-PRG and ``equipe Analyse Alg{\'e}brique" for hospitality during the preparation of this paper.
This work was partially supported by a Grant-in-Aid for JSPS Fellows 15J07993 and the Program for Leading Graduate Schools, MEXT, Japan.

\section{Preliminaries on microlocal sheaf theory}\label{sec:preliminaries}

Throughout this paper, all manifolds are assumed to be of class $C^\infty$ without boundary.
Until the end of this paper, let $\bfk$ be a field.

In this section, we recall some basics of the microlocal sheaf theory due to Kashiwara and Schapira~\cite{KS90}.
We mainly follow the notation in \cite{KS90}.

\subsection{Geometric notions (\cite[\S4.3, \S A.2]{KS90})}\label{subsec:geometric}

Let $X$ be a $C^\infty$-manifold without boundary.
For a locally closed subset $A$ of $X$, we denote by $\overline{A}$ its closure and by $\Int(A)$ its interior.
We also denote by $\Delta_X$ or simply $\Delta$ the diagonal of $X \times X$.
We denote by $\tau_X \colon TX \to X$ the tangent bundle of $X$ and by $\pi_X \colon T^*X \to X$ the cotangent bundle of $X$.
If there is no risk of confusion, we simply write $\pi$ instead of $\pi_X$.
For a submanifold $M$ of $X$, we denote by $T^*_MX$ the conormal bundle to $M$ in $X$.
In particular, $T^*_XX$ denotes the zero-section of $T^*X$.
We set $\rT X:=T^*X \setminus T^*_XX$.

Let $f \colon X \to Y$ be a morphism of manifolds.
With $f$ we associate morphisms and a commutative diagram
\begin{equation}\label{diag:fpifd}
\begin{split}
\xymatrix{
	T^*X \ar[d]_-{\pi_X} & X \times_Y T^*Y \ar[d]^-\pi \ar[l]_-{f_d} \ar[r]^-{f_\pi} & T^*Y \ar[d]^-{\pi_Y} \\
	X \ar@{=}[r] & X \ar[r]_-f & Y,
}
\end{split}
\end{equation}
where $f_\pi$ is the projection and $f_d$ is induced by the transpose of the tangent map $f' \colon TX \to X \times_Y TY$.

We denote by $(x;\xi)$ a local homogeneous coordinate system of $T^*X$.
The cotangent bundle $T^*X$ is an exact symplectic manifold with the Liouville 1-form $\alpha_{T^*X}=\langle \xi, dx \rangle$.
The antipodal map $a \colon T^*X \to T^*X$ is defined by $(x;\xi) \mapsto (x;-\xi)$.
For a subset $A$ of $T^*X$, we denote by $A^a$ its image under the map $a$.

\subsection{Microsupports of sheaves (\cite[\S5.1, \S5.4, \S6.1]{KS90})}\label{subsec:microsupport}

For a manifold $X$, we denote by $\bfk_X$ the constant sheaf with stalk $\bfk$ and by $\Db(X)=\Db(\bfk_X)$ the bounded derived category of sheaves of $\bfk$-vector spaces on $X$.
One can define Grothendieck's six operations between derived categories of sheaves $\cRHom,\allowbreak \otimes, \allowbreak Rf_*,\allowbreak f^{-1},\allowbreak Rf_!,\allowbreak f^!$ for a morphism of manifolds $f \colon X \to Y$. 
Since we work over the field $\bfk$, we simply write $\otimes$ instead of $\lten$.
Moreover for $F \in \Db(X)$ and $G \in \Db(Y)$, we define their external tensor product $F \boxtimes G \in \Db(X \times Y) $ by $F \boxtimes G :=q_X^{-1}F \otimes q_Y^{-1}G$, where $q_X \colon X \times Y \to X$ and $q_Y \colon X \times Y \to Y$ are the projections.
For a locally closed subset $Z$ of $X$, we denote by $\bfk_Z \in \Db(X)$ the constant sheaf with stalk $\bfk$ on $Z$, extended by $0$ on $X \setminus Z$.
Moreover, for a locally closed subset $Z$ of $X$ and $F \in \Db(X)$, we define
\begin{equation}
F_Z:=F \otimes \bfk_Z, \quad \RG_Z(F):=\cRHom(\bfk_Z,F).
\end{equation}
One denotes by $\omega_X \in \Db(X)$ the dualizing complex on $X$, that is, $\omega_X:=a_X^!\bfk$, where $a_X \colon X \to \pt$ is the natural morphism.
Note that $\omega_X$ is isomorphic to $\ori_X[\dim X]$, where $\ori_X$ is the orientation sheaf on $X$.
More generally, for a morphism of manifolds $f \colon X \to Y$, we denote by $\omega_f=\omega_{X/Y}:=f^!\bfk_Y \simeq \omega_X \otimes f^{-1}\omega_Y^{\otimes -1}$ the relative dualizing complex.
For $F \in \Db(X)$, we define the Verdier dual of $F$ by $\bD_XF:=\cRHom(F,\omega_X)$.

Let us recall the definition of the \emph{microsupport} $\MS(F)$ of an object $F \in \Db(X)$.

\begin{definition}[{\cite[Definition~5.1.2]{KS90}}]\label{def:microsupport}
	Let $F \in \Db(X)$ and $p \in T^*X$.
	One says that $p \not\in \MS(F)$ if there is a neighborhood $U$ of $p$ in $T^*X$ such that for any $x_0 \in X$ and any $C^\infty$-function $\varphi$ on $X$ (defined on a neighborhood of $x_0$) with $d\varphi(x_0) \in U$, one has $\RG_{\{\varphi \ge \varphi(x_0)\}}(F)_{x_0} \simeq 0$.
\end{definition}

The following properties can be checked from the definition of microsupports.
\begin{enumerate}
	\item The microsupport of an object in $\Db(X)$ is a conic (i.e., invariant under the action of $\bR_{>0}$ on $T^*X$) closed subset of $T^*X$.
	\item For an object $F \in \Db(X)$, one has $\MS(F) \cap T^*_XX=\pi(\MS(F))=\Supp(F)$.
	\item The microsupports satisfy the triangle inequality: if $F_1 \lto F_2 \lto F_3 \toone$ is a distinguished triangle in $\Db(X)$, then $\MS(F_i) \subset \MS(F_j) \cup \MS(F_k)$ for $j \neq k$.
\end{enumerate}
We also use the notation $\mathring{\MS}(F):=\MS(F) \cap \rT X=\MS(F) \setminus T^*_XX$.

\begin{example}
	(i) If $F$ is a locally constant sheaf on $X$, then $\MS(F) \subset T^*_XX$.
	Conversely, if $\MS(F) \subset T^*_XX$ then the cohomology sheaves $H^k(F)$ are locally constant for all $k \in \bZ$.
	\smallskip
	
	\noindent (ii) Let $M$ be a closed submanifold of $X$.
	Then $\MS(\bfk_M)=T^*_MX \subset T^*X$.
	\smallskip
	
	\noindent (iii) Let $\varphi$ be a $C^\infty$-function on $X$ and assume that $d\varphi(x) \neq 0$ for any $x \in \varphi^{-1}(0)$.
	Set $U:=\{x \in X \mid \varphi(x)>0\}$ and $Z:=\{x \in X\mid \varphi(x) \ge 0\}$.
	Then
	\begin{equation}
	\begin{split}
	\MS(\bfk_U)=T^*_XX|_U \cup
	\{(x;\lambda d\varphi(x)) \mid \varphi(x)=0, \lambda \le 0\}, \\
	\MS(\bfk_Z)=T^*_XX|_Z \cup
	\{(x;\lambda d\varphi(x)) \mid \varphi(x)=0, \lambda \ge 0\}.
	\end{split}
	\end{equation}
\end{example}

The following proposition is called (a particular case of) the microlocal Morse lemma.
See \cite[Proposition~5.4.17 and Corollary~5.4.19]{KS90} for more details.
The classical theory corresponds to the case $F$ is the constant sheaf~$\bfk_X$.

\begin{proposition}\label{prp:microlocalmorse}
	Let $F \in \Db(X)$ and $\varphi \colon X \to \bR$ be a $C^\infty$-function.
	Moreover, let $a,b \in \bR$ with $a<b$ or $a \in \bR, b=+\infty$.
	Assume that
	\begin{enumerate}
		\renewcommand{\labelenumi}{$\mathrm{(\arabic{enumi})}$}
		\item $\varphi$ is proper on $\Supp(F)$,
		\item  $d\varphi(x) \not\in \MS(F)$ for any $x \in \varphi^{-1}([a,b))$.
	\end{enumerate}
	Then the canonical morphism
	\begin{equation}
	\RG(\varphi^{-1}((-\infty,b));F)
	\lto
	\RG(\varphi^{-1}((-\infty,a));F)
	\end{equation}
	is an isomorphism.
\end{proposition}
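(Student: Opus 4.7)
The plan is to reduce the statement to a one-dimensional microsupport calculation on $\bR$ via the proper direct image $R\varphi_*$, and then to conclude using the defining property of the microsupport. First, since $\varphi|_{\Supp(F)}$ is proper by hypothesis~(1), one has $R\varphi_* F \simeq R\varphi_! F$; setting $G := R\varphi_* F \in \Db(\bR)$ and invoking base change, one obtains $\RG(\varphi^{-1}((-\infty, t)); F) \simeq \RG((-\infty, t); G)$ for every $t \in \bR \cup \{+\infty\}$. Thus it suffices to show that the restriction $\RG((-\infty, b); G) \to \RG((-\infty, a); G)$ is an isomorphism.

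Next, I would compute $\MS(G)$ via the standard estimate $\MS(R\varphi_! F) \subset \varphi_\pi \varphi_d^{-1}\MS(F)$ for proper direct images (in the notation of diagram~\eqref{diag:fpifd}). If $(t_0; \lambda) \in \MS(G)$ with $t_0 \in [a, b)$, then there is $x_0 \in \varphi^{-1}(t_0) \cap \Supp(F)$ such that $(x_0; \lambda\, d\varphi(x_0)) \in \MS(F)$. Since $\MS(F)$ is $\bR_{>0}$-conic and $d\varphi(x_0) \notin \MS(F)$ by hypothesis~(2), this forces $\lambda \le 0$. Consequently
\[
\MS(G) \cap \{(t;\lambda) \in T^*\bR \mid t \in [a, b),\ \lambda > 0\} = \emptyset.
\]

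Finally, from the distinguished triangle
\[
\RG_{[a,b)}((-\infty,b);G) \to \RG((-\infty,b);G) \to \RG((-\infty,a);G) \toone,
\]
it suffices to establish $\RG_{[a,b)}((-\infty,b);G) \simeq 0$. Applying \pref{def:microsupport} with $\psi(t) = t$, the microsupport vanishing just proved yields $\RG_{\{t \ge t_0\}}(G)_{t_0} \simeq 0$ for every $t_0 \in [a, b)$, and a standard d\'evissage over half-open subintervals of $[a,b)$ then gives the required vanishing. The main obstacle I foresee is the passage to the limit at $b$ (especially when $b = +\infty$): commuting global sections with the filtered union $(-\infty, b) = \bigcup_{c<b}(-\infty, c)$ must avoid $\varprojlim^1$ obstructions. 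Here properness is essential, since it forces the inverse system of cohomologies to be eventually constant on cofinal subsequences, once the stepwise-isomorphism property on $[a, c] \subset [a,b)$ has been established.
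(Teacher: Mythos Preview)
The paper does not supply its own proof of this proposition: it is recalled as background from \cite[Proposition~5.4.17 and Corollary~5.4.19]{KS90}. Your argument is essentially the standard one found there---push forward along $\varphi$ to reduce to a sheaf $G$ on $\bR$, bound $\MS(G)$ via the proper-direct-image estimate (this is \pref{thm:operations}(i), stated just after the proposition in the paper), and then run a one-variable argument.

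Two small comments on execution. First, the ``standard d\'evissage'' step deserves a sentence more: from the stalk vanishing $\RG_{\{t \ge t_0\}}(G)_{t_0}\simeq 0$ one extracts, for each $t_0\in[a,b)$, an $\varepsilon>0$ with $\RG((-\infty,t_0+\varepsilon);G)\to\RG((-\infty,t_0);G)$ an isomorphism; a connectedness argument on the compact interval $[a,c]$ then gives $\RG((-\infty,c);G)\simeq\RG((-\infty,a);G)$ for every $c\in[a,b)$. Second, your concern about $\varprojlim^1$ at $b$ is slightly misplaced. Once the restriction maps along the cofinal family $\{(-\infty,c)\}_{a\le c<b}$ are all isomorphisms, the inverse system of cohomologies is essentially constant and $R\varprojlim$ causes no trouble; properness has already done its job earlier, in identifying $R\varphi_*F$ with $R\varphi_!F$ and in making the microsupport bound available, not at the limit step.
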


Next, we shall consider bounds for the microsupports of proper direct images, non-characteristic inverse images, and $\cRHom$.

\begin{definition}
	Let $f \colon X \to Y$ be a morphism of manifolds and $A \subset T^*Y$ be a closed conic subset.
	The morphism $f$ is said to be \emph{non-characteristic} for $A$ if
	\begin{equation}
	f_\pi^{-1}(A) \cap f_d^{-1}(T^*_XX) \subset X \times_Y T^*_YY.
	\end{equation}
\end{definition}

See \eqref{diag:fpifd} for the notation $f_\pi$ and $f_d$.
In particular, any submersion from $X$ to $Y$ is non-characteristic for any closed conic subset of $T^*Y$.
Note that submersions are called smooth morphisms in \cite{KS90}.
One can show that if $f \colon X \to Y$ is non-characteristic for $A \subset T^*Y$, then $f_d f_\pi^{-1}(A)$ is a conic closed subset of $T^*X$.

\begin{theorem}[{\cite[Proposition~5.4.4 and Proposition~5.4.13]{KS90}}]\label{thm:operations}
	Let $f \colon X \to Y$ be a morphism of manifolds, $F \in \Db(X)$, and $G \in \Db(Y)$.
	\begin{enumerate}
		\item Assume that $f$ is proper on $\Supp(F)$.
		Then $\MS(Rf_*F) \subset f_\pi f_d^{-1}(\MS(F))$.
		\item Assume that $f$ is non-characteristic for $\MS(G)$.
		Then the canonical morphism $f^{-1}G \otimes \omega_{f} \to f^!G$ is an isomorphism and $\MS(f^{-1}G) \cup \MS(f^!G) \subset f_d f_\pi^{-1}(\MS(G))$.
	\end{enumerate}
\end{theorem}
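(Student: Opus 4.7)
The plan is to reduce to two elementary cases by factoring $f$ through its graph: write $f = q \circ i_f$, where $i_f \colon X \hookrightarrow X \times Y$, $x \mapsto (x,f(x))$ is the graph embedding and $q \colon X \times Y \to Y$ is the second projection. Since both $f_\pi f_d^{-1}$ and $f_d f_\pi^{-1}$ behave functorially under composition, and since the hypotheses split accordingly (properness of $f$ on $\Supp F$ corresponds to properness of $q$ on $\Supp R{i_f}_* F$; non-characteristic for $f$ decomposes into a trivial condition for the submersion $q$ and a non-trivial one for $i_f$), it suffices to verify (i) and (ii) for each factor separately.

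For (i) applied to the closed embedding $i_f$, the bound is a direct computation from \pref{def:microsupport}: for a test function $\varphi$ on $X \times Y$ with $d\varphi(p) = \eta$, the stalk $\RG_{\{\varphi \ge \varphi(p)\}}(R{i_f}_* F)_p$ equals $\RG_{\{\varphi \circ i_f \ge \varphi(p)\}}(F)_{i_f^{-1}(p)}$, so if $\eta \notin (i_f)_\pi (i_f)_d^{-1}(\MS(F))$ then $d(\varphi \circ i_f) \notin \MS(F)$ at $i_f^{-1}(p)$ and vanishing follows. For the projection $q$ with $F$ proper over $Y$, the key input is proper base change, which identifies $\RG_{\{\psi \ge \psi(y_0)\}}(Rq_* F)_{y_0}$ with $\RG_{\{\psi \circ q \ge \psi(y_0)\}}(F)_{q^{-1}(y_0)}$ for $\psi$ defined near $y_0$. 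If $(y_0;d\psi(y_0))$ does not lie in $q_\pi q_d^{-1}(\MS F)$, then $d(\psi \circ q) \notin \MS(F)$ along the compact fiber $q^{-1}(y_0) \cap \Supp F$, and \pref{prp:microlocalmorse} applied fiberwise (using the properness to get uniformity in a neighborhood of the fiber) gives the required vanishing.

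For (ii), the projection $q$ is a submersion, hence $q^{-1}G \simeq G \boxtimes \bfk_X$ (up to reordering of factors) and the bound on $\MS(q^{-1}G)$ follows from the K\"unneth-type behavior of microsupport together with $\MS(\bfk_X) = T^*_X X$; the isomorphism $q^{-1}G \otimes \omega_q \simeq q^! G$ is the standard identity for smooth morphisms. For the graph embedding $i_f$, the non-characteristic hypothesis translates to the condition that, in the decomposition $T^*(X \times Y)|_X \simeq T^*X \oplus T^*Y|_X$, the only element of $(0 \oplus \MS(G)|_X)$ whose image under the differential of $i_f$ vanishes is the zero covector, and this is precisely what is needed to run a non-characteristic deformation: one shrinks a neighborhood of the graph to $X$ along directions transverse to $\MS(q^{-1}G)$, applies \pref{prp:microlocalmorse} to see the restriction of cohomology sections does not change, and simultaneously obtains the bound on $\MS(i_f^{-1}G)$ and $\MS(i_f^! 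G)$ together with the isomorphism $i_f^{-1}G \otimes \omega_{i_f} \xrightarrow{\sim} i_f^! G$.

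The main obstacle is the direct image case under $q$: one must apply the microlocal Morse lemma along an entire fiber and know the conclusion propagates to a neighborhood of the fiber in $Y$, and this requires the properness hypothesis in an essential way to obtain uniform non-characteristic bounds. Once this technical point is handled via compactness of $q^{-1}(y_0) \cap \Supp F$ and openness of the complement of the closed conic set $\MS(F)$, the rest of the argument is the routine bookkeeping described above.
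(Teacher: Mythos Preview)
The paper does not prove this statement at all: it is quoted verbatim from \cite[Propositions~5.4.4 and 5.4.13]{KS90} as background in the preliminaries section, with no argument given. There is therefore nothing in the paper to compare your proposal against.

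That said, your sketch follows the standard route taken in \cite{KS90} itself: factor $f$ through the graph embedding and the projection, and treat closed embeddings and submersions separately. The outline is broadly correct, though some steps are more delicate than you indicate. For the direct image under the projection $q$, the argument in \cite{KS90} does not literally apply \pref{prp:microlocalmorse} ``fiberwise'' as you describe; rather it uses the normal deformation and a direct estimate (Proposition~5.4.4 there). Your phrasing ``applies \pref{prp:microlocalmorse} fiberwise'' is circular as written, since \pref{prp:microlocalmorse} is itself a consequence of the theorem you are proving. Similarly, for the inverse image under a closed embedding, the isomorphism $i_f^{-1}G \otimes \omega_{i_f} \simto i_f^! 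G$ and the microsupport bound require the machinery of $\mu hom$ or the normal deformation (see \cite[\S5.4, \S6.7]{KS90}), not just a shrinking argument. So while the architecture of your proof is right, the individual steps would need to be grounded in tools independent of the statement itself.
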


For closed conic subsets $A$ and $B$ of $T^*X$, let us denote by $A+B$ the fiberwise sum of $A$ and $B$, that is,
\begin{equation}
    A+B
    :=
	\left\{
	    (x;a+b) 
	    \; \middle| \; 
	    \begin{aligned}
	        & x \in \pi(A) \cap \pi(B),\\
	        & a \in A \cap \pi^{-1}(x), b \in B \cap \pi^{-1}(x)
	    \end{aligned}
    \right\} 
    \subset T^*X.
\end{equation}

\begin{proposition}[{\cite[Proposition~5.4.14]{KS90}}]\label{prp:SStenshom}
	Let $F, G \in \Db(X)$.
	\begin{enumerate}
		\item If $\MS(F) \cap \MS(G)^a \subset T^*_XX$, then
		$\MS(F \otimes G) \subset \MS(F)+\MS(G)$.
		\item If $\MS(F) \cap \MS(G) \subset T^*_XX$, then
		$\MS(\cRHom(F,G)) \subset \MS(F)^a+\MS(G)$.
		Moreover if $F$ is cohomologically constructible (see \cite[\S3.4]{KS90} for the definition), the natural morphism $\cRHom(F,\bfk_X) \otimes G \to \cRHom(F,G)$ is an isomorphism.
	\end{enumerate}
\end{proposition}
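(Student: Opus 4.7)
The plan is to reduce both bounds to the non-characteristic inverse image theorem (Theorem~\ref{thm:operations}) applied along the diagonal embedding $\delta \colon X \hookrightarrow X \times X$. The two auxiliary ingredients I would first establish are the external-product microsupport bounds
\begin{equation}
\MS(F \boxtimes G) \subset \MS(F) \times \MS(G), \quad \MS(\cRHom(q_1^{-1}F, q_2^!G)) \subset \MS(F)^a \times \MS(G),
\end{equation}
which are checked directly from Definition~\ref{def:microsupport}. Given a point $(x_0,y_0;\xi_0,\eta_0)$ outside the right-hand side, at least one of $\xi_0,\eta_0$ lies outside the corresponding microsupport; one then tests against $C^\infty$ functions $\varphi$ that depend on only one of the two variables, so that the K\"unneth/projection formula and base change reduce the required vanishing of $\RG_{\{\varphi \ge \varphi_0\}}(-)_{(x_0,y_0)}$ to the corresponding one-variable vanishing for $F$ or $G$.

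For (i), use $F \otimes G \simeq \delta^{-1}(F \boxtimes G)$. Under the identification $X \times_{X\times X} T^*(X \times X) \simeq T^*X \oplus T^*X$, the maps of diagram~\eqref{diag:fpifd} attached to $\delta$ become $\delta_\pi = \mathrm{id}$ and $\delta_d(x;\xi,\eta) = (x;\xi+\eta)$. Thus $\delta$ is non-characteristic for $\MS(F) \times \MS(G)$ precisely when $(x;\xi) \in \MS(F)$, $(x;\eta) \in \MS(G)$, $\xi + \eta = 0$ force $\xi = \eta = 0$, which is exactly the hypothesis $\MS(F) \cap \MS(G)^a \subset T^*_XX$. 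Theorem~\ref{thm:operations}\,(ii) then yields
$\MS(F \otimes G) \subset \delta_d \delta_\pi^{-1}(\MS(F) \times \MS(G)) = \MS(F) + \MS(G)$.

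Part (ii) proceeds along the same lines starting from $\cRHom(F, G) \simeq \delta^! \cRHom(q_1^{-1}F, q_2^!G)$: the non-characteristic condition at $\MS(F)^a \times \MS(G)$ becomes $\MS(F) \cap \MS(G) \subset T^*_XX$ (the antipode on $\MS(F)$ is absorbed on intersecting with $\MS(G)$), and Theorem~\ref{thm:operations}\,(ii) gives $\MS(\cRHom(F,G)) \subset \MS(F)^a + \MS(G)$ after computing $\delta_d\delta_\pi^{-1}$ as in (i). For the final isomorphism, the natural morphism $\cRHom(F, \bfk_X) \otimes G \to \cRHom(F, G)$ is always defined, and being an isomorphism is a local question; when $F$ is cohomologically constructible, one may locally realize $\cRHom(F, \bfk_X)$ as a bounded complex of finite-rank locally constant sheaves along a stratification, reducing the claim to the standard identity $\cRHom(P, \bfk_X) \otimes G \simeq \cRHom(P, G)$ for such $P$. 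I expect the main obstacle to be the direct test-function verification of the external $\cRHom$-bound: the antipode must appear on the $\MS(F)$-factor but not on $\MS(G)$, and this asymmetry, which is precisely what distinguishes (i) from (ii), has to be tracked carefully through the computation because $q_2^!G$ involves the relative dualizing complex rather than merely $q_2^{-1}G$.
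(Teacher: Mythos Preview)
The paper does not give its own proof of this proposition: it is quoted verbatim from \cite[Proposition~5.4.14]{KS90} and used as a black box throughout. There is therefore nothing in the present paper to compare your argument against.

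That said, your outline is essentially the standard reduction used in \cite{KS90}: part~(i) via $F\otimes G\simeq\delta^{-1}(F\boxtimes G)$, the box-product bound $\MS(F\boxtimes G)\subset\MS(F)\times\MS(G)$, and Theorem~\ref{thm:operations}(ii) for the diagonal is exactly how the proof goes, and your computation of the non-characteristic condition and of $\delta_d\delta_\pi^{-1}$ is correct. For part~(ii) the identity $\cRHom(F,G)\simeq\delta^{!}\cRHom(q_1^{-1}F,q_2^{!}G)$ you invoke is \cite[Proposition~3.1.13]{KS90} composed with $(q_i\circ\delta)=\id$, and the rest of the reduction is parallel to~(i). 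The one place where your sketch is thinner than it looks is the ``direct test-function verification'' of $\MS(\cRHom(q_1^{-1}F,q_2^{!}G))\subset\MS(F)^a\times\MS(G)$: moving $\RG_{\{\varphi\ge c\}}$ inside $\cRHom$ works cleanly on the second argument but not on the first, so the case $(x_0;-\xi_0)\notin\MS(F)$ does not reduce by the same one-line K\"unneth/base-change trick you use for $F\boxtimes G$; in \cite{KS90} this bound is obtained by a separate argument (essentially via $\MS(\mathrm{D}'F)\subset\MS(F)^a$ and a product estimate) rather than by a direct test. Your final paragraph on cohomological constructibility is also more optimistic than the actual argument: cohomologically constructible does not mean stratified-constructible, so one cannot in general ``locally realize $\cRHom(F,\bfk_X)$ as a bounded complex of finite-rank locally constant sheaves''; the isomorphism in \cite{KS90} is instead obtained from the defining duality properties of cohomologically constructible objects.
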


Using microsupports, we can microlocalize the category $\Db(X)$.
Let $A \subset T^*X$ be a subset and set $\Omega=T^*X \setminus A$.
We denote by $\Db_{A}(X)$ the subcategory of $\Db(X)$ consisting of sheaves whose microsupports are contained in $A$.
By the triangle inequality, the subcategory $\Db_{A}(X)$ is a triangulated subcategory.
We set
\begin{equation}
\Db(X;\Omega):=\Db(X)/\Db_A(X),
\end{equation}
the categorical localization of $\Db(X)$ by $\Db_A(X)$.
A morphism $u \colon F \to G$ in $\Db(X)$ becomes an isomorphism in $\Db(X;\Omega)$ if $u$ is embedded in a distinguished triangle $F \overset{u}{\to} G \to H \overset{+1}{\to}$ with $\MS(H) \cap \Omega=\emptyset$.
For a closed subset $B$ of $\Omega$, $\Db_B(X;\Omega)$ denotes the full triangulated subcategory of $\Db(X;\Omega)$ consisting of $F$ with $\MS(F) \cap \Omega \subset B$.
Note that our notation is the same as in \cite{KS90} and slightly differs from that of \cite{Gu12,Gulec}.

\subsection{Kernels (\cite[\S3.6]{KS90})}\label{subsec:kernels}

For $i=1,2,3$, let $X_i$ be a manifold.
We write $X_{ij}:=X_i \times X_j$ and $X_{123}:=X_1 \times X_2 \times X_3$ for short.
We use the same symbol $q_i$ for the projections $X_{ij} \to X_i$ and $X_{123} \to X_i$.
We also denote by $q_{ij}$ the projection $X_{123} \to X_{ij}$.
Similarly, we denote by $p_{ij}$ the projection $T^*X_{123} \to T^*X_{ij}$.
One denotes by $p_{12^a}$ the composite of $p_{12}$ and the antipodal map on $T^*X_2$.

Let $A \subset T^*X_{12}$ and $B \subset T^*X_{23}$.
We set
\begin{equation}\label{eq:compset}
A \circ B
:=p_{13}(p_{12^a}^{-1}A \cap p_{23}^{-1}B) \subset T^*X_{13}.
\end{equation}
We define the operation of composition of kernels as follows:
\begin{equation}
\begin{split}
&\underset{X_2}{\circ} \colon \Db(X_{12}) \times \Db(X_{23})  \to \Db(X_{13}) \\
&(K_{12},K_{23})  \mapsto K_{12} \underset{X_2}{\circ} K_{23}
:=
R {q_{13}}_!\,(q_{12}^{-1}K_{12}\otimes q_{23}^{-1}K_{23}).
\end{split}
\end{equation}
If there is no risk of confusion, we simply write $\circ$ instead of $\underset{X_2}{\circ}$.
By \cref{thm:operations} and \cref{prp:SStenshom} we have the following.

\begin{proposition}\label{prp:SScomp}
	Let $K_{ij} \in \Db(X_{ij})$ and set $\Lambda_{ij}:=\MS(K_{ij}) \subset T^*X_{ij} \ (ij=12,23)$.
	Assume that
	\begin{enumerate}
		\renewcommand{\labelenumi}{$\mathrm{(\arabic{enumi})}$}
		\item $q_{13}$ is proper on $q_{12}^{-1}\Supp(K_{12}) \cap q_{23}^{-1}\Supp(K_{23})$,
		\item $p_{12^a}^{-1}\Lambda_{12} \cap p_{23}^{-1}\Lambda_{23} \cap (T^*_{X_1}X_1 \times T^*X_2 \times T^*_{X_3}X_3) \subset T^*_{X_{123}}X_{123}$.
	\end{enumerate}
	Then 
	\begin{equation}
	\MS(K_{12} \underset{X_2}{\circ} K_{23}) \subset
	\Lambda_{12} \circ \Lambda_{23}.
	\end{equation}
\end{proposition}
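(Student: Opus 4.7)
The plan is to unfold the definition
\[K_{12} \underset{X_2}{\circ} K_{23} = R{q_{13}}_!\,(q_{12}^{-1}K_{12}\otimes q_{23}^{-1}K_{23})\]
and apply the microsupport estimates \pref{thm:operations} and \pref{prp:SStenshom} in three steps, following the three operations in play: pull-back by the submersions $q_{12},q_{23}$; tensor product; then proper direct image by $q_{13}$.

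First, the projections $q_{12},q_{23}$ are submersions, hence non-characteristic for any closed conic subset, so \pref{thm:operations}(ii) applies. Using local coordinates $(x_1,x_2,x_3;\xi_1,\xi_2,\xi_3)$ on $T^*X_{123}$, one reads off that $\MS(q_{12}^{-1}K_{12})$ is contained in the locus $\xi_3=0$ with $(x_1,x_2;\xi_1,\xi_2)\in\Lambda_{12}$, and symmetrically $\MS(q_{23}^{-1}K_{23})$ lies in the locus $\xi_1=0$ with $(x_2,x_3;\xi_2,\xi_3)\in\Lambda_{23}$.

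Second, to bound the microsupport of the tensor product via \pref{prp:SStenshom}(i), the condition $\MS(q_{12}^{-1}K_{12})\cap\MS(q_{23}^{-1}K_{23})^a\subset T^*_{X_{123}}X_{123}$ must be checked. A point of this intersection, after absorbing the antipodal in the second factor, corresponds exactly to a point of $p_{12^a}^{-1}\Lambda_{12}\cap p_{23}^{-1}\Lambda_{23}$ whose $X_1$- and $X_3$-covectors vanish; by hypothesis~(2) such a point lies in the zero section. Then \pref{prp:SStenshom}(i) bounds $\MS(q_{12}^{-1}K_{12}\otimes q_{23}^{-1}K_{23})$ by the fiberwise sum, i.e.\ by the set of $(x_1,x_2,x_3;\xi_1,\xi_2+\xi_2',\xi_3)$ with $(x_1,x_2;\xi_1,\xi_2)\in\Lambda_{12}$ and $(x_2,x_3;\xi_2',\xi_3)\in\Lambda_{23}$.

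Finally, hypothesis~(1) ensures that $q_{13}$ is proper on the support of the tensor product, so $R{q_{13}}_!\simeq R{q_{13}}_*$ and \pref{thm:operations}(i) applies. The map ${q_{13}}_d$ identifies the fiber product $X_{123}\times_{X_{13}}T^*X_{13}$ with the locus $\xi_2=0$ in $T^*X_{123}$, so the bound retains only the covectors with $\xi_2+\xi_2'=0$; substituting $\xi_2'=-\xi_2$ then reproduces the geometric composition $\Lambda_{12}\circ\Lambda_{23}$ defined in \pref{eq:compset}. The only real obstacle is the bookkeeping of the antipodal twist hidden in $p_{12^a}$ and of which covector coordinates are forced to vanish by each of the three operations; once this indexing is kept straight, the statement is a direct chain of the two cited estimates, with no new geometric input.
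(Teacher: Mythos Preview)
Your proof is correct and is exactly the argument the paper has in mind: the paper does not write out a proof but simply states that the proposition follows from \pref{thm:operations} and \pref{prp:SStenshom}, and your three-step unpacking (pull back along the submersions $q_{12},q_{23}$, apply the tensor estimate using hypothesis~(2), then push forward along $q_{13}$ using hypothesis~(1)) is the standard way to make that sentence precise.
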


\section{Tamarkin's separation theorem and sheaf quantization of Hamiltonian isotopies}\label{sec:Tamarkin}

In what follows, until the end of the paper, let $M$ be a non-empty connected manifold without boundary.

In this section, we recall the definition of Tamarkin's category $\cD(M)$ and the separation theorem due to Tamarkin~\cite{Tamarkin}.
We can prove the non-emptiness of the intersection of two compact subsets of $T^*M$ using the theorem.
We also review the existence result of sheaf quantizations of Hamiltonian isotopies due to Guillermou--Kashiwara--Schapira~\cite{GKS}.
This enables us to consider Hamiltonian deformations in Tamarkin's category.

\subsection{Tamarkin's separation theorem (\cite{Tamarkin,GS14})}

In this subsection, we recall the definition of Tamarkin's category $\cD(M)$ and the separation theorem. 

Denote by $(x;\xi)$ a local homogeneous coordinate system on $T^*M$ and by $(t;\tau)$ the homogeneous coordinate system on $T^*\bR$.
Define the maps
\begin{gather}
\tilde{q}_1,\tilde{q}_2,s_\bR \colon M \times \bR \times \bR \lto M \times \bR, \\
\notag \tilde{q}_1(x,t_1,t_2)=(x,t_1), \ \tilde{q}_2(x,t_1,t_2)=(x,t_2), \ s_\bR(x,t_1,t_2)=(x,t_1+t_2).
\end{gather}
If there is no risk of confusion, we simply write $s$ for $s_\bR$.
We also set
\begin{equation}
i \colon M \times \bR \to M \times \bR, \ (x,t) \longmapsto (x,-t).
\end{equation}

\begin{definition}
	For $F,G \in \Db(M \times \bR)$, one sets
	\begin{align}
	F \star G & :=Rs_!(\tilde{q}_1^{-1}F \otimes \tilde{q}_2^{-1}G), \\
	\cHom^\star(F,G) & :=R\tilde{q}_{1*} \cRHom(\tilde{q}_2^{-1}F,s^!G) \\
	& \ \simeq Rs_*\cRHom(\tilde{q}_2^{-1}i^{-1}F,\tilde{q}_1^!G).
	\end{align}
\end{definition}

Note that the functor $\star$ is a left adjoint to $\cHom^\star$.

The functor
\begin{equation}
\bfk_{M \times [0,+\infty)} \star (\ast) \colon \Db(M \times \bR) \lto \Db(M \times \bR)
\end{equation}
defines a projector on the left orthogonal ${}^\perp \Db_{\{\tau \le 0\}}(M \times \bR)$.
Similarly, the functor 
\begin{equation}
\cHom^\star(\bfk_{M \times [0,+\infty)}, \ast) \colon \Db(M \times \bR) \lto \Db(M \times \bR)
\end{equation}
defines a projector on the right orthogonal $\Db_{\{\tau \le 0\}}(M \times \bR)^\perp$.
By using these projectors, Tamarkin proved that the localized category $\Db(M \times \bR;\{\tau >0\})$ is equivalent to both the left orthogonal ${}^\perp \Db_{\{\tau \le 0\}}(M \times \bR)$ and the right orthogonal $\Db_{\{\tau \le 0\}}(M \times \bR)^\perp$:
\begin{gather}
P_l:=\bfk_{M \times [0,+\infty)} \star (\ast) \colon \Db(M \times \bR;\{\tau >0\}) \simto {}^\perp \Db_{\{\tau \le 0\}}(M \times \bR), \\
\notag P_r:=\cHom^\star(\bfk_{M \times [0,+\infty)}, \ast) \colon \Db(M \times \bR;\{\tau >0\}) \simto \Db_{\{\tau \le 0\}}(M \times \bR)^\perp.
\end{gather}
Note also the inclusion ${}^\perp \Db_{\{\tau \le 0\}}(M \times \bR), \Db_{\{\tau \le 0\}}(M \times \bR)^\perp \subset \Db_{\{\tau \ge 0 \}}(M \times \bR)$.
We set $\Omega_+:=\{\tau >0\} \subset T^*(M \times \bR)$ and define the map
\begin{equation}
\begin{split}
\xymatrix@R=10pt{
	\rho \colon \Omega_+ \ar[r] & T^*M \\
	\ (x,t;\xi,\tau) \ar@{|->}[r] \ar@{}[u]|-{\hspace{7pt} \rotatebox{90}{$\in$}} & (x;\xi/\tau). \ar@{}[u]|-{\rotatebox{90}{$\in$}}
}
\end{split}
\end{equation}

\begin{definition}
	One defines 
	\begin{align}
		\cD(M)&:=\Db(M \times \bR;\Omega_+)\\
		\notag &\simeq {}^\perp \Db_{\{\tau \le 0\}}(M \times \bR) \simeq \Db_{\{\tau \le 0\}}(M \times \bR)^\perp.
	\end{align}
	For a compact subset $A$ of $T^*M$, one also defines a full subcategory $\cD_A(M)$ of $\cD(M)$ by
	\begin{equation}
	\cD_A(M):=
	\Db_{\rho^{-1}(A)}(M \times \bR;\Omega_+).
	\end{equation}
\end{definition}

For $F \in \cD(M)$, we take the canonical representative 
\begin{equation}
    P_l(F) \in {}^\perp \Db_{\{\tau \le 0\}}(M \times \bR)
\end{equation}
unless otherwise specified.
For a compact subset $A$ of $T^*M$ and $F \in \cD_A(M)$, the canonical representative $P_l(F) \in {}^\perp \Db_{\{\tau \le 0\}}(M \times \bR)$ satisfies $\MS(P_l(F)) \subset \overline{\rho^{-1}(A)}$.
Note also that if $F \in {}^\perp \Db_{\{\tau \le 0\}}(M \times \bR)$ then 
\begin{equation}
    \cHom^\star(F,G) \in \Db_{\{\tau \le 0\}}(M \times \bR)^\perp.
\end{equation}
Thus $\cHom^\star$ induces an internal Hom functor $\cHom^\star \colon \cD(M)^{\mathrm{op}} \times \cD(M) \to \cD(M)$.

\begin{remark}\label{rem:pushD}
	Let $f \colon M \to N$ be a morphism of manifolds and set $\tl{f}:=f \times \id_\bR \colon M \times \bR \to N \times \bR$.
	Then, for $F \in {}^\perp \Db_{\{\tau \le 0 \}}(M \times \bR)$ we have 
    \begin{equation}
        R\tl{f}_!F \in {}^\perp \Db_{\{\tau \le 0 \}}(N \times \bR).
    \end{equation}
	Similarly, for $G \in \Db_{\{\tau \le 0 \}}(M \times \bR)^\perp$ we have 
	\begin{equation}
	    R\tl{f}_*G \in \Db_{\{\tau \le 0 \}}(N \times \bR)^\perp.
	\end{equation}
	In other words, the morphism $f$ induces functors $\cD(M) \to \cD(N)$.
\end{remark}

\begin{proposition}[{\cite[Lemma~4.18]{GS14}}]\label{prp:morD}
	For $F,G \in \cD(M)$, there is an isomorphism
	\begin{equation}
	\Hom_{\cD(M)}(F,G) \simeq
	H^0 \RG_{M \times [0,+\infty)}(M \times \bR;\cHom^\star(F,G)).
	\end{equation}
\end{proposition}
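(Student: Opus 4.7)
The plan is to reduce the Hom computation in $\cD(M)$ to a chain of standard adjunctions.

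First, I exploit the realization $\cD(M)\simeq{}^\perp\Db_{\{\tau\le 0\}}(M\times\bR)$ through the projector $P_l(-)=\bfk_{M\times[0,+\infty)}\star(-)$ to rewrite
\begin{equation}
\Hom_{\cD(M)}(F,G)\simeq \Hom_{\Db(M\times\bR)}(P_l F,\,G)
\end{equation}
for arbitrary representatives $F,G\in\Db(M\times\bR)$. This uses the distinguished triangle $P_l G \lto G \lto C \toone$ with $C\in\Db_{\{\tau\le 0\}}(M\times\bR)$: since $P_l F\in {}^\perp\Db_{\{\tau\le 0\}}(M\times\bR)$, applying $\Hom_{\Db(M\times\bR)}(P_l F,-)$ to that triangle identifies $\Hom_{\Db(M\times\bR)}(P_l F,P_l G)$ with $\Hom_{\Db(M\times\bR)}(P_l F,G)$.

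Second, I invoke the adjunction between $\star$ and $\cHom^\star$. A routine chaining of $(Rs_!\dashv s^!)$, the tensor--Hom adjunction, and $(\tilde q_1^{-1}\dashv R\tilde q_{1*})$ yields
\begin{equation}
\Hom_{\Db(M\times\bR)}(F_1\star F_2,\,G)\simeq \Hom_{\Db(M\times\bR)}(F_1,\,\cHom^\star(F_2,G)).
\end{equation}
Specializing to $F_1=\bfk_{M\times[0,+\infty)}$, $F_2=F$ and combining with the first step gives
\begin{equation}
\Hom_{\cD(M)}(F,G)\simeq \Hom_{\Db(M\times\bR)}(\bfk_{M\times[0,+\infty)},\cHom^\star(F,G)).
\end{equation}

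Finally, by the definitions of $\RG_Z$ and of relative cohomology, for any $H\in\Db(M\times\bR)$ and $Z=M\times[0,+\infty)$ one has $\Hom_{\Db(M\times\bR)}(\bfk_Z,H)\simeq H^0\RG(M\times\bR;\cRHom(\bfk_Z,H))=H^0\RG_Z(M\times\bR;H)$. Applied to $H=\cHom^\star(F,G)$, this yields the claimed isomorphism. The whole argument is a routine bookkeeping of adjunctions; the only step worth pausing over is the first reduction, which is exactly what the left orthogonal realization of $\cD(M)$ is designed to afford.
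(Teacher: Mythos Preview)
The paper does not supply a proof of this proposition; it is stated with a citation to \cite[Lemma~4.18]{GS14}. Your argument is correct and is essentially the standard one: it unwinds the definition of $P_l$ as $\bfk_{M\times[0,+\infty)}\star(-)$, uses the $(\star,\cHom^\star)$-adjunction already recorded in the paper, and finishes with the identification $\Hom(\bfk_Z,H)\simeq H^0\RG_Z(M\times\bR;H)$. There is nothing to compare against beyond noting that this is exactly the computation behind the cited lemma in \cite{GS14}. One small remark: your formulation proves the isomorphism for \emph{arbitrary} representatives $F,G\in\Db(M\times\bR)$, while the statement in the paper is phrased under the convention that $F,G$ are already the canonical representatives $P_lF,P_lG$; specializing your formula to those representatives gives the statement verbatim.
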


The following separation theorem was proved by Tamarkin~\cite{Tamarkin}.

\begin{theorem}[{\cite[Theorem~3.2, Lemma~3.8]{Tamarkin} and \cite[Theorem~4.28]{GS14}}]\label{thm:separation}
	Let $A$ and $B$ be compact subsets of $T^*M$ and assume that $A \cap B=\emptyset$.
	Denote by $q_{\bR} \colon M \times \bR \to \bR$ the second projection.
	Then for any $F \in \cD_A(M)$ and $G \in \cD_B(M)$, one has $R{q_{\bR}}_* \cHom^\star(F,G) \simeq 0$.
	In particular, for any $F \in \cD_A(M)$ and $G \in \cD_B(M)$, one has $\Hom_{\cD(M)}(F,G) \simeq 0$.
\end{theorem}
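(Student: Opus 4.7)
The plan is to establish the stronger statement $R{q_\bR}_* \cHom^\star(F, G) \simeq 0$; the vanishing of $\Hom_{\cD(M)}(F, G)$ will then follow from \pref{prp:morD} via the adjunction $\Hom(\bfk_{M \times [0, +\infty)}, \cHom^\star(F,G)) \simeq \Hom(\bfk_{[0, +\infty)}, R{q_\bR}_* \cHom^\star(F,G))$. I first replace $F$ and $G$ by their canonical representatives $P_l F, P_l G \in {}^\perp \Db_{\{\tau \le 0\}}(M \times \bR)$, so that $\MS(F) \subset \overline{\rho^{-1}(A)}$ and $\MS(G) \subset \overline{\rho^{-1}(B)}$. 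Both microsupports lie in $\{\tau \ge 0\}$, and since $A$ and $B$ are compact, the ratio $\xi/\tau$ stays bounded along $\rho^{-1}(A)$ and $\rho^{-1}(B)$, so the closures meet $\{\tau = 0\}$ only inside the zero section of $T^*(M \times \bR)$.

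The heart of the argument is a two-stage microsupport estimate. Setting $K := \cHom^\star(F, G) = R\tilde q_{1*} \cRHom(\tilde q_2^{-1} F, s^! G)$, I first apply \pref{prp:SStenshom}(ii) to $\cRHom(\tilde q_2^{-1} F, s^! G)$ (the transversality condition follows from $\tau \ge 0$ on both factors combined with the boundary behavior described above), and then \pref{thm:operations}(i) to $R\tilde q_{1*}$, obtaining
\[
\MS(K) \subset \{(x, t_1; \xi_G - \xi_F, \tau) \mid \exists t_2,\ (x, t_2; \xi_F, \tau) \in \MS(F),\ (x, t_1 + t_2; \xi_G, \tau) \in \MS(G)\}.
\]
A second application of \pref{thm:operations}(i), now to $R{q_\bR}_*$, shows that any $(t_1; \tau) \in \MS(R{q_\bR}_* K)$ with $\tau > 0$ would force $\xi_F = \xi_G$ for some $x$ and $t_2$, making $\rho(x, t_2; \xi_F, \tau) = (x; \xi_F/\tau)$ a common element of $A$ and $B$. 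Since $A \cap B = \emptyset$, no such $\tau > 0$ occurs, and hence $\MS(R{q_\bR}_* K) \subset T^*_\bR \bR$.

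To conclude, observe that for any $H \in \Db_{\{\tau \le 0\}}(\bR)$ one has $\MS(q_\bR^{-1} H) \subset \{\tau \le 0\}$, so the adjunction $\Hom(H, R{q_\bR}_* K) \simeq \Hom(q_\bR^{-1} H, K)$ combined with $K \in \Db_{\{\tau \le 0\}}(M \times \bR)^\perp$ yields $R{q_\bR}_* K \in \Db_{\{\tau \le 0\}}(\bR)^\perp$. The microsupport bound just established also places $R{q_\bR}_* K$ inside $\Db_{\{\tau \le 0\}}(\bR)$, so this object is right-orthogonal to itself, which forces $R{q_\bR}_* K \simeq 0$.

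The step I expect to be the main obstacle is the rigorous verification of the microsupport computation in the second paragraph. The canonical representatives $P_l F, P_l G$ are generally unbounded in the $\bR$-direction, so the properness hypothesis of \pref{thm:operations}(i) needs to be verified using the constraint $\Supp K \subset (\pi(A) \cap \pi(B)) \times \bR$; simultaneously, the transversality condition in \pref{prp:SStenshom}(ii) has to be checked along the $\tau = 0$ boundary of $\overline{\rho^{-1}(A)}$ and $\overline{\rho^{-1}(B)}$, where the compactness of $A$ and $B$ plays the essential role of confining limit directions to the zero section.
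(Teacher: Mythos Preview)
The paper does not give its own proof of this theorem; it is quoted from \cite{Tamarkin} and \cite{GS14} as background. So there is no in-paper argument to compare against, and your outline is essentially the strategy of \cite[Theorem~4.28]{GS14}: bound the microsupport of $\cHom^\star(F,G)$, push forward along $q_\bR$, and conclude by showing the result lies in both $\Db_{\{\tau\le 0\}}(\bR)$ and its right orthogonal.

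There is, however, a genuine technical gap in your second paragraph, and it is not the one you flag. The properness needed for $R{q_\bR}_*$ is fine: $\Supp K\subset(\pi_M(A)\cap\pi_M(B))\times\bR$ with $\pi_M(A)\cap\pi_M(B)$ compact makes $q_\bR$ proper on $\Supp K$. The problem is one step earlier, when you apply \pref{thm:operations}(i) to $R\tilde q_{1*}$ in order to obtain your displayed bound for $\MS(K)$. The support of $\cRHom(\tilde q_2^{-1}F,s^!G)$ is contained in $(\pi_M(A)\cap\pi_M(B))\times\bR_{t_1}\times\bR_{t_2}$, and the fibre of $\tilde q_1$ over a point $(x,t_1)$ is the whole $t_2$-line; nothing in the hypotheses forces this to be compact, so \pref{thm:operations}(i) does not apply directly. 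In \cite{GS14} this is exactly the content of their technical lemmas on the $\star$-operations (see the results you would cite as \cite[Lemma~4.7, Proposition~4.13]{GS14}, which the present paper also invokes in the proof of \pref{prp:propertydD}(iii)); they give the needed microsupport bound for $\cHom^\star$ under the hypothesis that $\MS(F),\MS(G)\subset\{\tau\ge 0\}$ with the correct behaviour at $\tau=0$, bypassing naive properness. Once that bound is in hand, the rest of your argument (the $A\cap B=\emptyset$ step, the orthogonality step) goes through as written.
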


\subsection{Sheaf quantization of Hamiltonian isotopies (\cite{GKS})}

We recall a result of Guillermou--Kashiwara--Schapira~\cite{GKS}, which asserts the existence of a sheaf whose microsupport coincides with the conified graph of a Hamiltonian isotopy.
The sheaf is called a sheaf quantization of the Hamiltonian isotopy.
Using sheaf quantization of Hamiltonian isotopies, we can define Hamiltonian deformations in Tamarkin's category $\cD(M)$.

Let $I$ be an open interval in $\bR$ containing $0$ and $\phi^H=(\phi^H_s)_{s \in I} \colon T^*M \times I \to T^*M$ be a Hamiltonian isotopy associated with a compactly supported Hamiltonian function $H \colon T^*M \times I \to \bR$.
Note that the Hamiltonian vector field is defined by $d\alpha_{T^*M}(X_{H_s},\ast)=-dH_{s}$ and $\phi^H$ is the identity for $s=0$.
One can conify $\phi^H$ and construct $\wh{\phi}$ such that $\wh{\phi}$ lifts $\phi^H$ as follows.
Define $\wh{H} \colon T^*M \times \rT \bR \times I \to \bR$ by  $\wh{H}_s(x,t;\xi,\tau):=\tau \cdot H_s(x;\xi/\tau)$.
Note that $\wh{H}$ is homogeneous of degree $1$, that is,  $\wh{H}_s(x,t;c\xi,c\tau)=c \cdot \wh{H}_s(x,t;\xi,\tau)$ for any $c \in \bR_{>0}$.
The Hamiltonian isotopy $\wh{\phi} \colon T^*M \times \rT \bR \times I \to T^*M \times \rT \bR$ associated with $\wh{H}$
makes the following diagram commute (recall that we have set $\Omega_+=\{\tau>0\} \subset T^*(M \times \bR)$ and $\rho \colon \Omega_+ \to T^*M, (x,t;\xi,\tau) \mapsto (x;\xi/\tau)$):
\begin{equation}\label{diag:homog}
\begin{split}
\xymatrix{
	\Omega_+ \times I \ar[r]^-{\wh{\phi}} \ar[d]_-{\rho \times \id} & \Omega_+ \ar[d]^-{\rho} \\
	T^*M \times I \ar[r]_-{\phi^H} & T^*M.
}
\end{split}
\end{equation}
Moreover there exists a $C^\infty$-function $u \colon T^*M \times I \to \bR$ such that
\begin{equation}
\wh{\phi}_s(x,t;\xi,\tau)
=
(x',t+u_s(x;\xi/\tau);\xi',\tau),
\end{equation}
where $(x';\xi'/\tau)=\phi^H_s(x;\xi/\tau)$.
By construction, $\wh{\phi}$ is a homogeneous Hamiltonian isotopy: $\wh{\phi}_s(x,t;c\xi,c\tau)=c \cdot \wh{\phi}_s(x,t;\xi,\tau)$ for any $c \in \bR_{>0}$.
See \cite[Subsection~A.3]{GKS} for more details.
We define a conic Lagrangian submanifold $\Lambda_{\wh{\phi}} \subset T^*M \times \rT \bR \times  T^*M \times \rT \bR \times T^*I$ by
\begin{equation}\label{eq:deflambdahatphi}
	\resizebox{\textwidth}{!}{$\Lambda_{\wh{\phi}}
:=
\left\{
\left(
\wh{\phi}_s(x,t;\xi,\tau), (x,t;-\xi,-\tau), (s;-\wh{H}_s \circ \wh{\phi}_s(x,t;\xi,\tau)) \right)
\; \middle| \;
\begin{aligned}
(x;\xi) & \in T^*M,  \\
(t;\tau) & \in \rT \bR, \\
s & \in I
\end{aligned}
	\right\}.$}
\end{equation}
By construction, we have
\begin{equation}
\wh{H_s} \circ \wh{\phi}_s(x,t;\xi,\tau)
=
\tau \cdot (H_s \circ \phi^H_s(x;\xi/\tau)).
\end{equation}
Note also that
\begin{align}
\notag \Lambda_{\wh{\phi}} \circ T^*_sI
&=
\left\{\left(\wh{\phi}_s(x,t;\xi,\tau), (x,t;-\xi,-\tau)\right) \; \middle| \; (x,t;\xi,\tau) \in T^*M \times \rT \bR \right\} \\
& \subset T^*M \times \rT \bR \times T^*M \times \rT\bR
\end{align}
for any $s \in I$ (see \eqref{eq:compset} for the definition of $A \circ B$).

\begin{theorem}[{\cite[Theorem~4.3]{GKS}}]\label{thm:GKS}
	In the preceding situation, there exists a unique object $K \in \Db(M \times \bR \times M \times \bR \times I)$ satisfying the following conditions:
	\begin{enumerate}
		\renewcommand{\labelenumi}{$\mathrm{(\arabic{enumi})}$}
		\item $\mathring{\MS}(K) \subset \Lambda_{\wh{\phi}}$,
		\item $K|_{M \times \bR \times M \times \bR \times \{0\}} \simeq \bfk_{\Delta_{M \times \bR}}$, where $\Delta_{M \times \bR}$ is the diagonal of $M \times \bR \times M \times \bR$.
	\end{enumerate}
	Moreover both projections $\Supp(K) \to M \times \bR \times I$ are proper. 
\end{theorem}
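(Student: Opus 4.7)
The plan is to establish uniqueness first, then construct $K$ by a local-to-global procedure and check properness at the end. For uniqueness, suppose $K_1,K_2\in\Db(M\times\bR\times M\times\bR\times I)$ both satisfy $(1)$ and $(2)$; the goal is to produce a canonical isomorphism $K_1\simto K_2$ extending the identity at $s=0$. The strategy is a propagation argument along the parameter $s$, using the microlocal Morse lemma \pref{prp:microlocalmorse} applied to the projection to $I$. A direct comparison via $\cRHom(K_1,K_2)$ is delicate, since $\MS(K_1)$ and $\MS(K_2)$ both lie on the non-zero-section Lagrangian $\Lambda_{\wh\phi}$ and so the naive bound \pref{prp:SStenshom}(ii) does not apply. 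The standard way around this is to recast the comparison in terms of a composition of kernels $K_2\circ K_1^{-1}$, where $K_1^{-1}$ is an analogous sheaf quantization of the inverse isotopy $\wh\phi^{-1}$ on a sub-interval; the microsupport of such a composition, controlled by \pref{prp:SScomp}, collapses onto the conormal of the diagonal of $M\times\bR$ with no positive $ds$-covector, and propagation along $I$ then yields the desired isomorphism.

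For existence, cover $I$ by short open intervals $J_1,\dots,J_N$ on which, after composing with a fixed $\wh\phi_{s_0}^{-1}$, the isotopy is $C^1$-close to the identity. On each such $J_j$ the conic Lagrangian $\Lambda_{\wh\phi}|_{J_j}$ admits a globally defined homogeneous generating function $\Psi_j$ of degree one in the formal covariable, obtained by integrating $\wh H$ along trajectories. The local sheaf $K^{(j)}$ is then defined as the constant sheaf on a closed subset $Z_j\subset M\times\bR\times M\times\bR\times J_j$ cut out by an inequality built from $\Psi_j$, normalized so that $K^{(j)}|_{s=s_0}\simeq\bfk_{\Delta_{M\times\bR}}$. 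A direct computation, using the microsupport formulas for indicator sheaves of smooth closed domains, yields $\mathring{\MS}(K^{(j)})=\Lambda_{\wh\phi}|_{J_j}$, and the compactness of $\Supp(H)$ ensures that both projections of $\Supp(K^{(j)})$ to $M\times\bR\times J_j$ are proper.

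On each overlap $J_j\cap J_{j+1}$, uniqueness yields a canonical isomorphism $K^{(j)}|_{J_j\cap J_{j+1}}\simto K^{(j+1)}|_{J_j\cap J_{j+1}}$; uniqueness applied on triple overlaps gives the cocycle condition, so the $K^{(j)}$ glue into a global $K$ on $M\times\bR\times M\times\bR\times I$ satisfying $(1)$ and $(2)$, and properness transfers through the gluing. The main obstacle I anticipate is the uniqueness step: the composition-of-kernels trick must be set up carefully enough that \pref{prp:SScomp} actually applies (in particular, verifying the non-characteristic hypothesis on the transversality of the relevant microsupports with the zero section of $T^*(M\times\bR)$), and the microlocal Morse propagation must be carried out over the full interval $I$ rather than just over the small sub-intervals where the local existence is available.
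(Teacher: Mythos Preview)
The paper does not prove this theorem: it is quoted verbatim from \cite[Theorem~4.3]{GKS} as background material, with no argument given beyond the brief remark on boundedness of $K$ that follows the statement. So there is no ``paper's own proof'' to compare against.

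That said, the strategy you sketch---uniqueness by a propagation argument in $s$ using a kernel composition $K_2\circ K_1^{-1}$ to collapse the microsupport, local existence on short sub-intervals via a homogeneous generating function, and gluing by the uniqueness on overlaps---is precisely the architecture of the original proof in \cite{GKS}. Your outline is accurate at the level of a plan; the places you flag as delicate (verifying the hypotheses of \pref{prp:SScomp} for the composition, and running the microlocal Morse propagation over all of $I$) are indeed where the work lies in \cite{GKS}, but there is nothing in this paper itself for you to match against.
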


\begin{remark}
	In \cite[Theorem~4.3]{GKS}, it was proved that $K|_{M \times \bR \times M \times \bR \times J}$ is a bounded object for any relatively compact interval $J$ of $I$.
	Since we assume that $H$ has compact support, we find that $K \in \Db(M \times \bR \times M \times \bR \times I)$.
\end{remark}

The object $K$ is called the \emph{sheaf quantization} of $\wh{\phi}$ or associated with $\phi^H$.
Set $K_{s}:=K|_{M \times \bR \times M \times \bR \times \{s\}} \in \Db(M \times \bR \times M \times \bR)$.
Note that $\mathring{\MS}(K_s) \subset \Lambda_{\wh{\phi}} \circ T^*_sI$.
It is also proved by Guillermou--Schapira~\cite[Proposition~4.29]{GS14} that the composition with $K_s$ defines a functor
\begin{equation}
K_s \circ (\ast) \colon \cD(M) \lto \cD(M).
\end{equation}
Moreover, for $F \in \cD_A(M)$ and any $s \in I$, we have $K_s \circ F \simeq (K \circ F)|_{M \times \{s\}} \in \cD_{\phi^H_{s}(A)}(M)$.
In fact, by \cref{prp:SScomp} and \eqref{diag:homog} we get
\begin{align}
\MS(K_s \circ F) \cap \Omega_+
&\subset
(\Lambda_{\wh{\phi}} \circ T^*_sI) \circ \rho^{-1}(A)\\
\notag &=
\wh{\phi}_s(\rho^{-1}(A))
\subset
\rho^{-1}(\phi^H_s(A)).
\end{align}
In other words, the composition $K_s \circ (\ast)$ induces a functor $\cD_{A}(M) \to \cD_{\phi^H_s(A)}(M)$ for any compact subset $A$ on $T^*M$.

\section[Pseudo-distance on Tamarkin's category and displacement energy]{Pseudo-distance on Tamarkin's category and \\ displacement energy}\label{sec:displacement-energy}

In this section, we introduce a pseudo-distance $d_{\cD(M)}$ on Tamarkin's category $\cD(M)$.
We prove that the distance between an object and its Hamiltonian deformation via sheaf quantization is less than or equal to the Hofer norm of the Hamiltonian function.
Using the result, we also show a quantitative version of Tamarkin's non-displaceability theorem, which gives a lower bound of the displacement energy.

\subsection{Complements on torsion objects}

Torsion objects were introduced by Tamarkin~\cite{Tamarkin} and the category of torsion objects was systematically studied by Guillermou--Schapira~\cite{GS14}.
In this subsection, we introduce the notion of $c$-torsion for $c \in \bR_{\ge 0}$, which we will use to estimate the displacement energy.
Note that the results in this subsection are essentially due to Guillermou--Schapira~\cite{GS14}.

First we recall the microlocal cut-off lemma in a general setting.
Let $V$ be a finite-dimensional real vector space and $\gamma$ be a closed convex cone with $0 \in \gamma$ in $V$.
Define  the maps
\begin{gather}
 \tilde{q}_1,\tilde{q}_2,s_V \colon M \times V \times V \lto M \times V, \\
\notag \tilde{q}_1(x,v_1,v_2)=(x,v_1), \ \tilde{q}_2(x,v_1,v_2)=(x,v_2), \ s_V(x,v_1,v_2)=(x,v_1+v_2).
\end{gather}
For $F \in \Db(M \times V)$, the canonical morphism $\bfk_{M \times \gamma} \to \bfk_{M \times \{0\}}$ induces the morphism
\begin{equation}
R{s_V}_* (\tilde{q}_1^{-1} \bfk_{M \times \gamma} \otimes \tilde{q}_2^{-1}F)
\lto 
R{s_V}_* (\tilde{q}_1^{-1} \bfk_{M \times \{0\}} \otimes \tilde{q}_2^{-1}F)
\simeq F.
\end{equation}
The following is called the microlocal cut-off lemma due to Kashiwara--Schapira~\cite[Proposition~5.2.3]{KS90}, which is reformulated by Guillermou--Schapira~\cite[Proposition~4.9]{GS14}.
For a cone $\gamma$ with $0 \in \gamma$ in $V$, we define its polar cone $\gamma^\circ \subset V^*$ by
\begin{equation}
\gamma^\circ 
:=
\{ w \in V^* \mid \text{$\langle w, v \rangle \ge 0$ for any $v \in \gamma$} \}.
\end{equation}
We also identify $T^*V$ with $V \times V^*$.

\begin{proposition}\label{prp:microlocalcutoff}
	Let $V$ be a finite-dimensional real vector space and $\gamma$ be a closed convex cone with $0 \in \gamma$ in $V$.
	Then, for $F \in \Db(M \times V)$, $\MS(F) \subset T^*M \times V \times \gamma^\circ$ if and only if the morphism $R{s_V}_* (\tilde{q}_1^{-1} \bfk_{M \times \gamma} \otimes \tilde{q}_2^{-1}F) \to F$ is an isomorphism.
\end{proposition}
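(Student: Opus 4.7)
Set $\Phi_\gamma(F):=R{s_V}_*(\tilde q_1^{-1}\bfk_{M\times\gamma}\otimes\tilde q_2^{-1}F)$ and let $\eta_F\colon \Phi_\gamma(F)\to F$ be the morphism of the statement. The plan is to handle the two directions separately, with the implication ``microsupport bound $\Rightarrow$ isomorphism'' as the substantive half.

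\emph{Easy direction.} Using \pref{thm:operations}(i) for $Rs_{V*}$ and \pref{prp:SStenshom}(i) for the tensor product, I would show that $\MS(\Phi_\gamma(F))\subset T^*M\times V\times\gamma^\circ$ for every $F$. The key input is the bound $\MS(\bfk_{M\times\gamma})\subset T^*_MM\times V\times\gamma^\circ$, which follows from $\bfk_{M\times\gamma}\simeq\bfk_M\boxtimes\bfk_\gamma$ together with a direct calculation of $\MS(\bfk_\gamma)$ at interior and boundary points of the convex cone $\gamma$ (as in the example after \pref{def:microsupport}). The cone property $\gamma+\gamma\subset\gamma$ and closedness of $\gamma$ yield both the non-characteristic assumption for the tensor and the properness of $s_V$ on the relevant support. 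In particular, if $\eta_F$ is an isomorphism, $F$ inherits this microsupport bound.

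\emph{Hard direction.} Assume $\MS(F)\subset T^*M\times V\times\gamma^\circ$ and complete $\eta_F$ to a distinguished triangle $\Phi_\gamma(F)\to F\to C\toone$. Applying $R{s_V}_*(\tilde q_1^{-1}(-)\otimes\tilde q_2^{-1}F)$ to the short exact sequence
\begin{equation*}
0\to\bfk_{M\times(\gamma\setminus\{0\})}\to\bfk_{M\times\gamma}\to\bfk_{M\times\{0\}}\to 0
\end{equation*}
identifies $C\simeq \Phi_{\gamma\setminus\{0\}}(F)[1]$, where $\Phi_{\gamma\setminus\{0\}}$ is defined analogously with $\bfk_{M\times(\gamma\setminus\{0\})}$ in place of $\bfk_{M\times\gamma}$. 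By the triangle inequality and the easy direction, $\MS(C)\subset T^*M\times V\times\gamma^\circ$. So it suffices to prove $C\simeq 0$, equivalently the stalk vanishing of $\Phi_{\gamma\setminus\{0\}}(F)$ at every point.

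\emph{Morse step and main obstacle.} The stalk vanishing would be established via the microlocal Morse lemma (\pref{prp:microlocalmorse}) applied to the linear function $\varphi(x,v):=\langle\eta_0,v\rangle$ for suitably chosen $\eta_0\in V^*\setminus\gamma^\circ$. Since $\MS(C)\subset T^*M\times V\times\gamma^\circ$, one has $d\varphi\notin\MS(C)$ identically, so sections of $C$ propagate along the sublevel-set exhaustion $\{\varphi<b\}$. Combining this with the control $\Supp(C)\subset M\times((\gamma\setminus\{0\})+\Supp F)$ coming from the definition of $\Phi_{\gamma\setminus\{0\}}(F)$---which forces sections to escape in a controlled direction---one concludes that the stalks of $C$ vanish. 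The main obstacle is rigorously securing the properness of $\varphi$ on the (generally non-compact) support, typically via a compact exhaustion of $\Supp F$ or a devissage to the compactly supported case. A secondary difficulty is the case of non-salient $\gamma$ (with $\gamma\cap(-\gamma)\neq\{0\}$), which must be reduced to a salient cone by the K\"unneth splitting $V=(\gamma\cap(-\gamma))\oplus V'$.
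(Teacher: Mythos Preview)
The paper does not prove this proposition at all: it is stated as the microlocal cut-off lemma and attributed to \cite[Proposition~5.2.3]{KS90} (reformulated in \cite[Proposition~4.9]{GS14}), with no argument given. So there is nothing in the paper to compare your proof against.

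As for the proposal itself, the overall architecture---identify the cone of $\eta_F$ with $\Phi_{\gamma\setminus\{0\}}(F)[1]$ and kill it by a propagation/Morse argument---is indeed the shape of the classical proof in \cite{KS90}. However, your ``easy direction'' is not as easy as you suggest: the map $s_V$ is essentially never proper on $\Supp(\tilde q_1^{-1}\bfk_{M\times\gamma}\otimes\tilde q_2^{-1}F)$ for unbounded $\gamma$ and general $F$, so \pref{thm:operations}(i) does not apply to give $\MS(\Phi_\gamma(F))\subset T^*M\times V\times\gamma^\circ$ directly. The standard route avoids this by computing stalks of the cone $C$ explicitly (as sections of $F$ over translates of $\gamma$, modulo the stalk at the apex) and then using the microsupport hypothesis on $F$ itself to show these vanish---not by first bounding $\MS(\Phi_\gamma(F))$. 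Your Morse step has the same properness defect: $\varphi(x,v)=\langle\eta_0,v\rangle$ is not proper on $\Supp(C)$ in general, so \pref{prp:microlocalmorse} cannot be invoked as stated. You correctly flag this as the main obstacle, but the fix is not a routine compact exhaustion; in \cite{KS90} it is handled by a local (stalkwise) argument rather than a global Morse-theoretic one.
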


If $\Int(\gamma) \neq \emptyset$, then $\tilde{q}_1^{-1} \bfk_{M \times \gamma} \simeq \cRHom(\bfk_{M \times \Int(\gamma) \times V},\bfk_{M \times V \times V})$.
Hence, by \cref{prp:SStenshom}(ii), we have
\begin{equation}
R{s_V}_* (\tilde{q}_1^{-1} \bfk_{M \times \gamma} \otimes \tilde{q}_2^{-1}F)
\simeq
R{s_V}_* \RG_{M \times \Int(\gamma) \times V}(\tilde{q}_2^{-1}F).
\end{equation}

Now we return to the case $V=\bR$ and $\gamma=[0,+\infty)$.
Let $F \in \Db(M \times \bR)$.
Then, by \cref{prp:microlocalcutoff}, $F \in \Db_{\{\tau \ge 0\}}(M \times \bR)$ if and only if  \begin{equation}\label{eq:projectionF}
Rs_* (\tilde{q}_1^{-1} \bfk_{M \times [0,+\infty)} \otimes \tilde{q}_2^{-1}F)
\simto F.
\end{equation}
For $c \in \bR$, we define the translation map
\begin{equation}
T_c \colon M \times \bR \to M \times \bR, \quad (x,t) \longmapsto (x,t+c).
\end{equation}
For $F \in \Db_{\{\tau \ge 0\}}(M \times \bR)$, by \eqref{eq:projectionF}, we have
\begin{equation}
Rs_* (\tilde{q}_1^{-1} \bfk_{M \times [c,+\infty)} \otimes \tilde{q}_2^{-1}F)
\simto 
{T_c}_*F
\end{equation}
for any $c \in \bR$.
Hence, for $c \le d$, the canonical morphism $\bfk_{M \times [c,+\infty)} \to \bfk_{M \times [d,+\infty)}$ induces a morphism of functors from $\Db_{\{\tau \ge 0\}}(M \times \bR)$ to $\Db_{\{\tau \ge 0\}}(M \times \bR)$:
\begin{equation}
\tau_{c,d} \colon {T_c}_* \lto {T_d}_*.
\end{equation}

\begin{definition}[{cf.\ \cite{Tamarkin}}]
	Let $c \in \bR_{\ge 0}$.
	An object $F \in \Db_{\{\tau \ge 0\}}(M \times \bR)$ is said to be \emph{$c$-torsion} if the morphism $\tau_{0,c}(F) \colon F \to {T_c}_* F$ is zero.
\end{definition}

Note that a $c$-torsion object is $c'$-torsion for any $c' \ge c$.
Recall also that the category $\cD(M)=\Db(M \times \bR;\{\tau>0\})$ is regarded as a full subcategory of $\Db_{\{\tau \ge 0\}}(M \times \bR)$ via the projector $P_l \colon \Db(M \times \bR;\{\tau>0\}) \to {}^\perp \Db_{\{\tau \le 0 \}}(M \times \bR)$ or $P_r \colon \Db(M \times \bR;\{\tau>0\}) \to \Db_{\{\tau \le 0 \}}(M \times \bR)^\perp$.
Hence we can define $c$-torsion objects in $\cD(M)$.

Let $I$ be an open interval of $\bR$.
We recall a result on sheaves over $M \times \bR \times I$ due to Guillermou--Schapira~\cite{GS14}.
We denote by $(t;\tau)$ the homogeneous symplectic coordinate system on $T^*\bR$ and by $(s;\sigma)$ that on $T^*I$.
For $a,b \in \bR_{>0}$, we set
\begin{equation}
\gamma_{a,b}:=\{(\tau,\sigma) \in \bR^2 \mid -a \tau \le \sigma \le b \tau \} \subset \bR^2.
\end{equation}
Let $q \colon M \times \bR \times I \to M \times \bR$ be the projection.
We identify $T^*(\bR \times I)$ with $(\bR \times I) \times \bR^2$.

\begin{proposition}[{cf.\ \cite[Proposition~6.9]{GS14}}]\label{prp:torhtpy}
	Let $\cH \in \Db_{\{\tau \ge 0\}}(M \times \bR \times I)$ and $s_1<s_2$ be in $I$.
	Assume that there exist $a,b,r \in \bR_{>0}$ satisfying
	\begin{equation}
	\MS(\cH) \cap \pi^{-1}(M \times \bR \times (s_1-r,s_2+r)) \subset T^*M \times (\bR \times I) \times \gamma_{a,b}.
	\end{equation}
	Then $Rq_*(\cH_{M \times \bR \times [s_1,s_2)})$ is $(a(s_2-s_1)+\varepsilon)$-torsion and $Rq_*(\cH_{M \times \bR \times (s_1,s_2]})$ is $(b(s_2-s_1)+\varepsilon)$-torsion for any $\varepsilon \in \bR_{>0}$.
\end{proposition}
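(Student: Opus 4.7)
I prove the second assertion (slope $b$, interval $(s_1, s_2]$); the first (slope $a$, interval $[s_1, s_2)$) is analogous, using instead the shear $(x,t,s) \mapsto (x, t - as, s)$, which reduces $-a\tau \le \sigma$ to $0 \le \sigma$ and gives a propagation in the $+s$ direction. Set $c := b(s_2 - s_1) + \varepsilon$.

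The idea is to reduce the cone condition $\sigma \le b\tau$ to the simpler condition $\sigma \le 0$ via a shear, then use a microlocal propagation argument. Let $\Phi \colon M\times\bR\times I \to M\times\bR\times I$ be the diffeomorphism $\Phi(x,t,s):=(x, t+bs, s)$. The pullback $\Phi^*$ sends $dt \mapsto dt + b\,ds$, so a covector $(\xi,\tau,\sigma')$ at $\Phi(x,t,s)$ lies in $\MS(\Phi_*\cH)$ iff $(\xi,\tau,\sigma'+b\tau) \in \MS(\cH)_{(x,t,s)}$. Setting $\cH^\sharp := \Phi_*\cH$, the assumption becomes
\begin{equation*}
\MS(\cH^\sharp) \cap \pi^{-1}(M\times\bR\times(s_1-r,s_2+r)) \subset T^*M\times(\bR\times I)\times\{\tau\ge 0,\, -(a+b)\tau \le \sigma \le 0\}.
\end{equation*}

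Set $\tilde q := q\circ\Phi^{-1}\colon (x,t,s)\mapsto (x,t-bs)$. Since $\Phi$ fixes the $s$-coordinate and commutes with the $t$-translation $T_c\times\id_I$, there is a canonical isomorphism
\begin{equation*}
Rq_*(\cH_{M\times\bR\times(s_1,s_2]}) \simeq R\tilde q_*(\cH^\sharp_{M\times\bR\times(s_1,s_2]})
\end{equation*}
intertwining the two $\tau_{0,c}$ morphisms. Hence it suffices to show that $R\tilde q_*(\cH^\sharp_{M\times\bR\times(s_1,s_2]})$ is $c$-torsion.

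Now the reduced condition $\MS(\cH^\sharp) \subset \{\sigma \le 0\}$ makes the coordinate $\psi(x,t,s):=s$ non-characteristic for $\cH^\sharp$, its differential $(0,0,1)$ lying outside the cone. I next control the microsupport of the cutoff $\cH^\sharp_{M\times\bR\times(s_1,s_2]}$ via \pref{prp:SStenshom}: the contribution of $\bfk_{(s_1,s_2]}$ at $s_1$ lies in $\{\sigma\le 0\}$ (harmless) and at $s_2$ in $\{\sigma\ge 0\}$ (dealt with by a small perturbation of $\psi$, which matches the $\varepsilon$ slack). Combining \pref{prp:microlocalmorse} with the geometry of the fibers of $\tilde q$ --- lines $\{(x_0, t_0 + bs, s) : s \in I\}$ of slope $b$ in the $(t,s)$-plane --- yields a null-homotopy of $\tau_{0,c}\colon R\tilde q_*\cH^\sharp_{M\times\bR\times(s_1,s_2]} \to (T_c)_* R\tilde q_*\cH^\sharp_{M\times\bR\times(s_1,s_2]}$, proving the required torsion.

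The main obstacle is the final null-homotopy construction, which requires careful bookkeeping of the cutoff's microsupport at $s = s_2$ and matching the perturbation amount to $\varepsilon$. A cleaner alternative is to invoke \pref{prp:microlocalcutoff} applied to $\cH^\sharp$ over $V = \bR^2$ with cone $\gamma = \{\tau\ge 0,\, \sigma\le 0\}$ and polar $\gamma^\circ = \{t\ge 0,\, s\le 0\}$, expressing $\cH^\sharp$ as a convolution $\bfk_{M\times\gamma^\circ}\star\cH^\sharp$ and computing the effect of $R\tilde q_*$ and $\tau_{0,c}$ directly; this is close to the approach of \cite{GS14}.
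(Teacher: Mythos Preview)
Your shear $\Phi(x,t,s)=(x,t+bs,s)$ is set up correctly and the microsupport of $\cH^\sharp=\Phi_*\cH$ is indeed contained in $\{-(a+b)\tau\le\sigma\le 0\}$ over $(s_1-r,s_2+r)$. The trouble is that nothing is gained: since you must then push forward along $\tilde q=q\circ\Phi^{-1}$, you have merely transported the original problem into new coordinates, and the hard step---showing $\tau_{0,c}$ vanishes---remains exactly as before. The microlocal Morse lemma (\pref{prp:microlocalmorse}) produces isomorphisms between sections over sublevel sets of a function; it does not produce null-homotopies of the translation morphism $\tau_{0,c}$. Your sentence ``Combining \pref{prp:microlocalmorse} with the geometry of the fibers of $\tilde q$ \dots\ yields a null-homotopy'' is the entire content of the proposition, and it is asserted rather than proved.

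There is also a sign error in your microsupport bookkeeping: the contribution of $\bfk_{(s_1,s_2]}$ at $s=s_2$ lies in $\{\sigma\le 0\}$, not $\{\sigma\ge 0\}$ (both endpoints of a half-open interval $(s_1,s_2]$ contribute $\sigma\le 0$; it is $[s_1,s_2)$ whose endpoints both contribute $\sigma\ge 0$). So the ``obstruction at $s_2$'' you plan to perturb away does not exist, and your account of where the $\varepsilon$ comes from is wrong.

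The paper's proof does exactly what your final paragraph gestures at, but carries it through. It first replaces $I$ by $\bR$ via a diffeomorphism so that the cut-off lemma (\pref{prp:microlocalcutoff}) applies with $V=\bR^2$ and cone $\gamma_{a,b}^\circ$. It then rewrites $Rq_*(\cH_{M\times\bR\times[s_1,s_2)})$ via base change as $R\tilde s_*\cRHom(\bfk_M\boxtimes R\tilde q_!\bfk_D,\,q_2^{-1}\cH)[-1]$ for an explicit region $D\subset\bR^3$, and shows that $\tau_{0,c}$ is induced by the morphism $\bfk_{\tl T_c(D)}\to\bfk_D$. Finally, a direct fibre computation shows that $R\tilde q_!\bfk_D$ and $R\tilde q_!\bfk_{\tl T_c(D)}$ have disjoint supports once $c>a(s_2-s_1)$. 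This last step---identifying what $\tau_{0,c}$ becomes under the cut-off and proving it is zero---is the substance of the argument, and it is precisely what your proposal omits.
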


\begin{proof}
	The proof is essentially the same as that of \cite[Proposition~6.9]{GS14}.
	For the convenience of the reader, we give a detailed proof again.
	We only consider $Rq_*(\cH_{M \times \bR \times [s_1,s_2)})$ and omit the proof for the other case.

	\medskip
	\noindent (a)
	Choose a diffeomorphism $\psi \colon (s_1-r,s_2+r) \simto \bR$ satisfying $\psi|_{[s_1,s_2]}=\id_{[s_1,s_2]}$ and $d\psi(s) \ge 1$ for any $s \in (s_1-r,s_2+r)$.
	Set $\Psi:=\id_M \times \id_\bR \times \psi : M \times \bR \times (s_1-r,s_2+r) \simto M \times \bR \times \bR$ and $\cH':=\Psi_* \cH|_{M \times \bR \times (s_1-r,s_2+r)} \in \Db(M \times \bR \times \bR)$.
	Then, by the assumption on $\psi$, we have
	\begin{equation}\label{eq:reductionR}
	\MS(\cH')
	\subset
	T^*M \times (\bR \times \bR) \times \gamma_{a,b}
	\end{equation}
	and $Rq_*(\cH_{M \times \bR \times [s_1,s_2)}) \simeq Rq_*(\cH'_{M \times \bR \times [s_1,s_2)})$.
	Here $q$ in the right-hand side denotes the projection $M \times \bR \times \bR \to M \times \bR, (x,t,s) \mapsto (x,t)$ by abuse of notation.
	Therefore, replacing $\cH$ with $\cH'$, we may assume $I=\bR$ and \eqref{eq:reductionR}.

	\medskip
	\noindent (b)
	Set $V=\bR^2$ and denote by $s_V \colon M \times V \times V \to M \times V$ the addition map.
	By \cref{prp:microlocalcutoff}, we have
	\begin{equation}\label{eq:cutoff}
	R{s_V}_* \RG_{M \times \Int(\gamma_{a,b}^\circ) \times V}(\tilde{q}_2^{-1}\cH)
	\simeq
	\cH.
	\end{equation}
	Note that $\Int(\gamma_{a,b}^\circ)=\{(t,s) \in \bR^2 \mid -b^{-1} t < s < a^{-1} t \}$.
	Since 
	\begin{equation}
	    \MS(\bfk_{M \times \bR \times (s_1,s_2]}) \subset T^*_M M \times T^*_\bR \bR \times T^*\bR,
	\end{equation}
	\cref{prp:SStenshom}(ii) gives $\cH \otimes \bfk_{M \times \bR \times [s_1,s_2)} \simeq \RG_{M \times \bR \times (s_1,s_2]}(\cH)$.
	Combining with \eqref{eq:cutoff}, we obtain
	\begin{equation}
	Rq_*(\cH_{M \times \bR \times [s_1,s_2)})
	\simeq
	Rq_* R{s_V}_* \RG_{M \times D}(\tilde{q}_2^{-1}\cH),
	\end{equation}
	where $D=\Int(\gamma_{a,b}^\circ) \times V \cap \{(t,s,t',s') \mid s_1 < s+s' \le s_2 \}$.
	Consider the commutative diagram
	\begin{equation}
	\begin{split}
	\xymatrix{
		& M \times V \times V \ar[r]^-{s_V} \ar[d]^-{\id_M \times \tilde{q}} \ar[ld]_-{\tilde{q}_2} & M \times V \ar[d]^-{q} \\
		M \times V & M \times \bR \times V \ar[l]^-{q_2} \ar[r]_-{\tilde{s}} & M \times \bR,
	}
	\end{split}
	\end{equation}
	where $\tilde{q}(t,s,t',s')=(t,t',s'), q_2(x,t,t',s')=(x,t',s')$, and $\tilde{s}(x,t,t',s')=(x,t+t')$.
	By the adjunction of $(\id_M \times \tilde{q})_!$ and $(\id_M \times \tilde{q})^!$, we get
	\begin{align}
    	\notag Rq_*(\cH_{M \times \bR \times [s_1,s_2)})
    	& \simeq
    	R\tilde{s}_* (\id_M \times \tilde{q})_* \cRHom(\bfk_{M \times D}, (\id_M \times \tilde{q})^! q_2^{\,-1}\cH)[-1] \\
    		\label{eq:isomH}
    	& \simeq
    	R\tilde{s}_* \cRHom(\bfk_{M} \boxtimes R \tilde{q}_!\bfk_{D},  q_2^{\,-1}\cH)[-1].
	\end{align}
	Here, we used $\tilde{q}^! \simeq \tilde{q}^{-1}[1]$ for the first isomorphism.

	\medskip
	\noindent (c)
	Through the isomorphism \eqref{eq:cutoff}, $\tau_{0,c}(\cH)$ is induced by the canonical morphism \linebreak $\bfk_{\tl{T}_c( \Int(\gamma_{a,b}^\circ) \times V)} \to \bfk_{\Int(\gamma_{a,b}^\circ) \times V}$, where $\tl{T}_c(t,s,t',s')=(t+c,s,t',s')$.
	Moreover through \eqref{eq:isomH}, we find that $\tau_{0,c}(Rq_*(\cH_{M \times \bR \times [s_1,s_2)}))$ is induced by the morphism $\bfk_{\tl{T}_c(D)} \to \bfk_{D}$.
	In order to prove that $R \tilde{q}_!\bfk_{\tl{T}_c(D)} \to R \tilde{q}_!\bfk_{D}$ is zero morphism for $c>a(s_2-s_1)$, we will show that $R \tilde{q}_!\bfk_{D}$ and $R \tilde{q}_!\bfk_{\tl{T}_c(D)}$ have disjoint supports.
	
	\medskip
	\noindent (d)
	For a point $(t,t',s') \in \bR \times V$, $\tilde{q}^{-1}(t,t',s') \cap D=\emptyset$ if $t \le 0$ and
	\begin{equation}
	\tilde{q}^{-1}(t,t',s') \cap D
	=
	(s_1-s', s_2-s'] \cap (-b^{-1}t, a^{-1}t)
	\end{equation}
	if $t > 0$.
	This set is an empty set or a half closed interval if $t \not\in (a(s_1-s'),a(s_2-s')]$.
	Thus $\Supp(R \tilde{q}_! \bfk_{D})$ is contained in $\{(t,t',s') \mid t \in [a(s_1-s'), a(s_2-s')] \}$.
	Similarly, $\Supp(R \tilde{q}_! \bfk_{\tl{T}_c(D)})$ is contained in $\{(t,t',s') \mid t \in [a(s_1-s')+c,a(s_2-s')+c] \}$.
	Hence $\Supp(R \tilde{q}_! \bfk_{D})$ and $\Supp(R \tilde{q}_! \bfk_{\tl{T}_c(D)})$ are disjoint for $c>a(s_2-s_1)$.
\end{proof}

\subsection{Pseudo-distance on Tamarkin's category}\label{subsec:abisom}

In this subsection, we introduce a pseudo-distance on Tamarkin's category $\cD(M)$.
This enables us to discuss the relation between possibly non-torsion objects in $\cD(M)$.
Recall again that $\cD(M)$ is regarded as a full subcategory of $\Db_{\{\tau \ge 0 \}}(M \times \bR)$ via the projector $P_l$ or $P_r$.

\begin{definition}\label{def:defab}
	Let $F,G \in \Db_{\{\tau \ge 0 \}}(M \times \bR)$ and $a,b \in \bR_{\ge 0}$.
	\begin{enumerate}
		\item The pair $(F,G)$ is said to be \emph{$(a,b)$-interleaved} 
		if there exist morphisms $\alpha, \delta \colon F \to {T_a}_*G$ 
		and $\beta, \gamma \colon G \to {T_b}_*F$ such that
		\begin{enumerate}
			\renewcommand{\labelenumii}{$\mathrm{(\arabic{enumii})}$}
			\item $F \xrightarrow{\alpha} {T_a}_* G \xrightarrow{{T_a}_*\beta} {T_{a+b}}_*F$ is equal to $\tau_{0,a+b}(F) \colon F \to {T_{a+b}}_*F$,
			\item $G \xrightarrow{\gamma} {T_b}_* F \xrightarrow{{T_b}_*\delta} {T_{a+b}}_*G$ is equal to $\tau_{0,a+b}(G) \colon G \to {T_{a+b}}_*G$.
		\end{enumerate}
		\item $F$ is said to be \emph{$(a,b)$-isomorphic} to $G$ if there exist morphisms 
		$\alpha, \delta \colon F \to {T_a}_*G$ and $\beta, \gamma \colon G \to {T_b}_*F$ 
		satisfying (1), (2) in (i) and also 
		\begin{itemize}
			\item[(3)]  $\tau_{a,2a}(G) \circ \alpha=\tau_{a,2a}(G) \circ \delta$ and $\tau_{b,2b}(F) \circ \beta=\tau_{b,2b}(F) \circ \gamma$.
		\end{itemize}
	\end{enumerate}
\end{definition}

\begin{remark}\label{rem:fourmor}
	\begin{enumerate}
		\item It might seem strange that we do not add the conditions $\alpha=\delta$ and $\beta=\gamma$ in \cref{def:defab}. 
		However, if we add such conditions, there is no guarantee that \cref{lem:abisomtor} below holds.
		
		\item An $(a,b)$-isomorphism is indeed an isomorphism in the localized category $\cT(M):=\cD(M)/ \cN_{\mathrm{tor}}$, which is
		localized by the triangulated subcategory consisting of torsion objects
		(\cite[Definition~6.6]{GS14}).
		Let $F, G \in \cD(M)$.
		Then by a result of Guillermou--Schapira~\cite[Proposition~6.7]{GS14}, we have 
		\begin{equation}
		\Hom_{\cT(M)}(F,G) \simeq \varinjlim_{c \to +\infty} \Hom_{\cD(M)}(F,{T_c}_*G).
		\end{equation}
		Thus if $F$ is $(a,b)$-isomorphic to $G$ for some $a,b \in \bR_{\ge 0}$, then $F \simeq G$ in $\cT(M)$.
		All statements below hold if ``$(a,b)$-interleaved" is replaced by ``$(a,b)$-isomorphic", but we omit the proofs for simplicity.
	\end{enumerate}	
\end{remark}

The two notions we have introduced above are related to 
the notion of ``$a$-isomorphic" recently introduced by 
Kashiwara--Schapira~\cite{KS18persistent} and 
interleavings on persistence modules.
See \cref{rem:distrelation}.	

\begin{remark}
	Let $F, G \in \Db_{\{\tau \ge 0 \}}(M \times \bR)$ and $a,b \in \bR_{\ge 0}$.
	\begin{enumerate}
		\item The pair $(F,G)$ is $(a,b)$-interleaved if and only if 
		$(G,F)$ is $(b,a)$-interleaved. 
		\item If $(F,G)$ is $(a,b)$-interleaved, 
		then $(F,G)$ is $(a',b')$-interleaved for any $a' \ge a, b' \ge b$.
		\item $(F,0)$ is $(a,b)$-interleaved if and only if $F$ is $(a+b)$-torsion.
	\end{enumerate}
\end{remark}

\begin{lemma}\label{lem:abisomsum}
	If $(F_0,F_1)$ is $(a_0,b_0)$-interleaved
	and $(F_1,F_2)$ is $(a_1,b_1)$-interleaved,
	then $(F_0,F_2)$ is $(a_0+a_1,b_0+b_1)$-interleaved.
\end{lemma}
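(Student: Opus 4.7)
The plan is to build the interleaving data for $(F_0,F_2)$ by straightforward composition of the given interleavings and then verify conditions (1) and (2) by a diagram chase based on the naturality of the transformation $\tau$.

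Concretely, let $\alpha_i,\delta_i \colon F_i \to {T_{a_i}}_*F_{i+1}$ and $\beta_i,\gamma_i \colon F_{i+1} \to {T_{b_i}}_*F_i$ ($i=0,1$) be morphisms witnessing the $(a_i,b_i)$-interleaving of $(F_i,F_{i+1})$. I would define
\begin{equation}
\alpha := {T_{a_0}}_*\alpha_1 \circ \alpha_0, \quad
\delta := {T_{a_0}}_*\delta_1 \circ \delta_0, \quad
\beta := {T_{b_1}}_*\beta_0 \circ \beta_1, \quad
\gamma := {T_{b_1}}_*\gamma_0 \circ \gamma_1,
\end{equation}
which are morphisms $F_0 \to {T_{a_0+a_1}}_* F_2$ and $F_2 \to {T_{b_0+b_1}}_* F_0$ of the required shape.

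To verify condition (1), I would compute ${T_{a_0+a_1}}_*\beta \circ \alpha$ by expanding it as the four-step composition $F_0 \to {T_{a_0}}_*F_1 \to {T_{a_0+a_1}}_*F_2 \to {T_{a_0+a_1+b_1}}_*F_1 \to {T_{a_0+a_1+b_0+b_1}}_*F_0$. The middle two arrows are ${T_{a_0}}_*$ applied to ${T_{a_1}}_*\beta_1 \circ \alpha_1$, which by condition (1) for $(F_1,F_2)$ equals ${T_{a_0}}_*\tau_{0,a_1+b_1}(F_1) = \tau_{a_0,a_0+a_1+b_1}(F_1)$. Then I would invoke the naturality square of $\tau_{a_0,a_0+a_1+b_1}$ on the morphism $\beta_0$ to rewrite ${T_{a_0+a_1+b_1}}_*\beta_0 \circ \tau_{a_0,a_0+a_1+b_1}(F_1)$ as $\tau_{a_0+b_0,a_0+a_1+b_0+b_1}(F_0) \circ {T_{a_0}}_*\beta_0$. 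Applying condition (1) for $(F_0,F_1)$ collapses ${T_{a_0}}_*\beta_0 \circ \alpha_0$ to $\tau_{0,a_0+b_0}(F_0)$, and the composition rule $\tau_{d,e} \circ \tau_{c,d}=\tau_{c,e}$ finally yields $\tau_{0,a_0+a_1+b_0+b_1}(F_0)$, proving (1).

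Condition (2) follows by the exact same argument with the roles of $F_0$ and $F_2$ swapped, using $\gamma_i$ and $\delta_i$ in place of $\alpha_i$ and $\beta_i$. There is no real obstacle here; the statement is a formal consequence of the naturality of the family $\{\tau_{c,d}\}$ and the identity ${T_c}_*\tau_{d,e} = \tau_{c+d,c+e}$. The only care required is bookkeeping of the indices, so the ``hard part'' reduces to drawing the correct commutative diagram and tracking which $T_*$-shifts each natural transformation acquires after pushforward.
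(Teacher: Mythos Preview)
Your proof is correct and essentially identical to the paper's: you define the same composite morphisms $\alpha,\beta,\gamma,\delta$ and verify condition~(1) via the same two-triangle-plus-naturality-square argument, just written out as a chain of equalities rather than a diagram. The paper likewise omits the verification of condition~(2) with a ``similarly''.
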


\begin{proof}
	By assumption, for $i=0,1$, there exist morphisms 
	\begin{equation}
	\alpha_i, \delta_i \colon F_i \to {T_{a_i}}_*F_{i+1}, 
	\quad 
	\beta_i, \gamma_i \colon F_{i+1} \to{T_{b_i}}_*F_i
	\end{equation}
	satisfying 
	\begin{equation}\label{eq:absumcond}
	{T_{a_i}}_* \beta_i \circ \alpha_i  = \tau_{0,a_i+b_i}(F_i), \quad 
	 {T_{b_i}}_* \delta_i \circ \gamma_i  = \tau_{0,a_i+b_i}(F_{i+1}).
	\end{equation}
	We set 
	\begin{equation}
	\begin{aligned}
	\alpha &:= {T_{a_0}}_* \alpha_1 \circ \alpha_0 \colon F_0 \to {T_{a_0+a_1}}_*F_2, \\
	 \beta &:= {T_{b_1}}_* \beta_0 \circ \beta_1 \colon F_2 \to {T_{b_0+b_1}}_*F_1, \\
	\gamma &:= {T_{b_1}}_* \gamma_0 \circ \gamma_1 \colon F_2 \to {T_{b_0+b_1}}_*F_1, \\
 \delta &:= {T_{a_0}}_* \delta_1 \circ \delta_0 \colon F_0 \to{T_{a_0+a_1}}_*F_2.
	\end{aligned}
	\end{equation}
	Let us consider the following commutative diagram:
	\[
	\xymatrix@C=50pt@R=18pt{
		&  & F_0 \ar[ld]_-{\alpha_0} \ar[dd]^-{\tau_{0,a_0+b_0}(F_0)}\\
		& {T_{a_0}}_*F_1 \ar[rd]^-{{T_{a_0}}_*\beta_0} \ar[dd]^-{\tau_{a_0,a_0+a_1+b_1}(F_1)} \ar[ld]_-{{T_{a_0}}_*\alpha_1}& \\
		{T_{a_0+a_1}}_*F_2\ar[rd]_-{{T_{a_0+a_1}}_*\beta_1 \quad }&&{T_{a_0+b_0}}_*F_0\ar[dd]^-{\tau_{a_0+b_0,a_0+a_1+b_0+b_1}(F_0)}\\
		& {T_{a_0+a_1+b_1}}_*F_1 \ar[rd]_-{{T_{a_0+a_1+b_1}}_*\beta_0 \qquad } & \\
		& & {T_{a_0+a_1+b_1+b_2}}_*F_0.
	}
	\]
	The two triangles in the diagram commute by \eqref{eq:absumcond}.
	Since we obtain the square by applying $\tau_{a_0, a_0+a_1+b_1}$ to $\beta_0$, it also commutes.
	Hence we have ${T_{a_0+a_1}}_* \beta \circ \alpha =\tau_{0,a_0+a_1+b_0+b_1}(F_0)$.
	Similarly, we get 
	\begin{equation}
	    {T_{b_0+b_1}}_* \delta \circ \gamma =\tau_{0,a_0+a_1+b_0+b_1}(F_2).
	\end{equation}
\end{proof}

A similar argument to the proof of \cref{lem:abisomsum} shows the following lemma.

\begin{lemma}\label{lem:abisomhom}
	Let $F_0,F_1,G_0,G_1 \in \Db_{\{\tau \ge 0 \}}(M \times \bR)$ 
	and assume that $(F_0,F_1)$ is $(a_F,b_F)$-interleaved 
	and $(G_0,G_1)$ is $(a_G,b_G)$-interleaved.
	Then the pair $(\cHom^\star(F_0,G_0),\cHom^\star(F_1,G_1))$ is $(b_F+a_G,a_F+b_G)$-interleaved.
\end{lemma}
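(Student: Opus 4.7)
The plan is to mimic the proof of \pref{lem:abisomsum}, exploiting the bifunctoriality of $\cHom^\star$ together with its interaction with the shift functors ${T_c}_*$. From the two interleavings I have morphisms $\alpha_F,\delta_F \colon F_0 \to {T_{a_F}}_* F_1$, $\beta_F,\gamma_F \colon F_1 \to {T_{b_F}}_* F_0$, and analogous morphisms $\alpha_G,\delta_G,\beta_G,\gamma_G$ for the $G$'s, satisfying the composition conditions of \pref{def:defab}.

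First, I would record the formal identifications $\cHom^\star(F, {T_c}_* G) \simeq {T_c}_* \cHom^\star(F, G)$ and $\cHom^\star({T_c}_* F, G) \simeq {T_{-c}}_* \cHom^\star(F, G)$, which follow from Yoneda applied to the adjunction $(-)\star F \dashv \cHom^\star(F,-)$ together with the fact that convolution with $\bfk_{M\times\{c\}}$ implements ${T_c}_*$. Under these identifications, a morphism $\varphi \colon F \to {T_c}_* F'$ induces $\cHom^\star(\varphi, G) \colon \cHom^\star(F', G) \to {T_c}_* \cHom^\star(F, G)$, while a morphism $\psi \colon G \to {T_c}_* G'$ induces $\cHom^\star(F, \psi) \colon \cHom^\star(F, G) \to {T_c}_* \cHom^\star(F, G')$. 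Both constructions are natural with respect to the structural maps $\tau_{c',d'}$, so applying either to $\tau_{0,c}$ in the relevant slot yields exactly $\tau_{0,c}(\cHom^\star(F,G))$.

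Next I would set $\tilde{\alpha} := {T_{a_G}}_* \cHom^\star(\beta_F, G_1) \circ \cHom^\star(F_0, \alpha_G)$, $\tilde{\delta} := {T_{a_G}}_* \cHom^\star(\gamma_F, G_1) \circ \cHom^\star(F_0, \delta_G)$, $\tilde{\beta} := {T_{b_G}}_* \cHom^\star(\alpha_F, G_0) \circ \cHom^\star(F_1, \beta_G)$, and $\tilde{\gamma} := {T_{b_G}}_* \cHom^\star(\delta_F, G_0) \circ \cHom^\star(F_1, \gamma_G)$. By the translation identities above these produce morphisms $\tilde{\alpha},\tilde{\delta} \colon \cHom^\star(F_0, G_0) \to {T_{b_F+a_G}}_*\cHom^\star(F_1, G_1)$ and $\tilde{\beta},\tilde{\gamma} \colon \cHom^\star(F_1, G_1) \to {T_{a_F+b_G}}_*\cHom^\star(F_0, G_0)$.

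Finally I would verify the interleaving conditions. Consider ${T_{b_F+a_G}}_* \tilde{\beta} \circ \tilde{\alpha}$. Since the $F$-slot morphisms $\cHom^\star(\beta_F, -),\cHom^\star(\alpha_F, -)$ and the $G$-slot morphisms $\cHom^\star(-, \alpha_G), \cHom^\star(-, \beta_G)$ commute with one another by bifunctoriality of $\cHom^\star$, this composition rearranges (up to the translation identifications from paragraph one) into $\cHom^\star({T_{a_F}}_* \beta_F \circ \alpha_F,\ {T_{a_G}}_* \beta_G \circ \alpha_G)$. The $(a_F, b_F)$- and $(a_G, b_G)$-interleaving hypotheses then rewrite this as $\cHom^\star(\tau_{0,a_F+b_F}(F_0), \tau_{0,a_G+b_G}(G_0))$, and the naturality established in paragraph one identifies this with $\tau_{0,\, a_F+b_F+a_G+b_G}(\cHom^\star(F_0, G_0))$, which is precisely the desired identity. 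The condition for $\tilde{\gamma}, \tilde{\delta}$ follows by an entirely symmetric calculation. The only real obstacle is the bookkeeping of how the shifts ${T_c}_*$ propagate through each intermediate term; structurally the diagram chase is identical to, albeit slightly more elaborate than, the one in \pref{lem:abisomsum}.
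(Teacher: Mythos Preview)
Your proposal is correct and is precisely the argument the paper has in mind: the paper does not spell out a proof but only states that a similar argument to that of \pref{lem:abisomsum} works, and your construction of $\tilde\alpha,\tilde\beta,\tilde\gamma,\tilde\delta$ via the bifunctoriality of $\cHom^\star$ together with the translation identities $\cHom^\star(F,{T_c}_*G)\simeq{T_c}_*\cHom^\star(F,G)\simeq\cHom^\star({T_{-c}}_*F,G)$ is exactly that. The diagram chase reducing ${T_{b_F+a_G}}_*\tilde\beta\circ\tilde\alpha$ to $\cHom^\star(\tau_{0,a_F+b_F}(F_0),\tau_{0,a_G+b_G}(G_0))=\tau_{0,a_F+b_F+a_G+b_G}(\cHom^\star(F_0,G_0))$ (and symmetrically for $\tilde\gamma,\tilde\delta$) is correct.
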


Now we define a pseudo-distance on Tamarkin's category $\cD(M)$.

\begin{definition}\label{def:distance}
	For object $F,G \in \cD(M)$, one defines 
	\begin{equation}
	d_{\cD(M)}(F,G)
	:=
	\inf 
		\left\{	a+b \in \bR_{\ge 0} 
	\; \middle| \;
	\begin{aligned}
	    & a,b \in \bR_{\ge 0}, \\
    	& \text{$(F,G)$ is $(a,b)$-interleaved}
	\end{aligned}
	\right\},
	\end{equation}
	and calls $d_{\cD(M)}$ the \emph{translation distance}.
\end{definition}

\begin{remark}\label{rem:distrelation}
	\begin{enumerate}
		\item \cref{def:defab} and \cref{def:distance} are inspired by the notion of ``$a$-isomorphic" and the convolution distance on the derived categories of sheaves on vector spaces recently introduced by Kashiwara--Schapira~\cite{KS18persistent}.
		In fact, if $M=\pt$ and $F$ and $G$ are $a$-isomorphic, 
		then $(F,G)$ is $(a,a)$-interleaved.
		Moreover, if $F$ is $(a,b)$-isomorphic to $G$, then $F$ and $G$ are $2\max\{ a,b \}$-isomorphic in the sense of Kashiwara--Schapira~\cite{KS18persistent}.
		\item The translation distance $d_{\cD(M)}$ is similar to the interleaving distance for persistence modules introduced by \cite{CCSGGO} (see also \cite{CdSGO16}).
		Their definition of ``$a$-interleaved" corresponds to \cref{def:defab} 
		with $a=b$ and $\alpha=\delta, \beta=\gamma$.
		However, as remarked by Usher--Zhang~\cite[Remark~8.5]{UZ16}, removing the restriction $a=b$ gives a better estimate of the displacement energy.
		In fact, if we restrict ourselves to $a=b$ and use the associated pseudo-distance, then we can only prove $d(G_0,G_1) \le 2 \int_{0}^{1} \|H_s \|_{\infty}\, ds$ in \cref{thm:GKShtpy} below.
	\end{enumerate}
\end{remark}

We summarize some properties of $d_{\cD(M)}$.

\begin{proposition}\label{prp:propertydD}
	Let $F,G,H, F_0,F_1,G_0,G_1 \in \cD(M)$.
	\begin{enumerate}
		\item $d_{\cD(M)}(F,G)=d_{\cD(M)}(G,F)$,
		\item $d_{\cD(M)}(F,G) \le d_{\cD(M)}(F,H) + d_{\cD(M)}(H,G)$,
		\item $d_{\cD(M)}(\cHom^\star(F_0,G_0),\cHom^\star(F_1,G_1)) \!\le\! d_{\cD(M)}(F_0,F_1) \!+\! d_{\cD(M)}(G_0,G_1)$.
	\end{enumerate}
	Moreover, let $f \colon M \to N$ be a morphism of manifolds and set $\tl{f}:=f \times \id_\bR \colon M \times \bR \to N \times \bR$.
	Regarding $F$ and $G$ as objects in the right orthogonal $\Db_{\{\tau \le 0 \}}(M \times \bR)^\perp$, one has 
	\begin{itemize}
		\item[$\mathrm{(iv)}$]  $d_{\cD(N)}(R{\tl{f}}_*F, R{\tl{f}}_*G) \le d_{\cD(M)}(F, G)$ \quad (see also \cref{rem:pushD}).
	\end{itemize}	
\end{proposition}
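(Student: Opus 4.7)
The four statements come essentially for free from the interleaving lemmas proved earlier in the subsection; the only real content is in (iv).

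For (i), the definition of $(a,b)$-interleaved is already symmetric in the sense recorded in the remark right after \pref{def:defab}: $(F,G)$ is $(a,b)$-interleaved if and only if $(G,F)$ is $(b,a)$-interleaved. Thus the set over which the infimum is taken, and the value $a+b$ assigned to each pair, is invariant under swapping $F$ and $G$, and (i) follows. For (ii), suppose $(F,H)$ is $(a_0,b_0)$-interleaved and $(H,G)$ is $(a_1,b_1)$-interleaved. By \pref{lem:abisomsum}, $(F,G)$ is $(a_0+a_1,b_0+b_1)$-interleaved, so $d_{\cD(M)}(F,G) \le (a_0+b_0)+(a_1+b_1)$. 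Taking the infimum first over interleavings realizing $d_{\cD(M)}(F,H)$ and then over those realizing $d_{\cD(M)}(H,G)$ yields the triangle inequality. For (iii), apply \pref{lem:abisomhom}: an $(a_F,b_F)$-interleaving of $(F_0,F_1)$ and an $(a_G,b_G)$-interleaving of $(G_0,G_1)$ produce a $(b_F+a_G,\,a_F+b_G)$-interleaving of the pair $(\cHom^\star(F_0,G_0),\cHom^\star(F_1,G_1))$, whose total is $(a_F+b_F)+(a_G+b_G)$. Passing to infima gives the claim.

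Part (iv) is the substantive step. Because $\tl{f} \circ T_c^M = T_c^N \circ \tl{f}$ as maps of manifolds, pushforward by $\tl{f}$ intertwines the translation functors, giving a canonical natural isomorphism $\eta_c \colon R\tl{f}_* \circ T_{c*}^M \simto T_{c*}^N \circ R\tl{f}_*$. The plan is: given an $(a,b)$-interleaving of $(F,G)$ witnessed by $\alpha,\delta \colon F \to T_{a*}^M G$ and $\beta,\gamma \colon G \to T_{b*}^M F$, form the four morphisms $\eta_a \circ R\tl{f}_*\alpha$, $\eta_a \circ R\tl{f}_*\delta$, $\eta_b \circ R\tl{f}_*\beta$, $\eta_b \circ R\tl{f}_*\gamma$ on the $N$-side, and verify conditions (1) and (2) of \pref{def:defab} for $(R\tl{f}_*F,R\tl{f}_*G)$. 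Composition and the isomorphism $\eta_a\eta_b \simeq \eta_{a+b}$ take care of the two-step compositions formally; what must actually be checked is the compatibility
\begin{equation}
\eta_{c}(F) \circ R\tl{f}_*\bigl(\tau_{0,c}^M(F)\bigr) \;=\; \tau_{0,c}^N(R\tl{f}_*F)
\end{equation}
for $F \in \Db_{\{\tau \ge 0\}}(M \times \bR)$, and this is the only non-formal ingredient. Once this is verified, $(R\tl{f}_*F, R\tl{f}_*G)$ is $(a,b)$-interleaved, and taking the infimum over $(a,b)$ gives $d_{\cD(N)}(R\tl{f}_*F,R\tl{f}_*G) \le d_{\cD(M)}(F,G)$.

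The main obstacle, therefore, is this compatibility. I would handle it by unwinding the definition of $\tau_{0,c}$ through the microlocal cut-off isomorphism and tracking it under $R\tl{f}_*$. Concretely, $\tau_{0,c}^M(F)$ is defined by applying $Rs_{M*}(\tilde q_1^{-1}(-) \otimes \tilde q_2^{-1}F)$ to the canonical morphism $\bfk_{M\times[0,+\infty)} \to \bfk_{M\times[c,+\infty)}$. Using the cartesian square comparing the sum maps $s_M$ and $s_N$ with $\tl{f}$ and its lift $f \times \id_\bR \times \id_\bR$, base change (for the proper map in the product along $\bR$) and the projection formula give a natural isomorphism identifying $R\tl{f}_*$ of the $M$-side cut-off construction with the corresponding $N$-side construction applied to $R\tl{f}_*F$, and this identification sends $R\tl{f}_*\tau_{0,c}^M(F)$ to $\tau_{0,c}^N(R\tl{f}_*F)$ by functoriality of the pushforward on the morphism $\bfk_{M\times[0,+\infty)} \to \bfk_{M\times[c,+\infty)}$, whose $\tl{f}$-pullback agrees with the $N$-side morphism after using $\bfk_{M\times[c,+\infty)} = \tl{f}^{-1}\bfk_{N\times[c,+\infty)}$. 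With this identification in hand, the plan outlined in the previous paragraph goes through.
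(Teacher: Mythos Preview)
Your argument is correct and, for (i)--(iii), matches the paper exactly: the paper's proof simply cites the symmetry remark after \pref{def:defab}, \pref{lem:abisomsum}, and \pref{lem:abisomhom} respectively.

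The one point of divergence is your assessment of (iv). You call it ``the substantive step'' and devote most of your write-up to verifying that $R\tl{f}_*$ intertwines the natural transformations $\tau_{0,c}$; the paper, by contrast, dispatches (iv) in the same breath as (i), saying only that both ``follow from the definition of $d_{\cD(M)}$.'' Your compatibility check is correct, but it is more elaborate than necessary: since $T_c$ on $N\times\bR$ pulls back to $T_c$ on $M\times\bR$ under $\tl f$, the functor $R\tl f_*$ strictly commutes with ${T_c}_*$, and the morphism $\tau_{c,d}$ is by construction a natural transformation of endofunctors ${T_c}_* \to {T_d}_*$ induced by a single morphism of constant sheaves on the $\bR$-factor alone. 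Hence $R\tl f_*(\tau_{0,c}(F)) = \tau_{0,c}(R\tl f_*F)$ is immediate naturality, without needing to invoke base change or the projection formula explicitly. In short, your proof of (iv) is right but can be compressed to one sentence; the paper treats it as routine rather than as the main content.
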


\begin{proof}
	(i) and (iv) follow from the definition of $d_{\cD(M)}$.
	(ii) follows from \cref{lem:abisomsum} and (iii) follows from \cref{lem:abisomhom}.
\end{proof}

\begin{example}
	Assume that $M$ is compact and 
	let $\varphi \colon M \to \bR$ be a $C^\infty$-function.
	Recall also that we assume $M$ is connected. 
	Define 
	\begin{equation}
	\begin{split}
	& Z:=\{ (x,t) \in M \times \bR \mid \varphi(x)+t \ge 0 \}, \\
	& F:=\bfk_{M \times [0,+\infty)}, \ G:=\bfk_Z \in {}^\perp \Db_{\{\tau \le 0 \}}(M \times \bR) \simeq \cD(M).
	\end{split}
	\end{equation}
	Set $a:= \max\{ \max \varphi, 0 \}, b:= -\min\{ \min \varphi, 0 \}$.
	Then there exist morphisms $\alpha \colon F \to {T_a}_*G$ and $\beta \colon G \to {T_b}_*F$ such that ${T_a}_*\beta \circ \alpha=\tau_{0,a+b}(F)$ and ${T_b}_*\alpha \circ \beta=\tau_{0,a+b}(G)$.
	This implies that $(F,G)$ is $(a,b)$-interleaved and 
	\begin{equation}
	d_{\cD(M)}(F,G) \le a+b=\max\{ \max \varphi, 0 \}-\min\{ \min \varphi, 0 \}.
	\end{equation}
	Since $\Hom_{\cD(M)}(F,{T_c}_*G) \simeq H^0 \RG_{M \times [-c,+\infty)}(M \times \bR;\cHom^\star(F,G)) \simeq 0$ for any $c<\max \varphi$ and $\Hom_{\cD(M)}(G,{T_c}_*F) \simeq 0$ for any $c< -\min \varphi$,
	the equation $d_{\cD(M)}(F,G) = a+b$ holds. 
\end{example}

\begin{example}
	Assume that $M$ is compact.
	Let $L$ be a compact connected exact Lagrangian submanifold of $T^*M$ and $f \colon L \to \bR$ be a primitive of the Liouville 1-form $\alpha_{T^*M}$, that is, a $C^\infty$-function satisfying $\alpha_{T^*M}|_L=df$.
	Define a locally closed conic Lagrangian submanifold $\wh{L}_f$ of $T^*(M \times \bR)$ by
	\begin{equation}
	\wh{L}_f
	:=
	\{
	(x,t;\tau \xi,\tau) \mid
	\tau >0, (x;\xi) \in L, t=-f(x;\xi)
	\}.
	\end{equation}
	Then by a result of Guillermou~\cite{Gu12, Gulec}, there exists an object $F_L \in \Db(M \times \bR)$ called the canonical sheaf quantization such that $\mathring{\MS}(F_L)=\wh{L}_f$ and $F_L|_{M \times \{t \}} \simeq \bfk_M$ for $t> -\min f$.
	Moreover $F_L$ can be regarded as an object in $\cD_{L}(M)$. 
	
	Now, for $i=1,2$, let $L_i$ be a compact connected exact Lagrangian submanifold of $T^*M$ and $f _i \colon L_i \to \bR$ be a primitive of the Liouville 1-form $\alpha_{T^*M}$.
	Then it is known that $L_1 \cap L_2 \neq \emptyset$ (see \cite{Ike19} for a sheaf-theoretic proof).
	For simplicity, we assume that 
	\begin{equation}
	\min_{p \in L_1 \cap L_2}(f_2-f_1) \le 0 \le \max_{p \in L_1 \cap L_2}(f_2-f_1).
	\end{equation}
	Moreover, let $F_i \in \Db(M \times \bR)$ be the canonical sheaf quantization associated with $L_i$ and $f_i$ for $i=1,2$.
	Set $a:=\max_{p \in L_1 \cap L_2}(f_2-f_1)$.
	Then, using an estimate of $\MS(\cHom^\star(F_1,F_2))$ and the microlocal Morse lemma (\cref{prp:microlocalmorse}), one can show that 
	\begin{equation}
	\Hom_{\cD(M)}(F_1,{T_a}_*F_2[k])
	\simeq
	H^k(M;\bfk_M)
	\end{equation}
	for any $k \in \bZ$.
	Thus there exists a morphism $\alpha \colon F_1 \to {T_a}_*F_2$ corresponding to $1 \in \bfk \simeq H^0(M;\bfk)$.
	Set $b:=\max_{p \in L_1 \cap L_2}(f_1-f_2)$.
	Then, similarly to the above, we obtain $\Hom_{\cD(M)}(F_2,{T_b}_*F_1) \simeq H^0(M;\bfk)$ and get a morphism $\beta \colon F_2 \to {T_b}_*F_1$ corresponding to $1 \in \bfk$.
	By construction, we find that ${T_b}_*\beta \circ \alpha=\tau_{0,a+b}(F_1)$ and ${T_a}_*\alpha \circ \beta=\tau_{0,a+b}(F_2)$.
	Thus $(F_1,F_2)$ is $(a,b)$-interleaved and 
	\begin{align}
	d_{\cD(M)}(F_1,F_2) 
	& \le 
	\max_{p \in L_1 \cap L_2}(f_2-f_1)+\max_{p \in L_1 \cap L_2}(f_1-f_2) \\
	\notag & =
	\max_{p \in L_1 \cap L_2}(f_2-f_1)-\min_{p \in L_1 \cap L_2}(f_2-f_1).
	\end{align}
\end{example}

Next, we prove that a ``homotopy sheaf" gives an $(a,b)$-interleaved pair.

\begin{lemma}\label{lem:abisomtor}
	Let $F \stackrel{u}{\lto} G \stackrel{v}{\lto} H \stackrel{w}{\lto} F[1]$ be a distinguished triangle in $\Db_{\{\tau \ge 0 \}}(M \times \bR)$ and assume that $F$ is $c$-torsion.
	Then $(G,H)$ is $(0,c)$-interleaved.
\end{lemma}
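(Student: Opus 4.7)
The plan is to produce the four morphisms required by \pref{def:defab} for $a=0, b=c$. Setting $\alpha = \delta := v \colon G \to H = {T_0}_*H$ already makes the first two morphisms trivial to name, so the substantive task is to build $\beta, \gamma \colon H \to {T_c}_*G$ satisfying
\begin{equation}
\beta \circ v = \tau_{0,c}(G), \qquad {T_c}_* v \circ \gamma = \tau_{0,c}(H).
\end{equation}
Both will be obtained by a factorization argument that exploits the hypothesis $\tau_{0,c}(F)=0$, that is, the $c$-torsion of $F$.

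For $\beta$, I would apply ${T_c}_*$ to the given triangle and use the naturality of the translation morphism $\tau_{0,c}$, yielding a commutative square with vertical arrows $\tau_{0,c}(F), \tau_{0,c}(G)$ and horizontal arrow $u$. Since $\tau_{0,c}(F) = 0$ by assumption, the composition $\tau_{0,c}(G) \circ u$ vanishes. Applying $\Hom(-, {T_c}_*G)$ to the distinguished triangle $F \xrightarrow{u} G \xrightarrow{v} H \xrightarrow{w} F[1]$ then gives a long exact sequence in which $\tau_{0,c}(G) \in \Hom(G, {T_c}_*G)$ lies in the kernel of $u^*$; by exactness it lifts along $v^*$, producing the desired $\beta$.

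For $\gamma$, I would argue dually. Naturality of $\tau_{0,c}$ applied to the connecting morphism $w \colon H \to F[1]$ gives the equality ${T_c}_* w \circ \tau_{0,c}(H) = \tau_{0,c}(F[1]) \circ w$. Again the $c$-torsion hypothesis on $F$ (hence on $F[1]$) kills the right-hand side, so ${T_c}_* w \circ \tau_{0,c}(H) = 0$. Applying $\Hom(H, -)$ to the triangle ${T_c}_*F \to {T_c}_*G \to {T_c}_*H \xrightarrow{{T_c}_*w} {T_c}_*F[1]$ and using exactness, the element $\tau_{0,c}(H) \in \Hom(H, {T_c}_*H)$ lifts through $({T_c}_*v)_*$ to a morphism $\gamma \colon H \to {T_c}_*G$ with the required property.

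With these choices the defining conditions (1) and (2) of $(0,c)$-interleaving for the pair $(G,H)$ become exactly $\beta \circ v = \tau_{0,c}(G)$ and ${T_c}_*v \circ \gamma = \tau_{0,c}(H)$, which hold by construction. There is no real obstacle here: the entire content is the observation that $c$-torsion of $F$ forces the two obstructions to factoring $\tau_{0,c}(G)$ through $v$ and lifting $\tau_{0,c}(H)$ through ${T_c}_*v$ to vanish, after which the triangulated category axioms supply $\beta$ and $\gamma$ automatically.
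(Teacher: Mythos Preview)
Your proof is correct and essentially identical to the paper's own argument: the paper also sets the two morphisms $G \to H$ equal to $v$, then uses $\tau_{0,c}(G) \circ u = 0$ to factor $\tau_{0,c}(G)$ through $v$, and ${T_c}_*w \circ \tau_{0,c}(H) = 0$ to lift $\tau_{0,c}(H)$ through ${T_c}_*v$. The only cosmetic difference is that the paper names the two constructed morphisms $H \to {T_c}_*G$ as $\alpha$ and $\delta$ (in mild conflict with the labeling of \pref{def:defab}), whereas you call them $\gamma$ and $\beta$; the content is the same.
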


\begin{proof}
	By assumption, we have ${T_c}_*w \circ \tau_{0,c}(H)=\tau_{0,c}(F[1]) \circ w=0$.
	Hence, we get a morphism $\gamma \colon H \to {T_c}_*G$ satisfying $\tau_{0,c}(H)={T_c}_*v \circ \gamma$.
	\begin{equation}
	\begin{split}
	\xymatrix{
		F \ar[r]^-u \ar[d] & G \ar[r]^-v \ar[d] \ar@{}[rd]|(.7){\circlearrowright} & H \ar[r]^-w \ar[d] \ar[d] \ar@{-->}[ld]_-{\gamma} & F[1] \ar[d]^-0 \\
		{T_c}_* F \ar[r]_-{{T_c}_*u} & {T_c}_*G \ar[r]_-{{T_c}_*v} & {T_c}_*H \ar[r]_-{{T_c}_*w} & {T_c}_*F[1]
	}
	\end{split}
	\end{equation}
	On the other hand, since $\tau_{0,c}(G) \circ u={T_c}_*u \circ \tau_{0,c}(F)=0$, there exists a morphism $\beta \colon H \to {T_c}_*G$ satisfying $\tau_{0,c}(G)=\beta \circ v$.
	\begin{equation}
	\begin{split}
	\xymatrix{
		F \ar[r]^-u \ar[d]^-0 & G \ar[r]^-v \ar[d] \ar@{}[rd]|(.3){\circlearrowright} & H \ar[r]^-w \ar[d] \ar[d] \ar@{-->}[ld]^-{\beta} & F[1] \ar[d] \\
		{T_c}_* F \ar[r]_-{{T_c}_*u} & {T_c}_*G \ar[r]_-{{T_c}_*v} & {T_c}_*H \ar[r]_-{{T_c}_*w} & {T_c}_*F[1]
	}
	\end{split}
	\end{equation}
	This proves the result.
\end{proof}

\begin{proposition}\label{prp:abisomhtpy}
	Let $I$ be an open interval containing the closed interval $[0,1]$ and
	 $\cH \in \Db_{\{\tau \ge 0\}}(M \times \bR \times I)$.
	Assume that there exist continuous functions $f, g \colon I \to \bR_{\ge 0}$ satisfying
	\begin{equation}
	\MS(\cH) \subset T^*M \times \{(t,s;\tau,\sigma) \mid -f(s) \cdot \tau \le \sigma \le g(s) \cdot \tau \}.
	\end{equation}
	Then $\left(\cH|_{M \times \bR \times \{0\}},\cH|_{M \times \bR \times \{1\}} \right)$ is $\left( \int_{0}^{1} g(s) ds+\varepsilon, \int_{0}^{1} f(s) ds +\varepsilon \right)$-interleaved for any $\varepsilon \in \bR_{>0}$.
\end{proposition}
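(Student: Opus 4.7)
The plan is to reduce the proposition to a telescoping chain of local interleavings, each obtained by combining \pref{prp:torhtpy} with \pref{lem:abisomtor}, and then pass to a Riemann-sum limit via \pref{lem:abisomsum}.

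Fix $\varepsilon>0$. By uniform continuity of $f$ and $g$ on $[0,1]$, one can choose a partition $0=s_0<s_1<\cdots<s_N=1$ together with $r>0$ smaller than the mesh so that each enlarged interval $(s_i-r,s_{i+1}+r)$ lies in $I$, and the upper bounds
\[
f_i := \max_{[s_i-r,\,s_{i+1}+r]} f, \qquad g_i := \max_{[s_i-r,\,s_{i+1}+r]} g
\]
satisfy $\sum_{i=0}^{N-1} f_i(s_{i+1}-s_i) \le \int_0^1 f\,ds + \varepsilon/2$ and the analogous estimate for $g$. Writing $\Delta s_i:=s_{i+1}-s_i$, the microsupport hypothesis gives on the enlarged slab an inclusion $\MS(\cH)\subset T^*M\times(\bR\times I)\times\gamma_{f_i,g_i}$, so \pref{prp:torhtpy} applied with error $\varepsilon':=\varepsilon/(2N)$ shows that $Rq_*\cH_{M\times\bR\times[s_i,s_{i+1})}$ is $(f_i\Delta s_i+\varepsilon')$-torsion and $Rq_*\cH_{M\times\bR\times(s_i,s_{i+1}]}$ is $(g_i\Delta s_i+\varepsilon')$-torsion.

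The two closed/open decompositions of $[s_i,s_{i+1}]$ produce distinguished triangles in $\Db_{\{\tau\ge 0\}}(M\times\bR)$:
\[
Rq_*\cH_{(s_i,s_{i+1}]}\lto Rq_*\cH_{[s_i,s_{i+1}]}\lto\cH|_{s_i}\toone,
\]
\[
Rq_*\cH_{[s_i,s_{i+1})}\lto Rq_*\cH_{[s_i,s_{i+1}]}\lto\cH|_{s_{i+1}}\toone.
\]
Applying \pref{lem:abisomtor} to each, the pair $(Rq_*\cH_{[s_i,s_{i+1}]},\cH|_{s_i})$ is $(0,g_i\Delta s_i+\varepsilon')$-interleaved and $(Rq_*\cH_{[s_i,s_{i+1}]},\cH|_{s_{i+1}})$ is $(0,f_i\Delta s_i+\varepsilon')$-interleaved. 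Reversing the first using the symmetry of the interleaving relation and concatenating with the second via \pref{lem:abisomsum} yields that $(\cH|_{s_i},\cH|_{s_{i+1}})$ is $\bigl(g_i\Delta s_i+\varepsilon',\,f_i\Delta s_i+\varepsilon'\bigr)$-interleaved.

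Finally, iterating \pref{lem:abisomsum} along the entire partition gives that $(\cH|_0,\cH|_1)$ is
\[
\Bigl(\textstyle\sum_i g_i\Delta s_i+N\varepsilon',\ \sum_i f_i\Delta s_i+N\varepsilon'\Bigr)\text{-interleaved},
\]
and our choice of partition bounds this pair by $\bigl(\int_0^1 g\,ds+\varepsilon,\,\int_0^1 f\,ds+\varepsilon\bigr)$, as required. The only subtlety is bookkeeping: the right-open restriction $[s_i,s_{i+1})$ contributes $f_i$-torsion (hence the $f$-integral in the second slot), while the left-open restriction $(s_i,s_{i+1}]$ contributes $g_i$-torsion (hence the $g$-integral in the first slot); one must keep these paired correctly when invoking \pref{lem:abisomtor} and then symmetrizing. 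Once this is done, the argument is driven entirely by uniform continuity of $f,g$ and the linearity of \pref{lem:abisomsum}.
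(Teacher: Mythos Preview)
Your proposal is correct and follows essentially the same route as the paper: partition $[0,1]$, apply \pref{prp:torhtpy} on each subinterval to get torsion bounds, feed these through \pref{lem:abisomtor} and the two open/closed decomposition triangles (via the intermediate object $Rq_*\cH_{[s_i,s_{i+1}]}$), and then telescope with \pref{lem:abisomsum} into an upper Riemann sum. The only point to watch is that \pref{prp:torhtpy} is stated for strictly positive cone parameters $a,b\in\bR_{>0}$, so when some $f_i$ or $g_i$ could vanish you should pad them by a small $\delta>0$ absorbed into the $\varepsilon$-budget, exactly as the paper does by passing to $\gamma_{a+\varepsilon'/2,\,b+\varepsilon'/2}$.
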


\begin{proof}
	Set $\Lambda':=\{(t,s;\tau,\sigma) \mid -f(s) \cdot \tau \le \sigma \le g(s) \cdot \tau \}$.
	Let $s_1<s_2$ be in $[0,1]$ and $\varepsilon' \in \bR_{>0}$ be an arbitrary positive number.
	Then there is $r \in \bR_{>0}$ such that
	\begin{equation}
	f(s) \le \max_{s \in [s_1,s_2]} f(s) + \frac{\varepsilon'}{2}
	\quad \text{and} \quad 
	g(s) \le \max_{s \in [s_1,s_2]} g(s)+ \frac{\varepsilon'}{2}
	\end{equation}
	for any $s \in (s_1-r,s_2+r)$, which implies
	\begin{equation}
	\Lambda' \cap \pi^{-1}(M \times \bR \times (s_1-r,s_2+r))
	\subset
	T^*M \times (\bR \times I) \times \gamma_{a + \frac{\varepsilon'}{2}, b + \frac{\varepsilon'}{2}}
	\end{equation}
	with $a=\max_{s \in [s_1,s_2]} f(s)$ and $b=\max_{s \in [s_1,s_2]} g(s)$.
	Let $q : M \!\times\! \bR \!\times\! I \!\to\! M \!\times\! \bR$ be the projection.
	By \cref{prp:torhtpy}, $Rq_*(\cH_{M \times \bR \times [s_1,s_2)})$ is $(a(s_2-s_1)+\varepsilon')$-torsion and $Rq_*(\cH_{M \times \bR \times (s_1,s_2]})$ is $(b(s_2-s_1)+\varepsilon')$-torsion.
	Hence, by Lemmas~\ref{lem:abisomsum} and \ref{lem:abisomtor}, and the distinguished triangles
	\begin{equation}
	\begin{split}
	& Rq_* (\cH_{M \times \bR \times (s_1,s_2]})
	\lto
	Rq_*(\cH_{M \times \bR \times [s_1,s_2]})
	\lto
	\cH|_{M \times \bR \times \{s_1\}}
	\toone, \\
	& Rq_*(\cH_{M \times \bR \times [s_1,s_2) })
	\lto
	Rq_*(\cH_{M \times \bR \times [s_1,s_2]})
	\lto
	\cH|_{M \times \bR \times \{s_2\}}
	\toone,
	\end{split}
	\end{equation}
	we find that $\left( \cH|_{M \times \bR \times \{s_1\}},\cH|_{M \times \bR \times \{s_2\}} \right)$ 
	is $(b(s_2-s_1)+\varepsilon',a(s_2-s_1)+\varepsilon')$-interleaved.
	Thus, by \cref{lem:abisomsum} again, 
	$\left( \cH|_{M \times \bR \times  \{0\}},\cH|_{M \times \bR \times \{1\}} \right)$ 
	is $(b_n+\varepsilon/2,a_n+\varepsilon/2)$-interleaved for any $n \in \bZ_{>0}$, where $a_n$ and $b_n$ are the Riemann sums 
	\begin{equation}
	a_n
	=
	\sum_{k=0}^{n-1} \frac{1}{n} \cdot \max_{s \in \left[ \frac{k}{n}, \frac{k+1}{n} \right]} f(s)
	\quad \text{and} \quad
	b_n
	=
	\sum_{k=0}^{n-1} \frac{1}{n} \cdot \max_{s \in \left[ \frac{k}{n}, \frac{k+1}{n} \right]} g(s).
	\end{equation}
	Since $f$ and $g$ are continuous on $I$, there is a sufficiently large $n \in \bZ_{>0}$ such that
	\begin{equation}
	a_n
	\le
	\int_{0}^{1} f(s) ds +\frac{\varepsilon}{2}
	\quad \text{and} \quad
	b_n
	\le
	\int_{0}^{1} g(s) ds +\frac{\varepsilon}{2},
	\end{equation}
	which completes the proof.
\end{proof}

Now, let us consider the distance between Hamiltonian isotopic objects in $\cD(M)$.
Using sheaf quantization of Hamiltonian isotopies (\cref{thm:GKS}), we can define Hamiltonian deformations in $\cD(M)$.
From now on, until the end of this section, 
we assume moreover that the dimension of $M$ is greater than $0$
and fix an open interval $I$ containing $[0,1]$.
For a compactly supported Hamiltonian function $H=(H_s)_s \colon T^*M \times I \to \bR$, following Hofer~\cite{Hofer90}, we define 
\begin{equation}
\begin{aligned}
E_+(H)
& :=
\int_0^1 \max_{p} H_s(p) ds, 
\qquad 
E_-(H)
:=
-\int_0^1 \min_{p} H_s(p) ds, \\
\| H \|
& :=
E_+(H)+E_-(H)
=
\int_0^1 \left(\max_p H_s(p) - \min_p H_s(p) \right) ds.
\end{aligned}
\end{equation}

\begin{theorem}\label{thm:GKShtpy}
	Let $H=(H_s)_s \colon T^*M \times I \to \bR$ be a compactly supported Hamiltonian function and denote by $\phi^H$ the Hamiltonian isotopy generated by $H$.	
	Let $K \in \Db(M \times \bR \times M \times \bR \times I)$ be the sheaf quantization associated with $\phi^H$.
	Moreover, let $G \in \cD(M)$, and set $G':=K \circ G \in \Db(M \times \bR \times I)$ and $G_s:=G'|_{M \times \bR \times \{s\}} \in \cD(M)$ for $s \in I$.
	Then $(G_0,G_1)$ is $( E_-(H)+\varepsilon, E_+(H)+\varepsilon )$-interleaved 
	for any $\varepsilon \in \bR_{>0}$.
	In particular, $d_{\cD(M)}(G_0,G_1) \le \| H \|$.
\end{theorem}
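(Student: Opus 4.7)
The plan is to use the object $G' = K \circ G$ on $M \times \bR \times I$ as a ``homotopy sheaf'' connecting $G_0$ and $G_1$, control its microsupport in the $T^*I$-direction in terms of the oscillation of $H_s$, and then invoke \pref{prp:abisomhtpy}. Indeed $G_s = G'|_{M \times \bR \times \{s\}}$ by construction, so the only nontrivial input is a microsupport estimate of the form
\begin{equation}
	\MS(G') \subset T^*M \times \{(t,s;\tau,\sigma) \mid -\max_p H_s(p) \cdot \tau \le \sigma \le -\min_p H_s(p) \cdot \tau\}.
\end{equation}

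\textbf{Microsupport computation.} I would compute $\MS(G')$ via \pref{prp:SScomp} applied to the composition of $K$ (regarded as a kernel from $M \times \bR$ to $M \times \bR \times I$) with $G$. Condition (1) follows from the properness of the projections of $\Supp(K)$ asserted in \pref{thm:GKS}; condition (2) holds because $\mathring{\MS}(K) \subset \Lambda_{\wh{\phi}}$, $\MS(G) \subset \{\tau \ge 0\}$ since we pick the representative in ${}^\perp \Db_{\{\tau \le 0\}}(M \times \bR)$, and $\wh{\phi}$ preserves $\Omega_+$ by diagram \eqref{diag:homog}. Unpacking the explicit description of $\Lambda_{\wh{\phi}}$ in \eqref{eq:deflambdahatphi}, a point $(x,t;\xi,\tau) \in \MS(G)$ contributes to $\MS(G')$ at $(\wh{\phi}_s(x,t;\xi,\tau), (s;-\wh{H}_s \circ \wh{\phi}_s(x,t;\xi,\tau)))$. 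Using $\wh{H}_s \circ \wh{\phi}_s(x,t;\xi,\tau) = \tau \cdot (H_s \circ \phi^H_s)(x;\xi/\tau)$ together with $\tau \ge 0$, the $T^*I$-coordinate $\sigma$ satisfies $\sigma = -\tau \cdot (H_s \circ \phi^H_s)(x;\xi/\tau)$, which is sandwiched between $-\max_p H_s(p)\cdot \tau$ and $-\min_p H_s(p) \cdot \tau$. The zero-section contribution to $\MS(K)$ outside $\Lambda_{\wh{\phi}}$ contributes only $\sigma = 0$ to the $T^*I$-direction, which trivially satisfies the same bound. This yields the desired inclusion.

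\textbf{Conclusion via \pref{prp:abisomhtpy}.} Set $f(s) := \max_p H_s(p) \ge 0$ and $g(s) := -\min_p H_s(p) \ge 0$; these are continuous nonnegative functions on $I$ since $H$ has compact support. The microsupport inclusion above is precisely the hypothesis of \pref{prp:abisomhtpy}, so that proposition gives that $(G_0,G_1)$ is
\begin{equation}
	\left(\int_0^1 g(s)\,ds + \varepsilon,\ \int_0^1 f(s)\,ds + \varepsilon\right) = (E_-(H)+\varepsilon,\ E_+(H)+\varepsilon)\text{-interleaved}
\end{equation}
for every $\varepsilon \in \bR_{>0}$. Summing the two components and taking the infimum in the definition of $d_{\cD(M)}$ gives $d_{\cD(M)}(G_0,G_1) \le E_+(H)+E_-(H) = \|H\|$.

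\textbf{Expected main obstacle.} The delicate point is the microsupport computation: verifying the noncharacteristic and properness hypotheses of \pref{prp:SScomp} in the parameterized setting and cleanly separating the contribution of $\Lambda_{\wh{\phi}}$ from the zero-section part of $\MS(K)$ so that both give bounds compatible with $\gamma_H$. The algebraic bookkeeping between the homogenized Hamiltonian $\wh{H}$ and $H$ itself, and the use of $\tau \ge 0$ to turn the inequality $\min H_s \le H_s(\phi^H_s(x;\xi/\tau)) \le \max H_s$ into the claimed bound on $\sigma$, is the one place where signs must be handled carefully.
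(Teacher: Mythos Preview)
Your proposal is correct and follows exactly the same approach as the paper: establish the microsupport inclusion $\MS(G') \subset T^*M \times \{(t,s;\tau,\sigma) \mid -\max_p H_s(p)\cdot\tau \le \sigma \le -\min_p H_s(p)\cdot\tau\}$ via \pref{prp:SScomp} and the explicit form of $\Lambda_{\wh{\phi}}$ in \eqref{eq:deflambdahatphi}, and then apply \pref{prp:abisomhtpy} with $f(s)=\max_p H_s(p)$ and $g(s)=-\min_p H_s(p)$. The paper's proof is in fact just these two sentences; your version simply unpacks the hypotheses of \pref{prp:SScomp} and the zero-section contribution more carefully.
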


\begin{proof}
	By \cref{prp:SScomp} and \eqref{eq:deflambdahatphi}, we get
	\begin{equation}
	\MS(G') \subset T^*M \times \left\{ (t,s;\tau,\sigma) \;\middle|\;  -\max_p H_s(p) \cdot \tau \le \sigma \le -\min_p H_s(p) \cdot \tau \right\}.
	\end{equation}
	Thus the result follows from \cref{prp:abisomhtpy}.
\end{proof}

\subsection{Displacement energy}

In this subsection, we prove a quantitative version of Tamarkin's non-displaceability theorem, which gives a lower bound of the displacement energy.

For compact subsets $A$ and $B$ of $T^*M$, their \emph{displacement energy} $e(A,B)$ is defined by
\begin{equation}
e(A,B)
:=
\inf 
\left\{ 
\| H \| 
\;\middle|\; 
\begin{aligned}
&  \text{$H \colon T^*M \times I \to \bR$ with compact support}, \\
& A \cap \phi_1^H(B) = \emptyset
\end{aligned}
\right\}.
\end{equation}
For a compact subset $A$ of $T^*M$, set $e(A)=e(A,A)$.

We give a sheaf-theoretic lower bound of $e(A,B)$.
For that purpose, we make the following definition.

\begin{definition}\label{def:sheafenergy}
	For $F,G \in \cD(M)$, one defines
	\begin{align}
	e_{\cD(M)}(F,G)
	& :=
	d_{\cD(\pt)}(R{q_\bR}_*\cHom^\star(F,G),0) \\
	\notag & = 
	\inf \{	c \in \bR_{\ge 0} \mid \text{$R{q_\bR}_* \cHom^\star(F,G)$ is $c$-torsion} \}.	
	\end{align}
\end{definition}

Note that by \cref{prp:morD}, for $F,G \in \cD(M)$ we have
\begin{equation}
	e_{\cD(M)}(F,G)
	\ge 
	\inf \{c \in \bR_{\ge 0} \mid \text{$\Hom_{\cD(M)}(F,G) \to \Hom_{\cD(M)}(F,{T_c}_*G)$ is zero} \}.
\end{equation}

\begin{theorem}\label{thm:energy}
	Let $A$ and $B$ be compact subsets of $T^*M$.
	Then, for any $F \in \cD_A(M)$ and $G \in \cD_B(M)$, one has
	\begin{equation}
	e(A,B) \ge e_{\cD(M)}(F,G).
	\end{equation}
	In particular, for any $F \in \cD_A(M)$ and $G \in \cD_B(M)$,
	\begin{equation}\label{eq:enersyestimateHom}
	e(A,B)
	\ge
		\inf \{c \in \bR_{\ge 0} \mid \text{$\Hom_{\cD(M)}(F,G) \to \Hom_{\cD(M)}(F,{T_c}_*G)$ is zero} \}.
	\end{equation}
\end{theorem}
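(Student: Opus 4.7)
The plan is to reduce the estimate to the chain of inequalities sketched after the statement of \pref{thm:intromainenergy} in the introduction. Fix $F \in \cD_A(M)$ and $G \in \cD_B(M)$. It suffices to show that for every compactly supported Hamiltonian function $H \colon T^*M \times I \to \bR$ with $A \cap \phi^H_1(B) = \emptyset$, one has $e_{\cD(M)}(F,G) \le \|H\|$; taking the infimum over such $H$ then yields $e_{\cD(M)}(F,G) \le e(A,B)$. The second assertion about $\Hom$ then follows from the inequality noted after \pref{def:sheafenergy}, which comes from \pref{prp:morD}.

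Given such an $H$, let $K \in \Db(M \times \bR \times M \times \bR \times I)$ be the GKS sheaf quantization of $\phi^H$ and set $\Psi^H_1(G) := K_1 \circ G$. As recalled at the end of \pref{sec:Tamarkin}, $\Psi^H_1$ restricts to a functor $\cD_B(M) \to \cD_{\phi^H_1(B)}(M)$, so $\Psi^H_1(G) \in \cD_{\phi^H_1(B)}(M)$. Since $A \cap \phi^H_1(B) = \emptyset$, Tamarkin's separation theorem (\pref{thm:separation}) gives
\begin{equation}
R{q_\bR}_* \cHom^\star(F, \Psi^H_1(G)) \simeq 0 \quad \text{in } \cD(\mathrm{pt}).
\end{equation}

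Now I chain the distance inequalities. Using the triangle inequality \pref{prp:propertydD}(ii) together with the vanishing above and the observation that $d_{\cD(\pt)}(0, 0) = 0$, I obtain
\begin{equation}
e_{\cD(M)}(F,G) = d_{\cD(\pt)}\bigl(R{q_\bR}_*\cHom^\star(F,G), 0\bigr) \le d_{\cD(\pt)}\bigl(R{q_\bR}_*\cHom^\star(F,G), R{q_\bR}_*\cHom^\star(F, \Psi^H_1(G))\bigr).
\end{equation}
Applying \pref{prp:propertydD}(iv) with $f = a_M \colon M \to \mathrm{pt}$ (so that $R\tl{f}_* = R{q_\bR}_*$, at least after identifying both sides via the canonical representatives in $\Db_{\{\tau \le 0\}}(M \times \bR)^\perp$), this last quantity is at most $d_{\cD(M)}(\cHom^\star(F,G), \cHom^\star(F, \Psi^H_1(G)))$. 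Then by the functoriality of $\cHom^\star$ in the second variable, \pref{prp:propertydD}(iii) gives the bound by $d_{\cD(M)}(G, \Psi^H_1(G))$, and finally \pref{thm:GKShtpy} yields $d_{\cD(M)}(G, \Psi^H_1(G)) \le \|H\|$.

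The main subtlety I anticipate is checking that \pref{prp:propertydD}(iv) legitimately applies to the projection $a_M \times \id_\bR \colon M \times \bR \to \mathrm{pt} \times \bR = \bR$, since $\cHom^\star(F, \Psi^H_1(G))$ lives in the right-orthogonal $\Db_{\{\tau \le 0\}}(M \times \bR)^\perp$ and one must ensure the pushforward stays in the correct orthogonal (cf.\ \pref{rem:pushD}) and that the interleaving morphisms transport correctly. Everything else is a direct application of the structural results already established: \pref{prp:propertydD}, \pref{thm:separation}, and \pref{thm:GKShtpy}.
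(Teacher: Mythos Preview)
Your proof is correct and follows essentially the same route as the paper's own argument: apply Tamarkin's separation theorem to $\Psi^H_1(G)$, then chain \pref{prp:propertydD}(ii), (iv), (iii), and \pref{thm:GKShtpy}. The only cosmetic difference is that the paper writes $G_0, G_1$ for $G, \Psi^H_1(G)$ and compresses the triangle-inequality step slightly; your concern about \pref{prp:propertydD}(iv) is addressed by \pref{rem:pushD}.
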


\begin{proof}
	Suppose that a compactly supported Hamiltonian function $H \colon T^*M \times I \to \bR$ satisfies $A \cap \phi_1^H(B)=\emptyset$.
	Let $K \in \Db(M \times \bR \times M \times \bR \times I)$ be the sheaf quantization associated with $\phi^H$ and define $G':=K \circ G \in \Db(M \times \bR \times I)$ and $G_s:=G'|_{M \times \bR \times \{s\}} \in \cD(M)$ for $s \in I$ as in \cref{thm:GKShtpy}.
	Since $G_1 \in \cD_{\phi^H_1(B)}(M)$, Tamarkin's separation theorem (\cref{thm:separation}) implies $R{q_\bR}_* \cHom^\star(F,G_1) \simeq 0$.
	On the other hand, by \cref{thm:GKShtpy}, we have $d_{\cD(M)}(G_0,G_1) \le \| H \|$.
	Hence, by \cref{prp:propertydD}, we obtain
	\begin{align}
	e_{\cD(M)}(F,G) 
	& = 
	d_{\cD(\pt)}(R{q_\bR}_*\cHom^\star(F,G_0),0) \\
	\notag & \le 
	d_{\cD(M)}(\cHom^\star(F,G_0),\cHom^\star(F,G_1)) \\
	\notag & \le 
	d_{\cD(M)}(G_0,G_1) 
	\le
	\| H \|,
	\end{align}
	which proves the theorem.
\end{proof}

We list some properties of $e_{\cD(M)}$.

\begin{proposition}
	Let $F,G \in \cD(M)$.
	\begin{enumerate}
		\item $e_{\cD(M)}(F,G) \le e_{\cD(M)}(F,F)$ 
		and $e_{\cD(M)}(F,G) \le e_{\cD(M)}(G,G)$.
		\item Assume that $F$ and $G$ are cohomologically constructible as objects in ${}^\perp \Db_{\{\tau \le 0 \}}(M \times \bR) \subset \Db(M \times \bR)$.
		Then 
		\begin{equation}
		    e_{\cD(M)}(F,G)=e_{\cD(M)}(i_*\bD_{M \times \bR} G,i_*\bD_{M \times \bR} F).
		\end{equation}
		\item Assume that there exist compact subsets $A$ and $B$ of $T^*M$ such that $F \in \cD_{A}(M)$ and $G \in \cD_{B}(M)$.
		Let $\phi^H \colon T^*M \times I \to T^*M$ be a Hamiltonian isotopy with compact support and $K \in \Db(M \times \bR \times M \times \bR \times I)$ be the sheaf quantization associated with $\phi^H$.
		Set $F':=K \circ F, G':=K \circ G$ and $F_s:=F'|_{M \times \bR \times \{s\}}, G_s:=G'|_{M \times \bR \times \{s\}}$ for $s \in I$.
		Then \begin{equation}
		    e_{\cD(M)}(F,G)=e_{\cD(M)}(F_s,G_s)
		\end{equation}
		for any $s \in I$.
	\end{enumerate}
\end{proposition}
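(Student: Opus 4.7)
The plan is to transfer the $c$-torsion property of the relevant objects through the functor $\cHom^\star$, biduality, and the autoequivalence induced by sheaf quantization. The following key lemma underlies all three parts: if $F$ or $G$ is $c$-torsion, then $\cHom^\star(F, G)$ (and hence $Rq_{\bR*}\cHom^\star(F, G)$) is $c$-torsion. To see this, the adjunction between $\star$ and $\cHom^\star$, together with the $(T_{c*}, T_{-c*})$-adjunction, gives natural isomorphisms $\cHom^\star(F, T_{c*}G) \simeq T_{c*}\cHom^\star(F, G)$ and $\cHom^\star(T_{c*}F, G) \simeq T_{-c*}\cHom^\star(F, G)$. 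Hence, if $\tau_{0,c}(G) = 0$, functoriality of $\cHom^\star(F, -)$ yields $\tau_{0,c}(\cHom^\star(F, G)) = 0$; the $F$-case is analogous via contravariance.

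For part (i), by \pref{prp:morD}, the condition $e_{\cD(M)}(F, F) \le c$ is equivalent to the vanishing of the morphism $\Hom_{\cD(M)}(F, F) \to \Hom_{\cD(M)}(F, T_{c*}F)$; evaluating at $1_F$ gives $\tau_{0,c}(F) = 0$, i.e., $F$ is $c$-torsion, while the converse follows from the key lemma. Thus $e_{\cD(M)}(F, F) = \inf\{c \mid F \text{ is } c\text{-torsion}\}$, and combining with the key lemma (applied in the second argument) we obtain $e_{\cD(M)}(F, G) \le e_{\cD(M)}(F, F)$; the other inequality follows symmetrically.

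For part (ii), I would establish the stronger pointwise isomorphism $\cHom^\star(F, G) \simeq \cHom^\star(i_* \bD G, i_* \bD F)$ in $\cD(M)$, which directly gives the desired equality of $e_{\cD(M)}$. Starting from $\cHom^\star(F, G) = Rs_*\cRHom(\tilde q_2^{-1} i^{-1} F, \tilde q_1^! G)$ and using biduality $\cRHom(A, B) \simeq \cRHom(\bD B, \bD A)$ (valid since the arguments are cohomologically constructible) together with the standard interchanges $\bD f^! \simeq f^{-1} \bD$ and $\bD f^{-1} \simeq f^! \bD$, one rewrites $\cHom^\star(i_* \bD G, i_* \bD F)$ as $Rs_*\cRHom(\tilde q_1^{-1} i_* F, \tilde q_2^! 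G)$. The swap $\sigma(x, t_1, t_2) = (x, t_2, t_1)$ is an involution satisfying $s \circ \sigma = s$ and $\tilde q_i \circ \sigma = \tilde q_{3-i}$, so pulling back along $\sigma$ converts this last $\cRHom$ into $\cRHom(\tilde q_2^{-1} i^{-1} F, \tilde q_1^! G)$, while $Rs_*$ is insensitive to $\sigma^{-1}$-pullback, matching the two expressions.

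For part (iii), the sheaf quantization defines an autoequivalence $\Psi_s := K_s \circ (-)$ of $\cD(M)$, whose quasi-inverse is the sheaf quantization of the reverse Hamiltonian isotopy. Crucially, $\Psi_s$ commutes with $T_{c*}$ (because $\wh\phi_s$ shifts the $t$-coordinate by $u_s(x;\xi/\tau)$, which is $t$-independent) and with the convolution $\star$ (which acts in the $\bR$-direction, while $K_s$ acts on the $T^*M$-part). A Yoneda argument via
\begin{equation*}
\Hom_{\cD(M)}(H, \cHom^\star(\Psi_s F, \Psi_s G)) \simeq \Hom_{\cD(M)}(\Psi_s(H \star F), \Psi_s G) \simeq \Hom_{\cD(M)}(H, \cHom^\star(F, G))
\end{equation*}
yields $\cHom^\star(F_s, G_s) \simeq \cHom^\star(F, G)$ in $\cD(M)$, and applying $Rq_{\bR*}$ gives the equality of $e_{\cD(M)}$. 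The main obstacle I anticipate is part (iii): pinning down the commutation of $K_s$ with both $T_{c*}$ and $\star$ is plausible from the construction in \cite{GKS} but not explicit in the excerpt, and requires careful verification using the structure of $\wh\phi$. Part (ii) is essentially a formal manipulation with dualities, and part (i) reduces cleanly to the key lemma and \pref{prp:morD}.
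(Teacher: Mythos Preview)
Your arguments for (i) and (ii) are essentially the paper's own. For (ii) the paper also proves the pointwise isomorphism $\cHom^\star(F,G)\simeq\cHom^\star(i_*\bD_{M\times\bR}G,i_*\bD_{M\times\bR}F)$ via biduality; your explicit use of the swap $\sigma$ just makes transparent a step the paper leaves implicit. One small omission: before computing, the paper first checks that $i_*\bD_{M\times\bR}$ carries ${}^\perp\Db_{\{\tau\le 0\}}(M\times\bR)$ into $\Db_{\{\tau\le 0\}}(M\times\bR)^\perp$, so that the right-hand side makes sense in $\cD(M)$; you should include this.

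Part (iii), however, has a genuine gap, exactly where you flagged it. The commutation $\Psi_s(H\star F)\simeq H\star\Psi_s(F)$ fails in general: the convolution $\star$ keeps the $M$-variable on the diagonal (the map $s$ sends $(x,t_1,t_2)$ to $(x,t_1+t_2)$), while $K_s$ genuinely moves points of $M$ via the base flow of $\phi^H_s$. Consequently your target isomorphism $\cHom^\star(F_s,G_s)\simeq\cHom^\star(F,G)$ in $\cD(M)$ is itself false. For instance, if $F=G$ is supported over $\{x_0\}\times\bR$ then so is $\cHom^\star(F,F)$ (the functor $\cHom^\star$ is local in $M$), whereas $\cHom^\star(F_s,F_s)$ is supported over the image of $x_0$ under the flow; these cannot be isomorphic in $\cD(M)$ once the flow is nontrivial on $M$. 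Note also that your argument never uses the compactness hypotheses on $A$ and $B$, which is a warning sign.

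The paper proves only the weaker (but sufficient) statement $R{q_\bR}_*\cHom^\star(F_s,G_s)\simeq R{q_\bR}_*\cHom^\star(F,G)$, and by a completely different route: one forms the family object $R{q_{I\times\bR}}_*\cHom^\star(F',G')$ on $I\times\bR$ (with $\cHom^\star$ taken in $\cD(M\times I)$), uses the explicit description of $\Lambda_{\wh\phi}$ together with \pref{prp:SScomp} and the compactness of $A,B$ to show its microsupport lies in $\{(s,t;0,\tau)\mid\tau\ge 0\}$, and concludes that it is pulled back from $\bR$, hence independent of $s$ upon restriction.
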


\begin{proof}
	(i)
	First note that for any $c \in \bR_{\ge 0}$, 
	we have the following commutative diagram:
	\begin{equation}\label{eq:diagHomstar}
	\begin{aligned}
	\xymatrix{
		& \cHom^\star(F,G) \ar[d] \ar[ld] \ar[rd] \\
		\cHom({T_{-c}}_*F,G) \ar[r]_-{\sim} & 
		{T_c}_* \cHom^\star(F,G) &
		\cHom^\star(F,{T_c}_*G). \ar[l]^-{\sim}
	}
	\end{aligned}
	\end{equation}	
	Assume that the morphism 
	\begin{align}
		\notag \tau_{0,c}(R{q_\bR}_* \cHom^\star(F,F)) \colon R{q_\bR}_* \cHom^\star(F,F) \lto & \ {T_c}_*R{q_\bR}_* \cHom^\star(F,F) \\
	\simeq & \ R{q_\bR}_* \cHom^\star({T_{-c}}_*F,F)
	\end{align}
	is zero.
	Then the induced morphism 
	$\Hom_{\cD(M)}(F,F) \to \Hom_{\cD(M)}({T_{-c}}_*F,F)$ 
	is also zero by \cref{prp:morD}.
	Thus  $\tau_{-c,0}(F)=0$ as the image of $\id_F$ under the morphism.
	By the commutativity of \eqref{eq:diagHomstar}, 
	$\tau_{0,c}(R{q_\bR}_* \cHom^\star(F,G))$ is zero.
	This proves the first inequality.
	The proof for the second one is similar.

	\medskip
	\noindent (ii)
	First, we show that $i_* \bD_{M \times \bR} \colon \Db(M \times \bR) \to \Db(M \times \bR)$ induces a functor $\cD(M) \simeq {}^\perp \Db_{\{\tau \le 0 \}}(M \times \bR) \to \Db_{\{\tau \le 0 \}}(M \times \bR)^\perp \simeq \cD(M)$.	
	Let $F \in {}^\perp \Db_{\{\tau \le 0 \}}(M \times \bR)$ and $S \in \Db_{\{\tau \le 0 \}}(M \times \bR)$.
	Then we have 
	\begin{align}
	\Hom_{\Db(M \times \bR)}(S,i_*\bD_{M \times \bR}F)
	& \simeq 
	\Hom_{\Db(M \times \bR)}(i_*S,\cRHom(F,\omega_{M \times \bR})) \\
	\notag & \simeq 
	\Hom_{\Db(M \times \bR)}(i_*S \otimes F, \omega_{M \times \bR}) \\
	\notag & \simeq 
	\Hom_{\Db(M \times \bR)}(F,\cRHom(i_*S,\omega_{M \times \bR})). 
	\end{align}
	By \cref{thm:operations} and \cref{prp:SStenshom}, $\cRHom(i_*S,\omega_{M \times \bR}) \in \Db_{\{\tau \le 0 \}}(M \times \bR)$.
	Hence $\Hom_{\Db(M \times \bR)}(S,i_*\bD_{M \times \bR}F) \simeq 0$, which implies 
	\begin{equation} 
	    i_*\bD_{M \times \bR}F \in \Db_{\{\tau \le 0 \}}(M \times \bR)^\perp.
	\end{equation}
	
	Now, assume that $F,G \!\in\! {}^\perp \Db_{\{\tau \le 0 \}}(M \!\times\! \bR) $ are cohomologically con\-struct\-ible.
	Then we have 
	\begin{align}
	\cHom^\star(F,G)
	& \simeq
	Rs_* \cRHom(\tilde{q}_2^{-1} i^{-1}F, \tilde{q}_1^!G) \\
	\notag & \simeq 
	Rs_* \cRHom(\bD_{M \times \bR} \tilde{q}_1^!G, \bD_{M \times \bR} \tilde{q}_2^{-1} i^{-1}F) \\
	\notag & \simeq 
	Rs_* \cRHom(\tilde{q}_1^{-1} \bD_{M \times \bR} G, \tilde{q}_2^! i^{-1} \bD_{M \times \bR} F)\\
\notag 	& \simeq 
	\cHom^\star(i_*\bD_{M \times \bR} G, i_* \bD_{M \times \bR} F),
	\end{align}		
	which proves the equality.

	\medskip
	\noindent (iii)
	It is enough to show that $R{q_\bR}_* \cHom^\star(F,G) \simeq R{q_\bR}_* \cHom^\star(F_s,G_s)$ for any $s \in I$.
	For a compact subset $C$ of $T^*M$, define $\mathrm{Cone}_{H}(C) \subset T^*(M \times I) \times \bR$ by
	\begin{equation}
	\mathrm{Cone}_{H}(C) :=
	\overline{
		\left\{
		(x',s;\xi',-\tau \cdot H_s(x';\xi'/\tau), \tau) 
		\; \middle| \;
		\begin{aligned}
		    & \tau>0, (x;\xi/\tau) \in C, \\
		    & (x';\xi'/\tau)=\phi^H_s(x;\xi/\tau)
		\end{aligned}
		\right\}}.
	\end{equation}
	Denote by $\hat{\pi} \colon T^*(M \times I \times \bR) \simeq T^*(M \times I) \times T^*\bR \to T^*(M \times I) \times \bR$ the projection.
	Then, by \cref{prp:SScomp} and \eqref{eq:deflambdahatphi}, we have 
	\begin{equation}
	\MS(F') \subset \hat{\pi}^{-1}(\mathrm{Cone}_{H}(A)), \quad
	\MS(G') \subset \hat{\pi}^{-1}(\mathrm{Cone}_{H}(B)).
	\end{equation}
	Moreover, let $q_{I \times \bR} \colon M \times I \times \bR \to I \times \bR$ be the projection.
	Note that $q_{I \times \bR}$ is proper on $\Supp(\cHom^\star(F',G'))$, where $\cHom^\star$ denotes the internal Hom functor on $\cD(M \times I)$.
	Then, by \cite[Proposition~4.13 and Lemma~4.7]{GS14} and \cref{thm:operations}, we obtain
	\begin{equation}
	\MS(R{q_{I \times \bR}}_*\cHom^\star(F',G'))
	\subset 
	\{ (s,t;0,\tau) \mid \tau \ge 0 \} \subset T^*(I \times \bR).
	\end{equation}
	Since $I$ is contractible, there exists $S \in \Db(\bR)$ such that 
	\begin{equation}
	    R{q_{I \times \bR}}_*\cHom^\star(F',G') \simeq q'^{-1}S,
	\end{equation}
	where $q' \colon I \times \bR \to \bR$ is the projection.
	Finally, by \cite[Corollary~4.15]{GS14}, for any $s \in I$, we have 
	\begin{equation}
	R{q_{I \times \bR}}_*\cHom^\star(F',G')|_{\{s\} \times \bR} 
	\simeq 
	R{q_{\bR}}_*\cHom^\star(F_s,G_s),
	\end{equation}	
	which completes the proof.	
\end{proof}

\begin{remark}
	Assume that $F,G \in \cD(M) \simeq {}^\perp \Db_{\{\tau \le 0 \}}(M \times \bR)$ are constructible and have compact support.
	Then $R{q_\bR}_*\cHom^\star(F,G)$ is also constructible object with compact support and $\MS(R{q_\bR}_*\cHom^\star(F,G)) \subset \{ \tau \ge 0 \}$.
	By the decomposition result for constructible sheaves on $\bR$ due to Guillermou~\cite[Corollary~7.3]{Gu16} (see also \cite[Subsection~1.4]{KS18persistent}), there exist a finite family of half-closed intervals $\{[b_i,d_i) \}_{i \in I}$ and $n_i \in \bZ \ (i \in I)$ such that 
	\begin{equation}
	R{q_\bR}_*\cHom^\star(F,G)
	\simeq 
	\bigoplus_{i \in I} \bfk_{[b_i,d_i)}[n_i].
	\end{equation}
	Using this decomposition, we find that $e_{\cD(M)}(F,G)=\max_{i \in I} (d_i - b_i)$ is the length of the longest barcodes of $R{q_\bR}_*\cHom^\star(F,G)$ in the sense of Kashiwara--Schapira~\cite{KS18persistent}.
\end{remark}

\begin{remark}\label{rem:novikovmod}
	Let $F,G \in \cD(M)$.
	As remarked by Tamarkin~\cite[Section~1]{Tamarkin}, we can associate a module $H(F,G)$ over a Novikov ring $\Lambda_{0,\mathrm{nov}}(\bfk)$ as follows.
	We define
	\begin{equation}
	\Lambda_{0,\mathrm{nov}}(\bfk)
	:=
	\left\{ 
	\sum_{i=1}^\infty c_i T^{\lambda_i} 
	\;\middle| \; 
	c_i \in \bfk, \lambda_i \in \bR_{\ge 0}, \lambda_1 < \lambda_2<\cdots, \lim_{i \to \infty} \lambda_i =+\infty 
	\right\}.
	\end{equation} 
	We also define a submodule $H(F,G)$ of $\prod_{c \in \bR} \Hom_{\cD(M)}(F,{T_c}_*G) $ by
	\begin{equation}
	\left\{ 
	(h_c)_c \in \prod_{c \in \bR} \Hom_{\cD(M)}(F,{T_c}_*G) 
	\;\middle|\;
		\resizebox{.5\textwidth}{!}{$\begin{aligned}
	& \exists \, (c_i)_{i=1}^\infty \subset \bR, c_1<c_2< \cdots, \lim_{i \to \infty} c_i =+\infty \\
	& \text{such that $h_c=0$ for any $c \not\in \bigcup_{i=1}^\infty \{c_i\}$}		
		\end{aligned}$}
	\right\}.
	\end{equation}
	For $c \in \bR$ and $\lambda \in \bR_{\ge 0}$, there is the canonical morphism
	\begin{equation}
	    \tau_{c,c+\lambda} \colon \Hom_{\cD(M)}(F,{T_c}_*G) \to \Hom_{\cD(M)}(F,{T_{c+\lambda}}_*G)
	\end{equation}
	induced by $\tau_{c,c+\lambda}(G) \colon {T_c}_*G \to {T_{c+\lambda}}_*G$.
	Using this morphism, we can equip $H(F,G)$ with an action of $T^\lambda$ by $T^\lambda \cdot (h_c)_c:=(\tau_{c,c+\lambda}(h_c))_c$.
	We thus find that the Novikov ring $\Lambda_{0,\mathrm{nov}}(\bfk)$ acts on $H(F,G)$.
	
	\noindent(i) Using the $\Lambda_{0,\mathrm{nov}}(\bfk)$-module $H(F,G)$, we can express \eqref{eq:enersyestimateHom} in \cref{thm:energy} as
	\begin{equation}
	e(A,B) \ge \inf \{c \in \bR_{\ge 0} \mid \text{$H(F,G)$ is  $T^c$-torsion} \}
	\end{equation}
	for any $F \in \cD_{A}(M)$ and $G \in \cD_{B}(M)$.
	This inequality seems to be related to the estimate of the displacement energy by Fukaya--Oh--Ohta--Ono~\cite[Theorem~J]{FOOO09,FOOO092} and \cite[Theorem~6.1]{FOOO13}.	
	
	\noindent(ii) We denote by $\Lambda_{\mathrm{nov}}(\bfk)$ the fraction field of $\Lambda_{0,\mathrm{nov}}(\bfk)$.
	Then, for any $F, G \in \cD(M)$, we have 
	\begin{equation}
	H(F,G) \otimes_{\Lambda_{0,\mathrm{nov}}(\bfk)} \Lambda_{\mathrm{nov}}(\bfk)
	\simeq 
	\Hom_{\cT(M)}(F,G) \otimes_{\bfk} \Lambda_{\mathrm{nov}}(\bfk)
	\end{equation}
	See \cref{rem:fourmor}(ii) for the category $\cT(M)$.
	Note also that $\cT(M)$ is invariant under Hamiltonian deformations by \cref{thm:GKShtpy} and \cref{rem:fourmor}(ii).
\end{remark}

\subsection{Examples and applications}

In this subsection, we give some examples to which \cref{thm:energy} is applicable. 

The first two examples, \cref{eg:immersionSn} and \cref{eg:immersionSn2}, treat exact Lagrangian immersions. 

\begin{example}\label{eg:immersionSn}
	Consider $T^*\bR^m \simeq \bR^{2m}$ and denote by $(x;\xi)$ the homogeneous symplectic coordinate system.
	Let $L\!=\!S^m\!=\!\{ (x,y) \!\in\! \bR^m \!\times\! \bR \mid\! \|x\|^2 \!+\! y^2=1 \}$ and consider the exact Lagrangian immersion
	\begin{equation}
	\iota \colon L \lto T^*\bR^m,
	\quad
	(x,y) \longmapsto (x;yx).
	\end{equation}
	Setting $f \colon L \to \bR, f(x,y):=-\frac{1}{3} y^3$, we have $df=\iota^* \alpha_{T^*\bR^m}$.
	We define a locally closed subset $Z$ of $\bR^m \times \bR$ by
	\begin{equation}
	Z:=
	\left\{
	(x,t) \in \bR^m \times \bR \; \middle| \;
	\| x \| \le 1, -\frac{1}{3}(1-\| x \|^2)^{\frac{3}{2}} \le t <\frac{1}{3}(1-\| x \|^2)^{\frac{3}{2}}
	\right\}
	\end{equation}
	and $F:= \bfk_Z \in \Db(\bR^m \times \bR)$.
	\begin{figure}[H]
		\begin{minipage}{0.49\hsize}
			\begin{center}
				\begin{tikzpicture}[scale=2]
				\draw[->] (-1.5,0) -- (1.5,0) node[below] {$x$};
				\draw[->] (0,-0.7) -- (0,0.7) node[left]  {$\xi$};
				\draw[thick,-] plot[domain=0:{2*pi}, variable=\t, smooth] ({sin(\t r)},{sin (\t r) * cos(\t r)});
				\path (1,0.33) node[above right] {$\iota(L)$};
				\end{tikzpicture}
				\caption{$\iota(L)$ in the case $m=1$.}
			\end{center}
		\end{minipage}
		\begin{minipage}{0.49\hsize}
			\begin{center}
				\begin{tikzpicture}[scale=2]
				\filldraw [fill=gray!50] [thick,dashed] plot[domain=0:2*pi, variable=\t, smooth] ({sin(\t r)},{0.33* cos(\t r)^3});
				\draw[thick,-] plot[domain=-0.5*pi:0.5*pi, variable=\t, smooth] ({sin(\t r)},{-0.33* cos(\t r)^3});
				\path (0.5,0.25) node[above right] {$Z$};
				\draw[->] (-1.5,0) -- (1.5,0) node[below] {$x$};
				\draw[->] (0,-0.7) -- (0,0.7) node[left]  {$t$};
				\path (0,-0.33) node[below right] {$-\frac{1}{3}$};
				\path (0,0.33) node[above right] {$\frac{1}{3}$};
				\end{tikzpicture}
				\caption{$Z$ in the case $m=1$.}
			\end{center}
		\end{minipage}
	\end{figure}
	\noindent The object $F$ is in ${}^\perp \Db_{\{\tau \le 0\}}(\bR^m \times \bR)$ and can be regarded as an object in $\cD_{\iota(L)}(\bR^m)$.
	For this object $F$, we find that
	\begin{equation}
	\Hom_{\cD(\bR^m)}(F,{T_c}_*F)
	\simeq
	\Hom_{\Db(\bR^m \times \bR)}(F,{T_c}_*F)
	\simeq
	\begin{cases}
	\bfk & \left(0 \le c < \frac{2}{3} \right) \\
	0 & \left(c \ge \frac{2}{3} \right)
	\end{cases}
	\end{equation}
	and the induced morphism $\Hom_{\cD(\bR^m)}(F,F) \to \Hom_{\cD(\bR^m)}(F,{T_c}_*F)$ is the identity for any $0 \le c < 2/3$.
	Hence, we obtain $e(\iota(L)) \ge e_{\cD(\bR^m)}(F,F) \ge 2/3$ by \cref{thm:energy}.
	This is the same estimate as that of Akaho~\cite{Akaho}.
	If $m=1$, it is known that $e(\iota(L))=4/3$ by the use of Hofer-Zehnder capacity.
\end{example}

Using the example above, we can recover the following result of Polterovich~\cite{Polterovich93}, for subsets of cotangent bundles.

\begin{proposition}[{\cite[Corollary~1.6, see also the first remark in p.~360]{Polterovich93}}]
	Let $A$ be a compact subset of $T^*M$ whose interior is non-empty.
	Then its displacement energy is positive: $e(A)>0$.
\end{proposition}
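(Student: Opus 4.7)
The plan is to apply \pref{thm:energy} with $B = A$: it suffices to exhibit an object $F \in \cD_A(M)$ with $e_{\cD(M)}(F, F) > 0$, since then $e(A) = e(A, A) \ge e_{\cD(M)}(F, F) > 0$. The strategy is to import a suitably rescaled copy of the sheaf from \pref{eg:immersionSn} into a coordinate chart contained in $\Int(A)$.

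Pick a point $(x_0, \xi_0) \in \Int(A)$ and a chart $\psi \colon V \simto U \subset M$ around $x_0$ with $V \subset \bR^m$ open. The induced map $T^*\psi \colon T^*V \simto T^*U$ is an exact symplectomorphism. For $\epsilon > 0$ sufficiently small, after an appropriate rescaling and a fiber translation on $T^*\bR^m$, the associated immersion $\iota_\epsilon \colon L \to T^*\bR^m$ has image inside $T^*V$ and, via $T^*\psi$, inside $\Int(A)$. Repeating the construction of \pref{eg:immersionSn} with a primitive adapted to $\iota_\epsilon$ yields a sheaf $F_\epsilon = \bfk_{Z_\epsilon}$ with compact support in $V \times \bR$, and the same calculation as in \pref{eg:immersionSn} provides an explicit constant $c_\epsilon > 0$ with $e_{\cD(\bR^m)}(F_\epsilon, F_\epsilon) \ge c_\epsilon$. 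Extending by zero via the open embedding $\tilde{j} := \psi \times \id_\bR \colon V \times \bR \to M \times \bR$ produces $\tilde{F} := \tilde{j}_! F_\epsilon$. Since $F_\epsilon$ has compact support, $\tilde{j}_! F_\epsilon \simeq R\tilde{j}_* F_\epsilon$; and the right adjointness of $R\tilde{j}_*$ to $\tilde{j}^{-1}$, together with the preservation of microsupport under $\tilde{j}^{-1}$, shows that $\tilde{F}$ remains in ${}^\perp \Db_{\{\tau \le 0\}}(M \times \bR) \simeq \cD(M)$. The microsupport estimate \pref{thm:operations}(i) for the open embedding then gives $\tilde{F} \in \cD_A(M)$.

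The remaining step is to verify $e_{\cD(M)}(\tilde{F}, \tilde{F}) = e_{\cD(\bR^m)}(F_\epsilon, F_\epsilon)$. This reduces to the localization isomorphism $R{q_\bR}_* \cHom^\star(\tilde{F}, \tilde{F}) \simeq R{q_\bR}_* \cHom^\star(F_\epsilon, F_\epsilon)$, which should follow from the adjunctions for the open embedding $\tilde{j}$, the compact support of $F_\epsilon$, and the compatibility of the operations $Rs_*, \tilde{q}_1^!, \tilde{q}_2^{-1}$ with restriction to open submanifolds once all relevant supports are contained there. Combining this identity with \pref{thm:energy}, we conclude that $e(A) \ge e_{\cD(M)}(\tilde{F}, \tilde{F}) \ge c_\epsilon > 0$. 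The main obstacle in the plan is verifying the localization identity: one must carefully track supports through the definition of $\cHom^\star$ and confirm that the computation inside $\cD(M)$ agrees with the one performed inside the chart. By contrast, the geometric input, namely the placement of $\iota_\epsilon(L)$ inside $\Int(A)$ via a chart, is straightforward.
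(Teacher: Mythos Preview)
Your approach is essentially the paper's: plant a small copy of the sphere immersion from \pref{eg:immersionSn} inside $\Int(A)$, build the associated sheaf, extend by zero to $M\times\bR$, and invoke \pref{thm:energy}. The paper differs in one simplifying move you omit: before choosing a chart, it applies a global symplectomorphism of $T^*M$ to arrange that $\Int(A)$ meets the zero section, using only the invariance $e(\psi(A))=e(A)$. After that reduction no fiber translation is needed, and the sheaf is literally a rescaled version of the one in \pref{eg:immersionSn}; the paper then cites monotonicity $e(A)\ge e(\iota_\varepsilon(S^m))$ and the computation of the example. Your route is also fine, but the fiber translation by $\xi_0$ is most cleanly handled by the shear $h\colon(x,t)\mapsto(x,t-\xi_0\cdot x)$ on $\bR^m\times\bR$: it realizes the translation on $\rho$-images of microsupports, commutes with every $T_c$, and hence preserves all the groups $\Hom(F,{T_c}_*F)$ and thus $e_{\cD(\bR^m)}$. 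You should say this rather than asserting that ``the same calculation'' applies.

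As for your flagged obstacle, the localization identity, the paper glosses over it too (``a similar argument to \pref{eg:immersionSn}''). It is indeed routine: since $F_\epsilon$ has compact support in $V\times\bR$, extension by zero coincides with $R\tilde{j}_*$, and for any $c$ one has
\[
\Hom_{\cD(M)}(\tilde F,{T_c}_*\tilde F)\simeq\Hom_{\Db(M\times\bR)}(\tilde F,{T_c}_*\tilde F)\simeq\Hom_{\Db(V\times\bR)}(F_\epsilon,{T_c}_*F_\epsilon)
\]
by the open-embedding adjunction; this already gives the lower bound you need via \eqref{eq:enersyestimateHom}, without computing the full $R{q_\bR}_*\cHom^\star$.
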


\begin{proof}
	Take a symplectic diffeomorphism $\psi \colon T^*M \to T^*M$ such that $T^*_MM \cap \Int(\psi(A)) \neq \emptyset$.
	Since $e(\psi(A))=e(A)$, we may assume $T^*_MM \cap \Int(A) \neq \emptyset$ from the beginning.
	Take a point $x_0 \in T^*_MM \cap \Int(A)$ and a local coordinate system $x=(x_1,\dots,x_m)$ on $M$ around $x_0$.
	Denote by $(x;\xi)$ the associated local homogeneous symplectic coordinate system on $T^*M$.
	Using the coordinates, for $\varepsilon \in \bR_{>0}$ we define $\iota_\varepsilon \colon S^m \to T^*M$ by $(x,y) \mapsto (\varepsilon x, \varepsilon y x)$ as in \cref{eg:immersionSn}.
	Then, there is a sufficiently small $\varepsilon \in \bR_{>0}$ such that the image $\iota_\varepsilon(S^m)$ is contained in $\Int(A)$.
	As in \cref{eg:immersionSn}, we define $F:=\bfk_{Z_\varepsilon} \in \cD_{\iota_\varepsilon(S^m)}(\bR^m)$, where
	\begin{equation}
	Z_\varepsilon:=
	\left\{
	(z,t) \in \bR^m \times \bR \; \middle| \;
	\| z \| \le \varepsilon, -\frac{1}{3\varepsilon}(\varepsilon^2-\| z \|^2)^{\frac{3}{2}} \le t <\frac{1}{3\varepsilon}(\varepsilon^2-\| z \|^2)^{\frac{3}{2}}
	\right\}.
	\end{equation}
	Moreover we define $G \in \cD_{\iota_\varepsilon(S^m)}(M)$ as the zero extension of $F$ to $M \times \bR$.
	By monotonicity of the displacement energy and a similar argument to \cref{eg:immersionSn}, we have
	\begin{equation}
	e(A) \ge e(\iota_\varepsilon(S^m)) \ge e_{\cD(M)}(G,G) \ge \frac{2}{3}\varepsilon^2 >0.
	\end{equation}
\end{proof}

For the next explicit example, our estimate is better than Akaho's estimate~\cite{Akaho}. 

\begin{example}\label{eg:immersionSn2}
	Let $\varphi \colon [0,1] \to (0,1]$ be a $C^\infty$-function satisfying the following two conditions:
	(1) $\varphi \equiv 1$ near $0$,
	(2) $\varphi(r)=r$ on $[1/2,1]$.
	Set $S^m=\{ (x,y) \in \bR^m \times \bR \mid \|x\|^2+y^2=1 \}$ and consider the exact Lagrangian immersion
	\begin{equation}
	\iota \colon
	S^m \lto T^*\bR^m,
	\quad
	(x,y) \longmapsto
	\left(x, \left(\varphi(\|x\|)y - \frac{\varphi'(\|x\|)}{3\|x\|} y^3 \right) \cdot x \right). 
	\end{equation}
	Setting $f \colon S^m \to \bR, f(x,y):=-\frac{1}{3} \varphi(\|x\|) y^3$, we have $df=\iota^* \alpha_{T^*\bR^m}$.
	We define a locally closed subset $Z$ of $\bR^m \times \bR$ by
	\begin{equation}
	Z:=
	\left\{
	(x,t) \in \bR^m \times \bR \; \middle| \;
	\resizebox{.5\textwidth}{!}{$
	\begin{aligned}
		& \| x \| \le 1, \\
		& -\frac{1}{3} \varphi(\|x\|) (1-\| x \|^2)^{\frac{3}{2}} \le t <\frac{1}{3} \varphi(\|x\|)(1-\| x \|^2)^{\frac{3}{2}}
	\end{aligned}$}
	\right\}
	\end{equation}
	and $F:= \bfk_Z \in \Db(\bR^m \times \bR)$.
	Using the object $F$, one can show $e(\iota(S^m)) \ge e_{\cD(\bR^m)}(F,F) \ge 2/3$ as in \cref{eg:immersionSn}.
	On the other hand, the estimate by Akaho~\cite{Akaho} only gives 
	$e(\iota(S^m)) \ge \min_{r\in [0,\frac{1}{2}]} \{\frac{2}{3} (1-r^2)^\frac{3}{2}\cdot \varphi (r) \}$, which is less than $\sqrt{3}/8$. 
\end{example}

Our theorem is also applicable to non-exact Lagrangian submanifolds. 
We focus on graphs of closed 1-forms here. 

\begin{example}\label{eg:closedoneform}
	Let $M$ be a compact manifold and $\eta_i \colon M \to T^*M$ a closed 1-form for $i=1,2$.
	Set $L_i:=\Gamma_{\eta_i} \subset T^*M$ the graph of $\eta_i$ for $i=1,2$, and assume that $L_1$ and $L_2$ intersect transversally.
	We consider the displacement energy $e(L_1,L_2)$.
	The symplectic diffeomorphism $\psi$ on $T^*M$ defined by $\psi(x;\xi) :=(x;\xi-{\eta_1}(x))$ sends $L_1$ to the zero-section $M$ and $L_2$ to $\Gamma_{\eta_2-\eta_1}$. 
	Thus we assume $L_1=M$ and $L_2=\Gamma_{\eta}$, where $\eta$ is a closed Morse 1-form from the beginning.
	Let $p \colon \tl{M} \to M$ be the abelian covering of $M$ corresponding to the kernel of the pairing with $\eta$.
	Then there exists a function $f \colon \tl{M} \to \bR$ such that $p^*\eta = df$.
	By assumption, $f$ is a Morse function on $\tl{M}$.
	Define a closed subset $Z$ of $\tl{M} \times \bR$ by
	\begin{equation}
	Z
	:=
	\{ (x,t) \in \tl{M} \times \bR \mid f(x)+t \ge 0 \}.
	\end{equation}
	Then we have 
	\begin{equation}
	    F:=R(p \times \id_\bR)_* \bfk_{Z} \in \cD_{L}(M)\quad \text{and}\quad 
	    e(L_1,L_2)\geq e_{\cD(M)}(\bfk_{M \times[0,+\infty)},F)
	\end{equation}
	by \cref{thm:energy}.
	
	Let us consider the estimate for $e_{\cD(M)}(\bfk_{M \times[0,+\infty)},F)$. 
	First, we have
	\begin{align}
	\RHom(\bfk_{M \times [0,+\infty)}, {T_c}_*F)
	& \simeq
	\RHom(\bfk_{\tl{M} \times [-c,+\infty)}, \bfk_{Z}) \\
	\notag & \simeq
	\RG_{\tl M \times [-c,+\infty)}\left( \tl M \times \bR;\bfk_{Z} \right).
	\end{align}
	Define $U_c:= \{ x \in \tl M \mid f(x)> c\} $ for $c\in \bR$. 
	Then the cohomology of the last complex $\RG_{\tl M \times [-c,+\infty)}\left( \tl M \times \bR;\bfk_Z \right)$ is isomorphic to $H^\ast (\tl M, U_c)$ and for $c\le d$, $\tau_{c,d}$ is the canonical morphism induced by the map $(\tl M, U_d)\to (\tl M, U_c)$ of the pairs. 
	Hence this persistence module is isomorphic to $(H^*(\tl M, U_c))_{c\in \bR}$ and it is the dual of the persistence module $(H_*(\tl M, U_c))_{c\in \bR}$. 
	The persistence module $(H_*(\tl M, U_c))_{c\in \bR}$ can be studied by Morse homology theory of $-f$ or  Morse-Novikov theory of $-\eta$. 
	Let $v$ be a vector field on $M$ which is a $(-\eta )$-gradient and satisfies the transversality condition in the sense of  Pajitnov~\cite[Chapter~3 and Chapter~4]{Pajitnov}. 
	The existence and denseness of such vector fields hold (see Pajitnov~\cite[Chapter~4]{Pajitnov}).
	Moreover let $\tl v$ be the lift of $v$ to $\tl M$. 
	The Morse-Novikov complex $C:=C(-\eta, v)$ with respect to $\tl v$ has the filtration $(C_{\le c})_{c\in \bR}$ defined by the values of $-f$.  Here we regard $C$ as a finitely generated free module over the Novikov field 
	\begin{equation}
	\left\{ 
	\sum_{i=1}^\infty c_i T^{\lambda_i} 
	\;\middle| \; 
	\begin{aligned}
	& c_i\in \bfk, \text{$\lambda_i =\int_\gamma \eta$ for some $\gamma \in H_1(M;\bZ)$},\\
	& \lambda_1 < \lambda_2<\cdots, \lim_{i \to \infty} \lambda_i =+\infty 
	\end{aligned}
	\right\}.
	\end{equation} 
	The persistence module $(H_*(C/C_{\le c}))_{c\in \bR}$ is isomorphic to $(H_*(\tl M, U_c))_{c\in \bR}$ by usual Morse theoretic arguments. 
	Each critical point generates or kills rank $1$ subspace of the persistent homology. 
	Hence one can prove that our estimate is greater than or equal to
	\begin{equation}
	\max_p \min_q \left\{ |f(p)-f(q)| 
	\;\middle|\; 
	\begin{aligned}
	& p, q \in \mathrm{Crit}(-f), 
	|\mathrm{ind} (p)-\mathrm{ind} (q)|=1, \\
	& \text{there is a flow of $\tl v$ connecting $p$ and $q$}  
	\end{aligned}
	\right\}, 
	\end{equation}
	where $\mathrm{Crit}(-f)$ is the set of the critical points of $-f$ 
	and $\mathrm{ind}(p)$ is the Morse index of $p \in \mathrm{Crit}(-f)$. 
	
	The persistence module $(H_*(C/C_{\le c}))_{c\in \bR}$ is not finitely generated in the usual sense of persistent homology theory. 
	However we can apply the theory of Usher--Zhang~\cite{UZ16} to $C$.  
	Their result describes the ``barcodes" of the persistence module $(H_*(C_{\le c}))_{c}$ and 
	one can check that our estimate in this case 
	coincides with the length of the longest concise barcodes for  $C(-\eta, v)$ defined in \cite{UZ16}. 
\end{example}

In the last example below, our estimate determines the displacement energy. 

\begin{example}[Special case of \cref{eg:closedoneform}]
	Let $L=\Gamma_\eta \subset T^*S^1$ be the graph of a non-exact 1-form $\eta \colon S^1 \to T^*S^1$.
	Assume that $L$ and the zero-section $S^1$ intersect transversally at only two points. 
	We estimate the displacement energy $e(S^1, L)$. 
	Let $p \colon \bR \to S^1$ be the universal covering and take a function $f$ on $\bR$ such that $df=p^*\eta$.
	Define $F:=R(p \times \id_\bR)_* \bfk_{\{(x,t) \in \bR \times \bR \mid f(x)+t \ge 0\}} \in \cD_{L}(S^1)$.
	Then a similar argument to Example \ref{eg:closedoneform} shows that $e_{\cD(S^1)}(\bfk_{S^1 \times [0,+\infty)},F)$ is equal to the smaller area enclosed by $S^1$ and $L$. 
	One can check that $e(S^1, L)$ is equal to the area. 
\end{example}

\newcommand{\etalchar}[1]{$^{#1}$}
\def\cprime{$'$}

\end{document}